\documentclass[12pt,a4paper]{article}
\usepackage{amsmath}
\usepackage{amssymb}
\usepackage{amsthm}
\usepackage{amsfonts}
\usepackage{latexsym}

%---------------------THEOREMS-----------------------%
\theoremstyle{plain}
\newtheorem{thm}{Theorem}[section]
\newtheorem{cor}{Corollary}[section]
\newtheorem{lem}{Lemma}[section]
\newtheorem{prop}{Proposition}[section]

\newtheorem{claim}{Claim}[section]
\theoremstyle{remark}

\theoremstyle{definition}
\newtheorem{defn}{Definition}[section]

\newtheorem{exmp}{Example}[section]

\newtheorem{rem}{Remark}[section]

\hyphenation{ma-ni-fold ma-ni-folds re-pre-sen-ta-tion ope-ra-tor
sa-ti-sfy-ing re-pre-sen-ta-tions mul-ti-pli-ci-ties va-lu-ed
com-pa-ti-ble po-la-ri-za-tion par-ti-cu-lar sti-mu-la-ting tri-vial
dif-fe-ren-tial va-ni-shing me-ta-li-near na-tu-ral-ly
e-qui-va-len-tly ge-ne-ra-li-ty na-tu-ral fa-mi-ly geo-me-tric
uni-ta-ri-ly e-qui-va-rian-tly li-nea-ri-za-tion dia-go-nal geo-me-try nor-ma-li-zed
e-xi-sten-ce or-tho-go-nal}
\title{Equivariant Asymptotics of Szeg\"{o} kernels under Hamiltonian $SU(2)$-actions}
\author{Andrea Galasso and Roberto Paoletti\footnote{\noindent{\bf Address:}
Dipartimento di Matematica e Applicazioni, Universit\`a degli Studi
di Milano Bicocca, Via R. Cozzi 55, 20125 Milano,
Italy; {\bf e-mail}: andrea.galasso@unimib.it, roberto.paoletti@unimib.it }}
\date{}

\begin{document}
\maketitle

\begin{abstract}
 Let $M$ be  complex projective manifold, and $A$ a positive line bundle on it. 
Assume that $SU(2)$ acts on $M$ in a Hamiltonian manner, with nowhere vanishing moment map,
and that this action linearizes
to $A$. Then there is an associated unitary representation of $G$ on the associated algebro-geometric
Hardy space, and the isotypical components are all finite dimensional. We consider the local and global asymptotic
properties the equivariant projector associated to a weight $k \, \boldsymbol{ \nu }$, when $\boldsymbol{ \nu }$ is fixed
and $k\rightarrow 
+\infty$.   
\end{abstract}

\section{Introduction}

Let $M$ be a connected complex $d$-dimensional projective manifold, 
and let $\omega$ be a  Hodge form on it. Thus $M$ has a natural choice
of a volume form, given by $\mathrm{d}V_M:=(1/d!)\,\omega^{\wedge d}$.

Suppose given, in addition, an action
$\mu :G\times M\rightarrow M$ of a compact and connected Lie group $G$,
which is holomorphic (meaning that each diffeomorphism 
$\mu_g:M\rightarrow M$, $g\in G$, is holomorphic), and Hamiltonian with respect
to $2\,\omega$, with moment map
$\Phi:M\rightarrow \mathfrak{g}^\vee$ ($\mathfrak{g}$ being of course the Lie algebra of $G$).

Let $(A,h)$ be a positive line bundle on $M$, such that unique compatible connection
on $A$ has curvature form $-2\pi\,\imath\,\omega$; let $A^\vee$ be the dual line bundle, 
and $X\subset A^\vee$ the unit circle bundle, with projection $p:X\rightarrow M$. Then $X$ is naturally a contact and CR manifold
by positivity of $A$; if $\alpha$ is the contact form, $X$ inherits the volume form
$\mathrm{d}V_X:=(2\pi)^{-1}\,\alpha\wedge p^*(\mathrm{d}V_M)$.

Furthermore, $X$ has a natural Riemannian structure $g_X$. The latter is uniquely determined by the following conditions: 
1): the vector sub-bundles $\mathcal{V}(X/M):=\ker (\mathrm{d}\pi), \,\mathcal{H}(X/M):= \ker(\alpha) \subset TX$
are mutually orthogonal; 2): $p:X\rightarrow M$ is a Riemannian submersion; 3): $S^1$ (under the standard action)
acts on $X$ by isometries; 4): the fibers of $p$ have unit length.

We shall henceforth identify
densities and half-densities on $X$, and accordingly use the abridged notation $L^2(X)$
for the space of square summable half-densities on $X$.

The Hamiltonian action $\mu$ naturally induces an infinitesimal contact action
of $\mathfrak{g}$ on $X$ \cite{k}; explicitly, if $\xi\in \mathfrak{g}$ and $\xi_M$ is the
corresponding Hamiltonian vector field on $M$, then its contact lift $\xi_X$ is as follows.
Let $\upsilon^\sharp$ denote the horizontal lift on $X$ of a vector field $\upsilon$ on $M$,
and denote by $\partial_\theta$ the generator of the structure circle action on $X$. Then
\begin{equation}
 \label{eqn:contact vector field}
\xi_X:= \xi_M^\sharp-\langle \Phi_G\circ p, \xi\rangle \,\partial_\theta.
\end{equation}
We shall make the stronger assumption that $\mu$ lifts  
to a contact action
$\widetilde{\mu}:G\times X\rightarrow X$ lifting $\mu$, and 
preserving the CR structure (in other words, $\mu$ linearizes to a metric
preserving action on $A$). 

Under these assumptions, there is a naturally induced unitary representation of $G$ on the Hardy space
$H(X)\subset L^2(X)$; hence $H(X)$ can be equivariantly decomposed over the irreducible representations of $G$:
\begin{equation}
 \label{eqn:decomposition of H(X)}
H(X)=\bigoplus _{\boldsymbol{\nu}\in \widehat{G}} H(X)_{\boldsymbol{\nu}},
\end{equation}
where $\widehat{G}$ is the collection of all irreducible representations of $G$.
As is well-known, if $\Phi (m)\neq 0$ for every $m\in M$, then each isotypical component $H(X)_{\boldsymbol{\nu}}$
is finite dimensional (see e.g. \S 2 of \cite{pao-IJM}).

For example, suppose that $G=S^1$ and $\mu$ is trivial, with moment map $\Phi=1$. The irreducible representations of
$S^1$ are indexed by the integers $k\in \mathbb{Z}$, and the isotypical component $H(X)_k$ may
be naturally and unitarily identified with the space $H^0\left(M,A^{\otimes k}\right)$ of global holomorphic sections
of $A^{\otimes k}$. Hence the spaces $H(X)_{\boldsymbol{\nu}}$ in (\ref{eqn:decomposition of H(X)}) 
may be viewed as an analogue of the usual algebro-geometric
notion of linear series, although they may \textit{not}, in general, 
be interpreted as spaces of sections of some power of $A$. It is then natural to study their global and local
properties and their geometric consequences, in analogy with the classical case.
Here the paradigm is offered by the TYZ asymptotic expansion and its near-diagonal extension; we
shall work in the general conceptual framework of \cite{boutet-guillemin} and \cite{guillemin-sternberg hq}, 
and adopt more specifically the approach developed in
\cite{z}, \cite{bsz} and \cite{sz}, which is based on the description of
$\Pi$ as an FIO in \cite{bs}.

Hence, as discussed in the introductions of \cite{pao-IJM} and \cite{gp}, the present perspective departs
form the setting of Berezin-Toeplitz quantization; rather, it may be considered a variant of it,
where the structure $S^1$-action on $X$ is replaced by the contact lift of a general Hamiltonian action 
of a compact Lie group on $M$ (see for instance \cite{charles}, \cite{mm}, \cite{mz},
\cite{schlichenmaier} and references therein for a discussion of Berezin-Toeplitz quantization
and different approaches to near-diagonal kernel asymptotics).

Let $\Pi_{\boldsymbol{\nu}}:L^2(X)\rightarrow H(X)_{\boldsymbol{\nu}}$ be the
orthogonal projector (the $\boldsymbol{\nu}$-equivariant Szeg\"{o} projector); 
if $H(X)_{\boldsymbol{\nu}}$ is finite dimensional,
the corresponding distributional kernel is smooth, 
$\Pi_{\boldsymbol{\nu}}(\cdot,\cdot)\in \mathcal{C}^\infty(X\times X)$ (the $\boldsymbol{\nu}$-equivariant Szeg\"{o} kernel).
We are generally interested in the asymptotic properties of the kernels $\Pi_{\boldsymbol{\nu}}(\cdot,\cdot)$
when $\boldsymbol{\nu}$ tends to infinity in weight space, and in finding analogues of the TYZ-asymptotic expansion and of
the near-diagonal scaling asymptotics which have been the object of considerable attention in the standard case $G=S^1$, $\Phi=1$.
The case where $G$ is a torus has been considered in \cite{pao-IJM}, \cite{pao-loa}, \cite{camosso}; in \cite{gp}, we have focused on
the case $G=U(2)$.

Here, we shall consider the case $G=SU(2)$. We shall henceforth write $G=SU(2)$ and 
$\mathfrak{g}=\mathfrak{su}(2)$ (the Lie algebra of $2\times 2$ traceless skew-Hermitian matrices).

The irreducible representations of $G$ are indexed by the integers $\nu> 0$. To emphasize the
difference with the case of $S^1$ without burdening the notation, we shall label 
the representations by the pairs $\boldsymbol{\nu}=(\nu,0)$, and denote them by $V_{\boldsymbol{\nu}}$.
As is well-known (see for instance \cite{var}), $V_{\boldsymbol{\nu}}=\mathrm{Sym}^{\nu-1}\left(\mathbb{C}^2\right)$, and  
the restriction to the standard torus
$T\leqslant G$ of the corresponding character $\chi_{\boldsymbol{\nu}}$ is
\begin{eqnarray}
\label{eqn:character chinu}
 \chi_{\boldsymbol{\nu}}\left(
\begin{pmatrix}
 e^{\imath\,\vartheta} & 0 \\
0 & e^{-\imath\,\vartheta}
\end{pmatrix}
\right)
&=&\frac{e^{\imath\,\nu\,\vartheta}-e^{-\imath\,\nu\,\vartheta}}{e^{\imath\,\vartheta}-e^{-\imath\,\vartheta}}\\
&=&e^{\imath\,(\nu-1)\,\vartheta}+e^{\imath\,(\nu-3)\,\vartheta}+\ldots+e^{-\imath\,(\nu-1)\,\vartheta}.\nonumber
\end{eqnarray}

We shall fix $\boldsymbol{\nu}$, and consider the pointwise asymptotics of $\Pi_{k\boldsymbol{\nu}}(\cdot,\cdot)$ 
when $k\rightarrow +\infty$. To begin with, we shall show that $\Pi_{k\boldsymbol{\nu}}(x,y)$ is rapidly decreasing,
unless $y\rightarrow G\cdot x$ (the orbit of $x$) at a sufficiently fast pace.

\begin{thm}
\label{thm:rapid decrease}
 Let us fix $C,\,\epsilon>0$. Then, uniformly for $(x,y)\in X\times X$ satisfying
$$
\mathrm{dist}_X\big(x, G\cdot y\big) \ge C\, k^{\epsilon -1/2},
$$
we have 
$\Pi_{k\boldsymbol{\nu}}(x,y)=O\left(k^{-\infty}\right)$.
\end{thm}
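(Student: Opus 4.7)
The strategy is to combine the character projection formula with the Boutet--Sj\"ostrand microlocal description of the Szeg\"o kernel, and to exploit the explicit form of the $SU(2)$ characters on the maximal torus to produce rapid oscillation. Writing $d_{k\boldsymbol{\nu}}=\dim V_{k\boldsymbol{\nu}}=k\nu$, one starts from
\[
\Pi_{k\boldsymbol{\nu}}(x,y)\;=\;d_{k\boldsymbol{\nu}}\int_G\overline{\chi_{k\boldsymbol{\nu}}(g)}\,\Pi\bigl(\widetilde{\mu}_{g^{-1}}(x),y\bigr)\,dg,
\]
and, since $\chi_{k\boldsymbol{\nu}}$ is a class function, applies the generalized Weyl integration formula to rewrite the $G$-integral as an iterated integral over $G/T\times T$. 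On $T$ the character becomes $\chi_{k\boldsymbol{\nu}}(t_\vartheta)=\sin(k\nu\vartheta)/\sin\vartheta$ by (\ref{eqn:character chinu}); when multiplied by the Weyl denominator $|\Delta(t_\vartheta)|^2=4\sin^2\vartheta$ it yields a finite sum of exponentials $e^{\pm i(k\nu\pm 1)\vartheta}$, each carrying the large phase $k\nu\vartheta$ that will drive the argument.

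Next I would insert the Boutet--Sj\"ostrand representation
\[
\Pi(x',y)\;=\;\int_0^{+\infty} e^{iu\,\psi(x',y)}\,s(u,x',y)\,du,
\]
with $\mathrm{Im}\,\psi\ge c\,\mathrm{dist}_X(x',y)^2$ and $s$ a classical symbol in $u$, and rescale $u=k\tau$. The resulting oscillatory integral in $(\vartheta,\tau)$ carries the phase $k\,\Psi_\pm$, where
\[
\Psi_\pm(\vartheta,\tau;h)\;=\;\pm\nu\vartheta+\tau\,\psi\bigl(\widetilde{\mu}_{ht_\vartheta^{-1}h^{-1}}(x),y\bigr),
\]
and an amplitude that, together with the prefactors $d_{k\boldsymbol{\nu}}\cdot k$, is polynomially bounded in $k$. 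Because the contact action is by isometries, the hypothesis $\mathrm{dist}_X(x,G\cdot y)\ge Ck^{\epsilon-1/2}$ transfers to a uniform lower bound $\mathrm{dist}_X\bigl(\widetilde{\mu}_g(x),y\bigr)\ge Ck^{\epsilon-1/2}$ for every $g\in G$, hence $\mathrm{Im}\,\psi\ge c'C^2 k^{2\epsilon-1}$ uniformly in $(\vartheta,h)$.

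To conclude I would split the $\tau$-integral at a small fixed $\tau_0>0$. On $\{\tau\ge\tau_0\}$, the damping $|e^{ik\tau\psi}|\le e^{-c'C^2\tau_0\,k^{2\epsilon}}$ is already $O(k^{-\infty})$ and beats any polynomial factor. On $\{0\le\tau\le\tau_0\}$, one has $\partial_\vartheta\Psi_\pm=\pm\nu+\tau\,\partial_\vartheta\psi$, which, for $\tau_0\,\|\partial_\vartheta\psi\|_\infty<\nu/2$, is bounded in absolute value below by $\nu/2$ uniformly in the other variables; iterated non-stationary phase integration by parts in $\vartheta$ with the operator $L=(ik\partial_\vartheta\Psi_\pm)^{-1}\partial_\vartheta$ then produces, after $N$ steps, a gain of $(k\nu)^{-N}$ at the expense of polynomially bounded derivatives of the amplitude. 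Choosing $N$ large enough yields $O(k^{-\infty})$.

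The main obstacle I expect is not the principle of the argument but its \emph{uniformity}: one has to check that derivatives of $\psi$ and of $s(u,\cdot)$ in the auxiliary variables are controlled by constants independent of $(x,y,h)$; that the lower bounds $|\partial_\vartheta\Psi_\pm|\ge\nu/2$ and $\mathrm{Im}\,\psi\ge c\,\mathrm{dist}_X^2$ persist in the small-distance regime where $y$ lies within a shrinking neighbourhood of $G\cdot x$; and that the factor $1/\sin\vartheta$ in the character causes no singularity — the rewriting of $\chi_{k\boldsymbol{\nu}}(t_\vartheta)|\Delta(t_\vartheta)|^2$ as a sum of four plain exponentials is precisely what handles this last point.
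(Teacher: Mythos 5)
Your strategy (character projection + Boutet--Sj\"ostrand FIO + oscillatory-integral estimates) shares the same ingredients as the paper's, but the route through them is genuinely different. The paper first invokes Proposition~\ref{prop:compact support u} (a lemma proved in advance, itself using the Weyl formula, matrix elements and an eigenvalue decomposition to treat the three regimes $u\gg 0$ near and away from $\pm I_2$, and $u\ll 1$) to reduce the (rescaled) dilation integral to a fixed compact interval $[1/D, D]$, after which the theorem follows in one stroke: Corollary 1.3 of \cite{bs} gives $|\psi|\ge\Im\psi\ge C_1 k^{2\epsilon-1}$ under the hypothesis, so the phase $k u\psi$ has $u$-derivative $k\psi$ of modulus $\gtrsim k^{2\epsilon}$, and iterated integration by parts in $u$ over a compact domain kills the $O(k^{d+2})$ amplitude. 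You instead split the $\tau$-integral at a fixed $\tau_0$ and use the $\Im\psi$-bound in a complementary way: for $\tau\ge\tau_0$ the exponential damping $e^{-k\tau\Im\psi}\le e^{-cC^2\tau_0 k^{2\epsilon}}$ directly absorbs the unbounded tail (something only available here because of the distance hypothesis, not for the diagonal Theorems~\ref{thm:G_xnotinZx} and~\ref{thm:rescaled asymptotics}), while for $\tau\le\tau_0$ you integrate by parts in $\vartheta$ using $|\partial_\vartheta\Psi_\pm|\ge\nu/2$ --- this last piece is essentially Case~2 of the proof of Proposition~\ref{prop:compact support u} (Lemma~\ref{lem:Pik12prime}). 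Net, your version bypasses the large-$u$ cases of Proposition~\ref{prop:compact support u} at the cost of a two-case argument; the paper's version reuses the proposition, which it needs anyway for the later theorems, and is then shorter.

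There is, however, one genuine gap: you insert the Boutet--Sj\"ostrand oscillatory-integral representation of $\Pi$ into the $G$-integral without first localizing $g$ to the set where $\widetilde{\mu}_{g^{-1}}(x)$ lies near $y$. The phase $\psi$, its derivatives, and the bound $\Im\psi\ge c\,\mathrm{dist}_X^2$ are only defined and valid in a neighborhood of the diagonal in $X\times X$; off-diagonal, $\Pi$ is smoothing but has no such FIO representation. As written, every step of your estimate (the lower bound on $\Im\psi$, the bound on $\partial_\vartheta\psi$, the very meaning of $e^{iu\psi}$) presupposes that $\widetilde{\mu}_{g^{-1}}(x)$ is close to $y$, which need not hold for all $g\in G$ under the hypothesis $\mathrm{dist}_X(x, G\cdot y)\ge Ck^{\epsilon-1/2}$. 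The fix is the paper's Proposition~\ref{prop:localization near G_x}: insert a cut-off $\varrho(g)$ supported where $\mathrm{dist}_X(\widetilde{\mu}_{g^{-1}}(x), y)<2\delta$, and show that the complementary piece, being a smooth class function on $G$ paired with $\overline{\chi_{k\boldsymbol{\nu}}}$, is already $O(k^{-\infty})$ by Fourier decay (equivalently by the same $\vartheta$-integration by parts you propose, applied now to the smooth kernel rather than to the FIO amplitude). Your closing paragraph gestures at a uniformity concern in the small-distance regime, but the more basic point is that the FIO representation does not apply at all in the far-from-diagonal region of the $g$-integral, and a decomposition into near- and far-diagonal pieces is mandatory before the rest of your argument can run.
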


Let us consider the asymptotics of $\Pi_{k\boldsymbol{\nu}}$ near a point $x\in X$. To build-up to our next Theorems,
we need to introduce some more terminology. 

If $x\in X$, we shall set $m_x:=p(x)$.

For $m\in M$, $\Phi_G(m)\in \mathfrak{g}$ is traceless skew-Hermitian $2\times 2$ matrix. 

\begin{defn} 
\label{defn:coset}
Suppose $m\in M$ and 
$\Phi_G(m)\neq 0$. Then
\begin{enumerate}
 \item $\lambda (m)>0$ will denote the (unique) positive eigenvalue of 
$-\imath\, \Phi_G(m)$; 
\item $h_m\,T\in G/T$ will denote the unique coset such that 
\begin{equation}
 \label{eqn:defn of hm}
\Phi_G(m) =\imath\, h_m
\begin{pmatrix}
 \lambda (m)& 0 \\
0 & -\lambda (m)
\end{pmatrix}\, h_m^{-1};
\end{equation}
\item we shall set, for $\nu\in \mathbb{N}$,
\begin{equation*}
 u_0(\nu,m) := \frac{\nu}{2 \, \lambda (m)}.
\end{equation*}
If $x\in X$, we shall generally write  
$u_0(\nu,x)$ for $u_0(\nu,m_x)$.
\end{enumerate}

\end{defn}

The assignments $\lambda:M\rightarrow (0,+\infty)$ and $m\in M\mapsto h_m\,T\in G/T$ are $\mathcal{C}^\infty$,
provided of course that $\Phi_G(m)\neq 0$ for every $m\in M$.

\begin{rem}
\label{rem:positive eigenvalue}
The positive eigenvalue $\lambda(m)$ has a symplectic interpretation, being
closely related to the moment map for the action
restricted to a suitable torus $T_m\leqslant G$. Let us set
$$
\beta:=
\begin{pmatrix}
 \imath & 0\\
0 &-\imath
\end{pmatrix};
$$
thus $\beta$ is the infinitesimal generator of the standard torus $T$, and 
therefore $\mathrm{Ad}_{h_m} (\beta)$ is the infinitesimal generator of the torus
$T_m:=C_{h_m}(T)$ (here $C_g(h):=g\,h\,g^{-1}$, for all $g,\,h\in G$).
Then for any $m\in M$ we have
\begin{equation}
 \label{eqn:key equality}
2\,\lambda (m) =\left\langle h_m^{-1} \,\Phi_G(m)\, h_m, \beta\right\rangle= 
\big\langle \Phi_G(m) , \mathrm{Ad}_{h_m} (\beta) \big\rangle.
\end{equation}
\end{rem}

Let us denote by $G_x\leqslant G$ the stabilizer of $x\in X$. 
By equivariance of $\Phi_G$, $G_x$ stabilizes $\Phi_G(m)$. 
By (\ref{eqn:defn of hm}), 
$G_x\subset h_{m_x}\, T\, h_{m_x}^{-1} $. 

In particular, 
if $\widetilde{\mu}$ is locally free at $x$, then 
$G_x$ is finite and Abelian. There exist in this case
$e^{\imath\vartheta_j}\in S^1$, $j=1,\ldots, N_x$, such that 
\begin{equation}
 \label{eqn:stabilizer Gx}
G_x=\left\{h_{m_x} \, 
\begin{pmatrix}
 e^{\imath\vartheta_j} & 0 \\
0 & e^{-\imath\vartheta_j}
\end{pmatrix}
\,h_{m_x}^{-1}\,:\,j=1,\ldots,N_x \right\}.
\end{equation}
We shall set, for every $j=1,\ldots,N_x$,
\begin{equation}
 \label{eqn:defn di t_j}
t_j:=\begin{pmatrix}
 e^{\imath\vartheta_j} & 0 \\
0 & e^{-\imath\vartheta_j}
\end{pmatrix},\quad
g_j:=h_{m_x} \, 
t_j
\,h_{m_x}^{-1}.
\end{equation}

\begin{defn}
 \label{defn:center}
Let $Z:=\{\pm I_2\}\leqslant G$ be the center of $G$, and set
$Z_x:=G_x\cap Z$. %Hence $G_x=Z_x\cup (G_x\setminus Z_x)$.
\end{defn}

We shall see that there is a contribution to the asymptotics of 
$\Pi_{k\boldsymbol{\nu}}$ near $x$ associated to each $g\in G_x$, and that the shape of the
contribution is different depending on whether $g\in Z_x$ or $g\in G_x\setminus Z_x$.

If $h\in G_x\setminus Z_x$,
then $h\neq h^{-1}$. Hence $G_x\setminus Z_x$ has even cardinality $b_x=2\,a_x$, and $G_x$ has cardinality
$b_x+h$, where $h=1$ or $2$.
Perhaps after renumbering, we can assume that 
\begin{equation}
 \label{eqn:G_xminusZ_x}
G_x\setminus Z_x=\left\{g_1,\ldots,g_{a_x},g_{a_x+1}=g_1^{-1},\ldots,g_{b_x}=g_{a_x}^{-1}\right\}
\end{equation}
(it may well be that $a_x=0$).
%For every 
%$j=1,\ldots,a_x$, let us set
%$$
%t_j:=h_{m_x}^{-1}\,g_j\,h_{m_x}=
%\begin{pmatrix}
% e^{\imath\vartheta_j} & 0 \\
%0 & e^{-\imath\vartheta_j}
%\end{pmatrix}.
%$$

\begin{defn}
 \label{defn:fl}
If $\ell \in \mathbb{Z}$, let us define $f_\ell:T\rightarrow \mathbb{C}$
by letting 
$$
f_\ell :e^{\vartheta\,\beta}\in T\mapsto e^{\imath\,\ell\,\vartheta}\in \mathbb{C}^*.
$$
\end{defn}

Let us first consider the on-diagonal asymptotics of $\Pi_{k\boldsymbol{\nu}}\left(x, x\right)$,
assuming only that $\widetilde{\mu}$ is locally free at $x$.

\begin{defn}
 \label{defn:definition of Bxj}
If $z\in \mathbb{C}$, let us set 
$$
A(z) := \imath
\begin{pmatrix}
 0 & z \\
\overline{z} & 0
\end{pmatrix}\in \mathfrak{g}.
$$
Then, with notation as in (\ref{eqn:defn di t_j}), the $\mathbb{R}$-linear map 
$$
\eta_j: z\in \mathbb{C} \mapsto \big(\mathrm{Ad}_{t_j^{-1}} - \mathrm{id}_{\mathfrak{g}}\big)\big( A(z)\big)
\in \mathfrak{g}
$$
is injective. Therefore, since $\widetilde{\mu}$ is locally free at $x$, there is a positive definite $2\times 2$ matrix
$C(x;j)$ such that
$$
\big\|\mathrm{Ad}_{h_{m_x}}\big(\eta_j(z)\big)_X(x)\big\|^2
= \frac{1}{2}\cdot  Z^t\,C(x;j)\,Z \quad (z\in \mathbb{C})
$$
where $Z:=
\begin{pmatrix}
 a&b
\end{pmatrix}^t\in \mathbb{R}^2$
if $z=a+\imath\,b$. Let us define
$$
B(x;j):=C(x;j)+4\,\imath\, \sin(2\vartheta_j)\cdot \lambda(m_x)\,I_2.
$$

\end{defn}

Finally, let us denote by $V_3$ the area of the unit sphere $S^3\subset \mathbb{R}^4$, and set 
\begin{equation}
 \label{eqn:DG/T}
D_{G/T}:=2\pi/V_3.
\end{equation}
The geometric meaning of $D_{G/T}$ will be elucidated by Lemma \ref{lem:DG/T}.

We shall prove the following.

\begin{thm}
 \label{thm:G_xnotinZx}
Assume that $\widetilde{\mu}$ is locally free at $x$, and   
that $G_x\setminus Z_x=\left\{g_1,g_1^{-1}, \ldots, g_{a_x}, g_{a_x}^{-1}\right\}$. Then
as $k\rightarrow +\infty$ there is an asymptotic expansion
\begin{eqnarray*}
 %\lefteqn{
\Pi_{k\,\boldsymbol{\nu}}(x,x)
%} \\
& \sim &\Pi_{k\,\boldsymbol{\nu}}(x,x)_{Z_x}+\Pi_{k\,\boldsymbol{\nu}}(x,x)_{G_x\setminus Z_x}, 
\end{eqnarray*}
where
\begin{eqnarray}
\label{eqn:Z_x term}
 %\lefteqn{
\Pi_{k\,\boldsymbol{\nu}}(x,x)_{Z_x}
%} \\
& \sim &\frac{1}{2\,\lambda (m_x)} \cdot \left(\frac{\nu \,k }{2\,\pi\,\lambda (m_x)}\right)^d \\
&& \cdot 
\sum_{g\in G_x} f_{1-k\cdot \nu}(g)\cdot 
\left[1+ \sum_{j\ge 1}^{+\infty} k^{-j} B_{gj}(x)\right], \nonumber
\end{eqnarray}
and 
\begin{eqnarray}
\label{eqn:non Z_x term}
 %\lefteqn{
\Pi_{k\,\boldsymbol{\nu}}(x,x)_{G_x\setminus Z_x}
%} \\
& \sim &4\,\pi\cdot D_{G/T}\cdot \left(\frac{\nu\,k}{2\,\pi\cdot \lambda(m_x)}\right)^d\\
&&\cdot 
\left[\sum_{j=1}^{a_x}\Re\left(\frac{\imath\,\sin(\vartheta_j)\cdot e^{-\imath k \nu\cdot \vartheta_j}}{\sqrt{\det \big(B(x;j)\big)}}\right)
+\sum_{l\ge 1} k^{-l}\,P_{jl}(m_x)\right],\nonumber
\end{eqnarray}
for appropriate $\mathcal{C}^\infty$ functions $B_{gj},\, P_{jl}:M\rightarrow \mathbb{R}$.
\end{thm}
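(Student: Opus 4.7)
The plan is to start from the group-averaged representation
\[
\Pi_{k\boldsymbol{\nu}}(x,x)=(k\nu)\int_G \overline{\chi_{k\boldsymbol{\nu}}(g)}\,\Pi\!\big(\widetilde{\mu}_{g^{-1}}(x),x\big)\,\mathrm{d}g,
\]
insert the Boutet de Monvel--Sj\"ostrand parametrix $\Pi(x,y)\equiv\int_0^{+\infty}e^{\imath t\,\psi(x,y)}\,s(t,x,y)\,\mathrm{d}t$, rescale $t\mapsto k t$, and use Theorem \ref{thm:rapid decrease} to localize the $G$-integral to an arbitrarily small neighbourhood of the stabilizer $G_x$. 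Next I would apply the Weyl integration formula
\[
\int_G \overline{\chi_{k\boldsymbol{\nu}}(g)}\,F(g)\,\mathrm{d}g=\frac{1}{|W|}\int_{G/T}\!\!\int_T \overline{\chi_{k\boldsymbol{\nu}}(t)}\,|\Delta(t)|^2\,F(h t h^{-1})\,\mathrm{d}t\,\mathrm{d}(hT),
\]
and invoke $\chi_{k\boldsymbol{\nu}}(t)\,\Delta(t)=e^{\imath k\nu\vartheta}-e^{-\imath k\nu\vartheta}$ from (\ref{eqn:character chinu}) to replace $\overline{\chi_{k\boldsymbol{\nu}}(t)}\,|\Delta(t)|^2$ by the purely oscillatory weight $(e^{-\imath k\nu\vartheta}-e^{\imath k\nu\vartheta})\cdot \Delta(t)$. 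The result is a double oscillatory integral over $T\times G/T\times \mathbb{R}_+$, with the parameter $k$ appearing both in the rescaled Szeg\"o phase $k t\,\psi$ and in the character phase $\pm k\nu\vartheta$.

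For the transverse $G/T$-direction, Remark \ref{rem:positive eigenvalue} shows that the combined phase has a unique non-degenerate critical coset at $h_{m_x}T$; stationary phase in the two $G/T$-directions together with the $t$-integration yields the common prefactor $(\nu k/(2\pi\lambda(m_x)))^d$ and brings in the normalization $D_{G/T}=2\pi/V_3$ via Lemma \ref{lem:DG/T}. Inside the remaining $T$-integral, the critical points are precisely the elements $t_j$ of (\ref{eqn:defn di t_j}), one for each $g_j\in G_x$, so the stationary-phase expansion splits naturally into a sum indexed by $G_x$. I would carry out this last step in Heisenberg-type coordinates around $(g_j\cdot x,x)=(x,x)$, as in \cite{bsz,sz,pao-IJM,gp}.

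The case dichotomy $Z_x$ versus $G_x\setminus Z_x$ then emerges directly from the structure of these critical points. For $g\in Z_x$ one has $t_j=\pm I_2$ and $\mathrm{Ad}_{t_j}=\mathrm{id}_{\mathfrak{g}}$, so the maps $\eta_j$ of Definition \ref{defn:definition of Bxj} vanish identically and the transverse Hessian collapses to the purely symplectic one; this produces (\ref{eqn:Z_x term}), with the factor $f_{1-k\nu}(g)$ arising from combining $\overline{\chi_{k\boldsymbol{\nu}}}$ with the residual $\Delta(t)$ left over after the cancellation, whose evaluation at $t_j$ shifts the character weight by one. For $g_j\in G_x\setminus Z_x$, the infinitesimal action $\mathrm{Ad}_{h_{m_x}}\!\big(\eta_j(\cdot)\big)_X$ is non-degenerate and contributes the positive definite block $C(x;j)$ to the Hessian; the extra imaginary piece $4\imath\sin(2\vartheta_j)\,\lambda(m_x)\,I_2$ arises from the cross-derivatives between the $t$-direction of the character phase and the Heisenberg transverse variables of the Szeg\"o phase $\psi$, thereby assembling the matrix $B(x;j)$. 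The pairing $g_j\leftrightarrow g_j^{-1}$ in (\ref{eqn:G_xminusZ_x}) forces the two summands of each pair to be complex conjugate, so each pair collapses to $2\,\Re$, yielding (\ref{eqn:non Z_x term}). The main technical obstacle will be the precise computation of this mixed Hessian: one must decompose the tangent space at the critical locus into the $G_x$-invariant, the $G/T$-transverse, and the Heisenberg blocks, identify all cross-terms between the three phases involved, and carefully track orientations and volume normalizations so that the multiplicative constants $1/(2\lambda(m_x))$ and $4\pi\,D_{G/T}$ come out exactly. Once the leading order is in place, the subprincipal coefficients $B_{gj}$ and $P_{jl}$ are the standard higher-order terms of the stationary-phase expansion and are automatically smooth functions of $m_x$.
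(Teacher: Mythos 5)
Your outline follows the same broad architecture as the paper's proof (group averaging, Boutet de Monvel--Sj\"ostrand parametrix, localization near $G_x$, Weyl integration, stationary phase), but there are two genuine gaps, both stemming from treating the $Z_x$ and $G_x\setminus Z_x$ contributions as if they were handled by one uniform stationary-phase argument over $T\times G/T$. First, the Weyl parametrization $\rho:(gT,t)\mapsto g\,t\,g^{-1}$ is degenerate over the centre $Z$: the Jacobian factor $|\Delta(t)|^2$ vanishes at $t=\pm I_2$ and the preimage of $\pm I_2$ is all of $G/T\times\{\pm I_2\}$, so there is no non-degenerate critical coset in $G/T$ when $g\in Z_x$, and ordinary stationary phase in the $G/T$-directions does not apply there. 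The paper sidesteps this by reducing the $Z_x$ term to Theorem~\ref{thm:rescaled asymptotics} with $\mathbf{v}_1=\mathbf{v}_2=0$, whose proof replaces the $G/T$-integral by an integration by parts in the variable $t=\cos(2\theta)$: the leading contribution is a \emph{boundary term} at $t=1$, the boundary term at $t=-1$ is $O(k^{-\infty})$, and the interior term is of lower order. Saying that "the transverse Hessian collapses to the purely symplectic one" does not substitute for this; a degenerate direction cannot be handled by invoking a non-degenerate stationary-phase lemma.

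Second, for $g_j\in G_x\setminus Z_x$ each $g_j$ is a regular value of $\rho$ with a two-point fibre $(h_{m_x}T,t_j)$ and $(k_{m_x}T,t_j^{-1})$, cf.\ (\ref{eqn:inverse imagepj}). Your statement that there is one critical point $t_j$ per $g_j\in G_x$ misses the second sheet; without eliminating it one would miscount the contributions. The paper shows the second sheet is $O(k^{-\infty})$ because the associated phase is $\Psi'_x(u,\theta)=-\theta\,\big[2\lambda(m_x)\,u+\nu\big]$, whose $\theta$-derivative is bounded away from zero on the relevant domain, so repeated integration by parts kills it. Apart from these two points, your heuristics (the origin of $f_{1-k\nu}$, the vanishing of $\eta_j$ for $g\in Z_x$, the assembly of $B(x;j)$ from the real block $C(x;j)$ plus an imaginary cross-term, and the $g_j\leftrightarrow g_j^{-1}$ pairing producing $2\,\Re$) agree with the paper.
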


Let us next consider the near-diagonal asymptotics of $\Pi_{k\boldsymbol{\nu}}$.
As usual, these are conveniently expressed in 
Heisenberg local coordinates (HLC's) on $X$ (the reader is referred 
to \cite{sz} for a precise definition
and a general discussion thereof), and involve an invariant $\psi_2$ that we shall recall below.
To simplify our treatment, we shall assume in this case that $G_x=Z_x$.

Thus, given $x\in X$, let us choose 
a system of Heisenberg local coordinates (HLC's) on $X$ centered at $x$.
Following \cite{sz}, we shall denote this by the additive expression
$x+\upsilon\in X$, where
$\upsilon =(\theta,\mathbf{v})\in \mathbb{R}\times \mathbb{R}^{2n}$, with $\theta\in (-\pi,\pi)$ and
$\mathbf{v}$ sufficiently small. 
Translation in the \lq angular\rq \, coordinate $\theta$ corresponds to rotation in a fixed fiber of the 
circle bundle $X\rightarrow M$: whenever both sides are defined, 
$$
x+(\theta+\vartheta,\mathbf{v})=r_{\theta}\big(x+(\vartheta,\mathbf{v})\big),
$$
where $r_{\theta}(y)$ is the standard action of $e^{\imath\,\theta}\in S^1$ on $y\in X$.
On the other hand, the curve $\gamma:\tau\mapsto x+(\theta,\tau\,\mathbf{v})$ 
is \lq horizontal\rq \, for $\tau=0$, that is, $\dot{\gamma}(0)\in \mathcal{H}_{r_\theta(x)}(X/M)$.
When $\theta=0$, we shall abridge $x+(0,\mathbf{v})$ to $x+\mathbf{v}$. 

HLC's come with built-in unitary isomorphisms $T_{m_x}M\cong \mathbb{C}^d$ and $T_xX\cong \mathbb{R}\times T_{m_x}X$;
with this in mind, we shall use the expression $x+(\theta,\mathbf{v})$ for $(\theta,\mathbf{v})\in T_xX$ or
$x+\mathbf{v}$ for $\mathbf{v}\in T_{m_x}M$, where $m_x=\pi(x)$.

The following invariant plays an ubiquitous role in the study of rescaled local asymptotics of equivariant Szeg\"{o} kernels.

\begin{defn}
If $m\in M$ and $\mathbf{v}_1,\,\mathbf{v}_2\in T_mM$, following \cite{sz} let us set 
 \begin{eqnarray*}
\psi_2(\mathbf{v}_1,\,\mathbf{v}_2)&:=&
-\imath\, \omega_{ m } (\mathbf{v}_1 , \mathbf{v}_2 ) 
-\frac{1}{2} \, \big\| \mathbf{v}_1 - \mathbf{v}_2\big\|_m^2,
\end{eqnarray*}
where $\omega_m:T_mM\times T_mM\rightarrow \mathbb{R}$ is the symplectic form, and
$\|\cdot\|_m:T_mM\rightarrow \mathbb{R}$ is the norm function. 
\end{defn}

\begin{thm}
\label{thm:rescaled asymptotics}
Let us assume that $x\in X$ and that $\widetilde{\mu}$ is locally free on $X$ in a neighborhood
of $x$. Let $G_x \leqslant G$ be the stabilizer subgroup of $x$, and suppose that
$G_x\leqslant Z$.
Let us a choose system
of Heisenberg local coordinates on $X$ centered at $x$. 
Let us fix $C>0$ and $\epsilon\in (0,1/6)$. Then, uniformly for
$\mathbf{v}_1, \, \mathbf{v}_2\in T_{m_x}M$
satisfying $\|\mathbf{v}_j\|\le C\,k^{\epsilon}$ ($j=1,2$), and belonging to a subspace of $T_{m_x}M$ transverse to the $G$-orbit
through $m_x$,
we have for $k\rightarrow +\infty$ an asymptotic expansion of the form
\begin{eqnarray*}
\lefteqn{\Pi_{k\boldsymbol{\nu}}\left(x+\frac{1}{\sqrt{k}} \, \mathbf{v}_1, x+\frac{1}{\sqrt{k}} \, \mathbf{v}_2\right)}\\
& \sim &  \frac{1}{2\,\lambda (m_x)} \cdot \left(\frac{\nu \,k }{2\,\pi\,\lambda (m_x)}\right)^d \cdot 
\sum_{g\in G_x} f_{1-k\cdot \nu}(g)\cdot e^{u_0(\nu,m_x)\cdot \psi_2 \left(\mathbf{v}_1 ^{(g)}, \mathbf{v}_2 \right) }\\
&& \cdot 
\left[1+ \sum_{j\ge 1}^{+\infty} k^{-j/2} A_{gj}(x;\mathbf{v}_1^{(g)}, \mathbf{v}_2)\right],
\end{eqnarray*}
where
$A_{gj}(x;\cdot, \cdot)$ is a polynomial of degree $\le 3\,j$ and parity $(-1)^j$.
\end{thm}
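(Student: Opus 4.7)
The plan is to combine the character-averaging representation
$$
\Pi_{k\boldsymbol{\nu}}(y_1,y_2) \;=\; k\nu\int_G \overline{\chi_{k\boldsymbol{\nu}}(g)}\, \Pi\bigl(\widetilde{\mu}_{g^{-1}}(y_1),\,y_2\bigr)\, dg,\qquad y_j:=x+\mathbf{v}_j/\sqrt{k},
$$
with the Boutet de Monvel--Sj\"ostrand description of $\Pi$ as a complex FIO, $\Pi(a,b)=\int_0^\infty e^{iu\,\psi(a,b)}\,s(u,a,b)\,du$, and reduce the problem to iterated complex stationary phase. Applying the Weyl integration formula, together with (\ref{eqn:character chinu}) to cancel the Weyl denominator, leaves an explicit oscillatory weight $(e^{ik\nu\vartheta}-e^{-ik\nu\vartheta})(e^{-i\vartheta}-e^{i\vartheta})$ in the torus variable $\vartheta$. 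Rescaling $u\mapsto k\nu u$ then exposes $k$ as the common large parameter of the resulting integral on $\mathbb{R}_{>0}\times T\times G/T$.

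The second step is to localize. Theorem \ref{thm:rapid decrease}, the non-stationary phase lemma applied to the Boutet de Monvel--Sj\"ostrand phase, and the hypothesis $G_x\leq Z=\{\pm I\}$ together imply that, up to a term of order $O(k^{-\infty})$, only a shrinking neighborhood of the torus variable around $\pm I$ contributes. A partition of unity then expresses $\Pi_{k\boldsymbol{\nu}}(y_1,y_2)$ as a sum indexed by $g\in G_x$ of local oscillatory integrals. In each summand, the given Heisenberg local coordinates on $X$ centered at $x$, together with adapted local coordinates on $G/T$ vanishing at $h_{m_x}T$, turn the phase into an explicit function of $(u,\vartheta,hT;\mathbf{v}_1,\mathbf{v}_2)$ amenable to Taylor expansion.

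The core of the argument is complex stationary phase in $(u,\vartheta,hT)$. Stationarity in $(u,\vartheta)$, combined with the key identity (\ref{eqn:key equality}), selects $u=u_0(\nu,m_x)=\nu/(2\lambda(m_x))$ and picks out the $e^{+ik\nu\vartheta}$ summand of the character; the conjugate $e^{-ik\nu\vartheta}$ summand has no critical point and yields only $O(k^{-\infty})$. Stationarity in the $G/T$ direction localizes at $h_{m_x}T$, and the transverse Hessian computation, parallel to that in \cite{gp} and to the proof of Theorem \ref{thm:G_xnotinZx}, produces the amplitude $\frac{1}{2\lambda(m_x)}\bigl(\frac{\nu k}{2\pi\lambda(m_x)}\bigr)^d$. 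Taylor-expanding $\psi$ along the transverse Heisenberg directions and using its leading near-diagonal asymptotics $\psi\sim i\psi_2$ from \cite{sz} produces the universal Gaussian factor $\exp\bigl(u_0(\nu,m_x)\,\psi_2(\mathbf{v}_1^{(g)},\mathbf{v}_2)\bigr)$; the twist $\mathbf{v}_1\mapsto\mathbf{v}_1^{(g)}$ records the action of the stabilizer element $g$ via $d\widetilde{\mu}_{g^{-1}}$. Summing the local contributions with the weight $f_{1-k\nu}(g)$ reproduces the stated expansion.

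The main obstacle is the uniform control of the stationary phase expansion in the regime $\|\mathbf{v}_j\|\leq Ck^\epsilon$, where the linear growth of the phase in $\mathbf{v}_j$ would defeat a direct application of stationary phase. This is handled by the standard device from \cite{sz}: the quadratic $\psi_2$-contribution is extracted in advance and absorbed into an explicit Gaussian weight, so that the residual symbolic expansion in $k^{-1/2}$ has $j$-th term polynomial in $(\mathbf{v}_1^{(g)},\mathbf{v}_2)$ of degree $\leq 3j$. The transversality of $\mathbf{v}_j$ to the $G$-orbit ensures that no non-trivial component of $\mathbf{v}_j$ is an infinitesimal $G$-translate, so that the transverse Hessian at the critical point remains non-degenerate under the perturbation. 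Finally, the parity identity $A_{gj}(x;-\mathbf{v}_1^{(g)},-\mathbf{v}_2)=(-1)^jA_{gj}(x;\mathbf{v}_1^{(g)},\mathbf{v}_2)$ follows by tracking the parity of the successive stationary phase corrections under reversal of the rescaled tangent variables.
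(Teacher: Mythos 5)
Your overall scaffolding -- character-averaging via \eqref{eqn:equiv_projector}, the Weyl integration formula, insertion of the Boutet de Monvel--Sj\"ostrand FIO parametrix, rescaling in $u$, localization to $G_x\leqslant\{\pm I_2\}$, rescaling $\vartheta\mapsto\vartheta/\sqrt{k}$, and Taylor expansion of the phase in Heisenberg local coordinates -- matches the paper's strategy. But there is a genuine gap in the central analytical step, namely the treatment of the $G/T$ integration.

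You assert that ``stationarity in the $G/T$ direction localizes at $h_{m_x}T$, and the transverse Hessian computation \ldots produces the amplitude.'' This is not how the localization works, and a na\"ive stationary phase in all of $(u,\vartheta,gT)$ would fail. After rescaling, the leading phase is
\[
\mathcal{G}(u,\vartheta,gT)=\vartheta\cdot\bigl[u\,\langle\Phi_G(m_x),\mathrm{Ad}_g(\beta)\rangle-\nu\bigr]
=\vartheta\cdot\bigl[2\lambda(m_x)\,u\cos(2\theta)-\nu\bigr].
\]
The critical-point equations force $\vartheta=0$, and at $\vartheta=0$ the differential of $\mathcal{G}$ in the $G/T$ direction vanishes identically, since $\mathcal{G}$ is proportional to $\vartheta$. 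Thus the critical set is a whole manifold in the $(u,gT)$ slice $\{\vartheta=0\}$, not an isolated nondegenerate critical point, and there is no ``transverse Hessian in $G/T$'' to invoke. The localization at $gT=h_{m_x}T$ is \emph{not} a consequence of stationary phase; it is obtained in the paper by a different device. After substituting $t=\cos(2\theta)$ (so that $\mathcal{G}$ becomes linear in $t$), one writes $e^{\imath\sqrt{k}\Gamma(t;u,\vartheta)}$ as a $\partial_t$-exact expression and integrates by parts in $\mathrm{d}t$ over $[-1,1]$. This produces a boundary term $J'$ at $t=1$ (the pole $h_{m_x}T$, carrying the main contribution and the nondegenerate $(u,\vartheta)$-Hessian of determinant $-4\lambda(m_x)^2$), a boundary term $J''$ at $t=-1$ (which has no stationary point in $\vartheta$ and is $O(k^{-\infty})$), and a bulk term $J'''$ (which is of lower order because the $\partial_t$-derivative of the amplitude is divisible by $\vartheta$ and therefore vanishes on the $\vartheta=0$ critical slice). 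Your proposal, read literally, skips this entire mechanism; the amplitude $\tfrac{1}{2\lambda(m_x)}\bigl(\tfrac{\nu k}{2\pi\lambda(m_x)}\bigr)^d$ comes from the $(u,\vartheta)$-Hessian together with the boundary evaluation at $t=1$, not from a Hessian in the $G/T$ variable. Without the integration-by-parts step (or an equivalent reduction to a nondegenerate oscillatory integral), the argument does not close.

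A second, more minor, omission: the contribution from $-I_2\in G_x$ is obtained in the paper by the shift $\vartheta\mapsto\vartheta+\pi$, which produces exactly the twist $\mathbf{v}_1\mapsto\mathbf{v}_1^{-I_2}=\mathrm{d}_{m_x}\mu_{-I_2}(\mathbf{v}_1)$ and the scalar factor $e^{\imath\pi(1-k\nu)}=f_{1-k\nu}(-I_2)$, reducing the $-I_2$ summand to the identity summand. Your sketch anticipates the twist and the weight $f_{1-k\nu}(g)$ qualitatively but does not give the argument that produces them; this part is straightforward, but it should be made explicit.
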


We can apply Theorems \ref{thm:G_xnotinZx} and \ref{thm:rescaled asymptotics}
to estimate the dimension of $H(X)_{k\boldsymbol{\nu}}$ when $k\rightarrow +\infty$.
Let us make this explicit in the case where $\widetilde{\mu}$ is generically free,
leaving the possible variants to the interested reader.

\begin{cor}
 \label{cor:dimension estimate}
Assume that $\widetilde{\mu}$ is generically free (that is, $G_x$ is trivial for the general
$x\in X$). Then 
%\begin{equation}
%\label{eqn:dimension estimate}
%\lim_{k\rightarrow+\infty} \left[\left(\frac{\pi}{k}\right)^d \cdot H(X)_{k\boldsymbol{\nu}}\right]= 
%\int_M \,\mathrm{d}V_M(m)\left[\frac{1}{2\,\lambda (m)} \cdot \left(\frac{\nu }{2\,\lambda (m)}\right)^d\right].
%\end{equation}
\begin{equation}
\label{eqn:dimension estimate}
\lim_{k\rightarrow+\infty} \left[\left(\frac{\pi}{k\,\nu}\right)^d \cdot H(X)_{k\boldsymbol{\nu}}\right]= 
\int_M \,\mathrm{d}V_M(m)\left[\left(\frac{1 }{2\,\lambda (m)}\right)^{d+1}\right].
\end{equation}
\end{cor}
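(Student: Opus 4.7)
\bigskip

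\noindent\textbf{Proof proposal for Corollary \ref{cor:dimension estimate}.}

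The plan is to compute the dimension as the trace of the equivariant Szeg\"o projector,
$$
\dim H(X)_{k\boldsymbol{\nu}} \;=\; \int_X \Pi_{k\boldsymbol{\nu}}(x,x)\, \mathrm{d}V_X(x),
$$
and then substitute the on-diagonal asymptotic expansion of Theorem \ref{thm:G_xnotinZx}. Under the generic-freeness hypothesis, the set $X_{\mathrm{sing}}\subset X$ of points with non-trivial stabilizer is a closed subset of positive codimension, hence has measure zero; at every $x\in X\setminus X_{\mathrm{sing}}$ one has $G_x=Z_x=\{I_2\}$ and $G_x\setminus Z_x=\emptyset$, so only the $Z_x$-term survives in Theorem \ref{thm:G_xnotinZx}. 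Since $f_{1-k\nu}(I_2)=1$, the leading-order pointwise asymptotic at a locally-free point reads
$$
\Pi_{k\boldsymbol{\nu}}(x,x) \;\sim\; \frac{1}{2\,\lambda(m_x)}\cdot\left(\frac{\nu\,k}{2\pi\,\lambda(m_x)}\right)^d \,\bigl[1+O(k^{-1})\bigr].
$$

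First I would justify integrating this asymptotic term-by-term. Two ingredients are needed. (i) A crude uniform bound $\Pi_{k\boldsymbol{\nu}}(x,x)=O(k^d)$ on all of $X$, which follows from the microlocal description of $\Pi$ as a Fourier integral operator together with the fact that $k\boldsymbol{\nu}$ lives in the cone over $\boldsymbol{\nu}$ (this is standard in the Boutet de Monvel--Sj\"ostrand setup, cf.\ the arguments in \cite{pao-IJM} and \cite{gp}). (ii) Uniformity of the expansion in Theorem \ref{thm:G_xnotinZx} on compact subsets of the locally-free locus, which again follows from inspection of its proof (the constants depend continuously on $x$ through $\lambda(m_x)$ and the data of $G_x$). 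Then for any $\delta>0$ let $U_\delta$ be a tubular $\delta$-neighborhood of $X_{\mathrm{sing}}$; using (i), $\int_{U_\delta}\Pi_{k\boldsymbol{\nu}}(x,x)\,\mathrm{d}V_X\le C\,k^d\cdot \mathrm{Vol}(U_\delta)$, and applying (ii) on $X\setminus U_\delta$, a standard $\varepsilon/3$ argument (letting $k\to\infty$ first and $\delta\to 0$ afterwards) reduces the statement to the pointwise integral of the leading term.

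It then remains to evaluate
$$
\lim_{k\to\infty}\left(\frac{\pi}{k\,\nu}\right)^{\!d}\!\int_X \frac{1}{2\,\lambda(m_x)}\left(\frac{\nu\,k}{2\pi\,\lambda(m_x)}\right)^{\!d} \mathrm{d}V_X(x)
\;=\; \frac{1}{2^{d+1}} \int_X \frac{\mathrm{d}V_X(x)}{\lambda(m_x)^{d+1}}.
$$
Pushing forward by $p:X\to M$ and using $\mathrm{d}V_X=(2\pi)^{-1}\alpha\wedge p^*\mathrm{d}V_M$ with the fiber integral of $\alpha$ equal to $2\pi$, the integrand depends only on $m_x$, so this equals
$$
\int_M \frac{\mathrm{d}V_M(m)}{(2\lambda(m))^{d+1}} \;=\; \int_M \left(\frac{1}{2\lambda(m)}\right)^{d+1}\mathrm{d}V_M(m),
$$
which is the claimed limit.

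The main obstacle is the uniformity argument in step (ii): the expansion in Theorem \ref{thm:G_xnotinZx} was formulated pointwise, so care is needed to check that its error terms are uniform on compact subsets of the locally-free locus, with constants growing in a controlled way as one approaches $X_{\mathrm{sing}}$. The crude bound (i) ensures that this controlled growth is irrelevant once the shrinking-neighborhood argument is set up correctly. Any refinements (e.g.\ handling finite stabilizers of positive-measure components, should one wish to drop generic freeness) would amount to adding the $Z_x$- and $(G_x\setminus Z_x)$-contributions from Theorem \ref{thm:G_xnotinZx} over the stratification of $X$ by conjugacy class of stabilizer, but for the present statement this is not needed.
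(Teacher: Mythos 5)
Your proposal is correct and follows essentially the same route as the paper: express $\dim H(X)_{k\boldsymbol{\nu}}$ as $\int_X \Pi_{k\boldsymbol{\nu}}(x,x)\,\mathrm{d}V_X$, use generic freeness to identify the pointwise limit of $(\pi/(k\nu))^d\,\Pi_{k\boldsymbol{\nu}}(x,x)$ on the full-measure locally-free locus, observe a uniform polynomial bound, and push forward through the circle fibration. The paper closes the argument by invoking the dominated convergence theorem directly (with the uniform bound furnished by Theorems \ref{thm:G_xnotinZx} and \ref{thm:rescaled asymptotics} as the dominating function), whereas you carry out a shrinking-neighborhood $\varepsilon/3$ argument around $X_{\mathrm{sing}}$; the two are equivalent, with the DCT formulation being slightly more economical.
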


\section{Examples}
Given $Z\in \mathbb{C}^{d+1}\setminus\{0\}$, we shall denote by $[Z]\in \mathbb{P}^d$
its image in projective space. If $Z=(z_0,\cdots,z_d)$, then $[Z]=[z_0:\cdots :z_d]$.

To begin with, let us test our normalizations against the simplest case of the standard action of $G$ on $\mathbb{P}^1$.

\begin{exmp}
 \label{exmp:P1}
Let $\omega_{FS}$ denote the Fubini-Study form on $\mathbb{P}^1$.
The standard action of $G$ on $\mathbb{P}^1$, given by
$\mu_A([Z]):=[AZ]$, is Hamiltonian with respect to $2\,\omega_{FS}$, with 
nowhere vanishing moment
map 
\begin{equation}
\label{eqn:moment map P1}
 \Psi \big([z_0:z_1]\big) := \frac{\imath}{|z_0|^2+|z_1|^2} \, 
\begin{pmatrix}
 \frac{1}{2}\,\left(|z_0|^2-|z_1|^2\right) & z_0\cdot \overline{z}_1\\
\overline{z}_0 \cdot z_1 &  \frac{1}{2}\,\left(|z_1|^2-|z_0|^2\right)
\end{pmatrix}.
\end{equation}

Then $\lambda ([z_0:z_1])=1/2$ for any $[z_0:z_1] \in \mathbb{ P }^1$, 
and the contact action $\widetilde{\mu}$ on $S^3$ is free, since it may be identified with 
action of $SU(2)$ on itself by left translations. Furthermore,
$H_{k\,\boldsymbol{\nu}}(X)=H_{k\nu-1}(X)$, where the right hand side is the $(k\cdot\nu-1)$-th isotype 
for the $S^1$-action.
With $\nu=1$ the leading order term
of the expansion of Theorem \ref{thm:rescaled asymptotics}
is 
$$
\left(\frac{k }{\pi}\right)^d \cdot 
e^{\psi_2 (\mathbf{v}_1,\mathbf{v}_2) },
$$
in agreement with the standard off-diagonal scaling asymptotics for Szeg\"{o}
kernels on $\mathbb{P}^1$ (\cite{bsz}, \cite{sz}).

\end{exmp}

\begin{exmp}
 \label{exmp:P1timesP1}
Let us consider the diagonal action of $G$ on $\mathbb{P}^1\times \mathbb{P}^1$,
$$\mu_A\big([Z],[W]\big)=\big([AZ],[AW]\big).$$
For $r=1,2,\ldots$, consider the symplectic structure $\Omega_r:=\omega_{FS}\boxplus (r\,\omega_{FS})$
on $\mathbb{P}^1\times \mathbb{P}^1$. Then $\mu$ is Hamiltonian with respect to $2\,\Omega_r$, with moment map
$$
\Phi_r :\big([Z],[W]\big) \mapsto
\Psi\big([Z]\big)+ r\, \Psi\big([W]\big).
$$
If $r\ge 2$, then $\Phi_r$ is nowhere vanishing.

On the other hand, $\Omega_r$ is the normalized curvature of the positive line bundle
$A_r:=\mathcal{O}_{\mathbb{P}^1}(1)\boxtimes \mathcal{O}_{\mathbb{P}^1}(r)$. 
The unit circle bundle $X_r$ associated to $A_r$
is the image of $S^3\times S^3$ under the map 
$$(Z,W)\in S^3\times S^3\subset \mathbb{C}^2\times \mathbb{C}^2\mapsto Z\otimes W^{\otimes r}\in \mathbb{C}^{2(r+1)},$$
and the contact lift of $\mu$ is given by 
$$
\widetilde{\mu}_A\left(Z\otimes W^{\otimes r}\right)=(AZ)\otimes (AW)^{\otimes r}.
$$

Let us consider the stabilizer subgroup of $Z\otimes W^{\otimes r}$. 
We have 
$$\widetilde{\mu}_A\left(Z\otimes W^{\otimes r}\right)=Z\otimes W^{\otimes r}\quad\Leftrightarrow
\quad AZ=\lambda_1\,Z, \quad AW=\lambda_2\,W$$ 
for certain $\lambda_1,\lambda_2\in S^1$ with $\lambda_1\cdot \lambda_2^r=1$.

If $Z$ and $W$ are linearly dependent, then $\lambda_1=\lambda_2$ and $\lambda_1^{r+1}=1$. The stabilizer subgroup
of $Z\otimes W^{\otimes r}$ is therefore cyclic of order $r+1$. Otherwise, $(Z,W)$ is an eigenbasis of
$A$ and $\lambda_2=\lambda_1^{-1}$, $\lambda_1^{r-1}=1$. 
Hence, assuming that $Z\wedge W\neq 0$,
the stabilizer subgroup of $Z\otimes W^{\otimes r}$ is cyclic of order $r-1$ when $(Z,W)$ 
is an orthonormal basis of $\mathbb{C}^2$, and otherwise it is trivial when $r$ is even and $\{\pm I_2\}$
when $r$ is odd. 
Thus $\widetilde{\mu}$ is locally free for $r\ge 2$. Furthermore, the action is generically free when $r$ is even,
and the stabilizer is generically of order two when $r$ is odd.

Let us now consider how $V_{k\,\boldsymbol{\nu}}$ appears in 
$$
H(X_r)= \bigoplus_{l=0}^{+\infty}H_l(X_r),\quad H_l(X_r)\cong
H^0\left(\mathbb{P}^1\times \mathbb{P}^1,A_r^{\otimes l}\right).
$$
Since $A_r^{\otimes l}=\mathcal{O}_{\mathbb{P}^1}(l)\boxtimes \mathcal{O}_{\mathbb{P}^1}(l\,r)$,
by the K\"{u}nneth formula we have
\begin{eqnarray*}
 H^0\left(\mathbb{P}^1\times \mathbb{P}^1,A_r^{\otimes l}\right)&\cong&
H^0\left(\mathbb{P}^1,\mathcal{O}_{\mathbb{P}^1}(l)\right)\otimes H^0\left(\mathbb{P}^1,\mathcal{O}_{\mathbb{P}^1}(l\,r)\right)\\
&\cong &V_{(l+1,0)}\otimes V_{(lr+1,0)}.
\end{eqnarray*}
Thus the character of $H^0\left(\mathbb{P}^1\times \mathbb{P}^1,A_r^{\otimes l}\right)$
as a $G$-representation is $\chi_{l+1}\cdot \chi_{l\,r+1}$. Using (\ref{eqn:character chinu}), we see by a few computations that 
\begin{eqnarray}
\label{eqn:product character}
\lefteqn{ (\chi_{l+1}\cdot \chi_{l\,r+1}) \left(
\begin{pmatrix}
 e^{\imath\,\vartheta} & 0 \\
0 & e^{-\imath\,\vartheta}
\end{pmatrix}
\right)}\\
&=&\left( e^{\imath\,l\,\theta}+e^{\imath\,(l-2)\,\theta}+\cdots+ e^{-\imath\,l\,\theta}\right)
\cdot \frac{e^{\imath\,(l\,r+1)\,\theta}-e^{-\imath\,(l\,r+1)\,\theta}}{e^{\imath\,\theta}-e^{-\imath\,\theta}}\nonumber\\
&=&\sum_{j=0}^l\frac{e^{\imath\,(l+l\,r+1-2j)\,\theta}-e^{-\imath\,(l+l\,r+1-2j)\,\theta}}{e^{\imath\,\theta}-e^{-\imath\,\theta}}.\nonumber
\end{eqnarray}
Therefore,
$$
H^0\left(\mathbb{P}^1\times \mathbb{P}^1,A_r^{\otimes l}\right)\cong 
\bigoplus_{j=0}^l V_{(l+l\,r+1-2j,0)}.
$$
We conclude that $V_{k\,\boldsymbol{\nu}}$ appears at most once in each $H_l(X_r)$; it does appear once, in fact, if and
only if $k\,\nu$ and $l\,(r+1)+1$ have the same parity, and 
\begin{equation}
 \label{eqn:admissible integers}
\frac{k\,\nu-1}{r-1} \ge l \ge \frac{k\,\nu-1}{r+1}.
\end{equation}
Suppose, for example, that $k\,\nu$ and $r+1$ are both even. Then $l\,(r+1)+1$ is odd for any choice of $l$ and we conclude that
$H_{k\,\boldsymbol{\nu}}(X)$ vanishes. Notice that at the general $x\in X_r$ we have $G_x=\{\pm I_2\}$,
and $\sum_{g\in G_x}f_{1-k\,\nu}(g)=0$. 
If, on the other hand, $r+1$ is even and $k\,\nu$ is odd, then there is a copy of 
$V_{k\,\boldsymbol{\nu}}$ in $H_l(X_r)$ for
every integer $l$ satisfying (\ref{eqn:admissible integers}). Hence the number of copies of 
$V_{k\,\boldsymbol{\nu}}$ in $H(X_r)$ is $\sim 2\,k\,\nu/\left(r^2-1\right)$, so that the dimension of
$H_{k\,\boldsymbol{\nu}}(X)$ is $\sim 2\,(k\,\nu)^2/\left(r^2-1\right)$. For the general $x\in X_r$, 
we have in this case $\sum_{g\in G_x}f_{1-k\,\nu}(g)=2$.

When $r+1$ is odd, on the other hand, the generic stabilizer is trivial. For the general $x\in X_r$, therefore,
$\sum_{g\in G_x}f_{1-k\,\nu}(g)=1$ irrespective of $k\,\nu$. 
If $k\,\nu$ is even (respectively, odd) then there is a copy of 
$V_{k\,\boldsymbol{\nu}}$ in $H_l(X_r)$ if and only if $l$ is odd (respectively, even) and satisfies (\ref{eqn:admissible integers}).
Thus the number of copies of 
$V_{k\,\boldsymbol{\nu}}$ in $H(X_r)$ is $\sim k\,\nu/\left(r^2-1\right)$, so that the dimension of
$H_{k\,\boldsymbol{\nu}}(X)$ is $\sim (k\,\nu)^2/\left(r^2-1\right)$.

\end{exmp}

\section{Preliminaries}

\label{sctn:preliminaries}

%\subsection{Preliminaries}

In this Section, we shall collect various basic concepts and foundational results that will be invoked in the
following proofs; in addition, in \S \ref{sctn:compact reduction} we shall establish a technical preamble to the proofs 
in \S \ref{sctn:proofs}.

For any $x\in X$ and $g\in G$,
$$
\Pi_{k\boldsymbol{\nu}}\left(\widetilde{\mu}_{g}(x),\widetilde{\mu}_{g}(x)\right)
=\Pi_{k\boldsymbol{\nu}} (x,x);
$$
furthermore, transplanting a system of HLC's centered at $x$ by $g\in G$ (in an obvious sense)
yields a system of HLC's centered at $\widetilde{\mu}_{g}(x)$. Therefore, with no loss of generality
we might replace $x$ by $\widetilde{\mu}_{h_{m_x}}(x)$ (recall (\ref{eqn:defn of hm})), and assume that
\begin{equation}
 \label{eqn:defn of hm1}
\Phi_G(m_x) =\imath\, 
\begin{pmatrix}
 \lambda (m_x)& 0 \\
0 & -\lambda (m_x)
\end{pmatrix}.
\end{equation}
Since however it is convenient to keep explicit track 
of $h_{m_x}$, we shall make the generic assumption that $\Phi_G(m_x)$ is not
anti-diagonal. 
%Once it is known that there is an asymptotic
%expansion, the statement in general will follow by continuity.

\subsection{Recalls on Szeg\"{o} kernels}
\label{sctn:szego fio}

Let $\Pi:L^2(X)\rightarrow H(X)$ be the Szeg\"{o} projector, 
$\Pi(\cdot,\cdot)\in \mathcal{D}'(X\times X)$ the Szeg\"{o} kernel
(that is, the distributional kernel of $X$).
After \cite{bs} (see also the discussions in \cite{z}, \cite{bsz},
\cite{sz}), $\Pi$ is a FIO with complex phase, of the form
\begin{equation}
 \label{eqn:szegokernel}
\Pi(x,y)=\int_0^{+\infty}e^{iu\psi(x,y)}\,s(x,y,u)\,\mathrm{d}u,
\end{equation}
where $\Im (\psi)\ge 0$ and 
$$
s(x,y,u)\sim \sum_{j\ge 0}u^{d-j}\,s_j(x,y).
$$
We shall rely on the rather explicit description of $\psi$ in Heisenberg local coordinates
in \S 3 of \cite{sz}.
 
\subsection{The Weyl Integration Formula}
\label{sctn:weylint&char}

By composing $\Pi$ with the equivariant projector associated to $\boldsymbol{\mu}=(\mu>0)$
(see the discussion in \cite{guillemin-sternberg hq}),
we have 
\begin{equation}
 \label{eqn:equiv_projector}
\Pi_{\boldsymbol{\mu} }
\big (x',x'' \big)
%} \\
=
\mu \cdot \int_G \, \mathrm{d} V_G (g) \left[\overline{\chi_{\boldsymbol{\mu}} (g)} 
\,\Pi\left( \widetilde{\mu}_{g^{-1}}( x' ),
x'' \right) \right].
\end{equation}
We can remanage (\ref{eqn:equiv_projector}) as follows. 
Let us define $F:T\rightarrow \mathcal{D}'(X\times X)$ by setting
\begin{equation}
 \label{eqn:defn of F(t)}
F(t;x',x''):=\int_{G/T} \, \mathrm{d} V_{G/T} (g\,T) \,
\left[
%\cdot\left(e^{-\imath\,\vartheta}-e^{\imath\,\vartheta}\right)\cdot
\Pi\left( \widetilde{\mu}_{g\,t^{-1}\,g^{-1}}( x' ),
x'' \right) \right]\quad (t\in T)\nonumber.
\end{equation}
Since $t$ and $t^{-1}$ are conjugate in $G$, $F(x',x'';t)=F\left(x',x'';t^{-1}\right)$.
Let $t_1$ and $t_2=t_1^{-1}$ denote the diagonal entries of 
$t\in T$. Then by the Weyl Integration and character formulae \cite{var}
\begin{eqnarray}
 \label{eqn:equiv_projector1}
\lefteqn{\Pi_{\boldsymbol{\mu} }
\big (x',x'' \big)
} \\
&=&
\frac{\mu}{2} \cdot \int_T\,\, \mathrm{d} V_{T} (t) \,
\left(t_1^{-\nu}-t_1^{\nu}\right)
%\frac{e^{-\imath\,\nu\,\vartheta}-e^{\imath\,\nu\,\vartheta}}{e^{-\imath\,\vartheta}-e^{\imath\,\vartheta}}
\left(t_1-t_1^{-1}\right)\,F(t;x',x'')\nonumber\\
&=& I_+(\mu;x',x'')-I_-(\mu;x',x''), \nonumber
\end{eqnarray}
where 
\begin{eqnarray}
 \label{eqn:defn of I+}
%\lefteqn{
I_{\pm}(\mu;x',x'')%}\\
%&:=&
:=\frac{\mu}{2} \cdot \int_T\,\, \mathrm{d} V_{T} (t) \,\left[t_1^{\mp \nu} \cdot\left(t_1-t_1^{-1}\right)\cdot
F(t;x',x'')\right].\nonumber
\end{eqnarray}
By (\ref{eqn:defn of F(t)}), the change of variable $t\mapsto t^{-1}$ shows that 
$I_-(\mu;x',x'')=-I_+(\mu;x',x'')$.
Hence,
\begin{eqnarray}
 \label{eqn:equiv_projector2}
%\lefteqn{
\Pi_{\boldsymbol{\mu} }
\big (x',x'' \big)
%} \\
&=&2\, I_+(\mu;x',x'')\\
&=&\mu \cdot \int_T\,\, \mathrm{d} V_{T} (t) \,\left[t_1^{- \nu} \cdot\left(t_1-t_1^{-1}\right)\cdot
F(t;x',x'')\right].\nonumber
\end{eqnarray}

\subsection{The Haar measure on $G/T$}

\label{sctn:local coordinates G/T}
As is well-known, $G$ is diffeomorphic to the unit sphere $S^3 \subset \mathbb{C}^2$ by the map 
\begin{equation}
 \label{eqn:S2 and S3}
\begin{pmatrix}
 \alpha & -\overline{ \beta } \\
\beta & \overline{ \alpha }
\end{pmatrix} \in G\stackrel{\gamma}{ \longrightarrow }
%\mapsto 
\begin{pmatrix}
 \alpha \\
\beta
\end{pmatrix}\in S^3.
\end{equation}
Furthermore, $\gamma$ intertwines the right action of $T\cong S^1$ 
on $G$ with the standard circle action on $S^3$.
Therefore, the projection $G\rightarrow G/T$ may be identified with the Hopf map 
$S^3\rightarrow \mathbb{P}^1 \cong S^2$. 
It follows that the Haar measure on $G/T$ is a positive multiple of 
the pull-back of the standard measure on $S^2$. 
Explicitly, using the local coordinates $(\theta, \delta)\in (0,\pi/2) \times (-\pi,\pi)$ 
for the coset in $G/T$
of the matrix (\ref{eqn:S2 and S3}) with $\alpha=\cos(\theta) \, e^{ \imath \delta }$,
$\beta = \sin (\theta)$, then the Haar volume element on $G/T$ is
$$
\mathrm{d}V_{G/T}(g\,T) = \frac{1}{2\pi} \,\sin( 2 \, \theta ) \, \mathrm{ d }\theta \,\mathrm{ d } \delta.
$$

%\subsection{Reduction to the case $\Phi_G(m)$ diagonal}
%\label{sctn:Reduction to diagonal case}

\subsection{$G_x$-equivariant Heisenberg Local Coordinates}

Although inessential, it will slightly simplify our exposition to make a convenient choice of HLC's
centered at $x\in X$.
These depend on a system of \textit{preferred} adapted local coordinates at $m_x$,  
and of a \textit{preferred} local frame for $A$ at $m_x$ \cite{sz}. As to the former
(which needn't be holomorphic), 
we may use the exponential map at $m_x$, and for the latter we may assume without loss that 
it is $G_x$-invariant (by an argument as in \S 3 of \cite{pao-jsg0}). With this choice, we have the 
convenient equality
\begin{equation}
\label{eqn:invariant HLC}
 \widetilde{\mu}_g\big(x+(\theta,\mathbf{v})\big)=x+\big(\theta, \mathrm{d}_{m_x}\widetilde{\mu}_g(\mathbf{v})\big)
\quad (g\in G_x).
\end{equation}

\subsection{Reduction to compactly supported integrals}

\label{sctn:compact reduction}

For an arbitrary pair $(x_1,x_2)\in X\times X$, we consider the 
asymptotics of $\Pi_{k \boldsymbol{\nu} }
\big (x_{1} , x_{2} \big)$. Since 
\begin{equation}
\label{eqn:diagonal invariance}
 \Pi_{k \boldsymbol{\nu} }
\big (x_{1} , x_{2} \big)=
\Pi_{k \boldsymbol{\nu} }
\left (\widetilde{\mu}_g(x_{1}) , \widetilde{\mu}_g(x_{2}) \right)
\quad \forall\,g\in G, 
\end{equation}
we may assume without loss, by choosing $g\in G$ general, that $\Phi_G\circ \pi(x_2)$ is not anti-diagonal;
choosing $g=h_{m_{x_2}}$ (Definition \ref{defn:coset}), we may even reduce to the case where
$\Phi_G(m_{x_2})$ is diagonal.

By (\ref{eqn:equiv_projector}) with $\mu=k\,\nu$,
\begin{eqnarray}
\label{eqn:szego rescaled proj}
%\lefteqn{
\Pi_{k \boldsymbol{\nu} }
\big (x_{1} , x_{2} \big)
%} \\
& = &
k\nu \, \int_G \, \mathrm{d} V_G (g) \left[\overline{\chi_{k\boldsymbol{\nu}} (g)} \,\Pi\left( \widetilde{\mu}_{g^{-1}}( x_{1} ),
x_{2} \right) \right].
\end{eqnarray}

For some suitably small $\delta>0$, let us define 
\begin{eqnarray}
 \label{eqn:Gdelta}
G_{<\delta}(x_1,x_2)&:=&\big\{g\in G\,:\,\mathrm{dist}_X\left(\widetilde{\mu}_{g^{-1}}(x_1),x_2\right)<\delta\big\},\\
G_{>\delta}(x_1,x_2)&:=&\big\{g\in G\,:\,\mathrm{dist}_X\left(\widetilde{\mu}_{g^{-1}}(x_1),x_2\right)>\delta\big\}.
\nonumber
\end{eqnarray}

Then $\mathcal{U}:=\{G_{<2\,\delta}(x,y),\,G_{>\delta}(x,y)\}$ is an open cover of $G$, and we may consider 
a $\mathcal{C}^\infty$ partition of unity $\{\varrho, 1-\varrho\}$ of $G$ subordinate to $\mathcal{U}$.
One can see that $\varrho=\varrho_{x_1,x_2}$ may be chosen to depend smoothly on $(x_1,x_2)\in X\times X$;
we shall omit the dependence on $(x_1,x_2)$.

When $\varrho(g)\neq 1$, we have 
$
\mathrm{dist}_X\left( \widetilde{\mu}_{g^{-1}}( x_{1} ),
x_{2}\right) \ge \delta>0$.
Because $\Pi$ is smoothing away from the diagonal, the function 
$$
g\mapsto \big( 1-\varrho(g)\big)\cdot \Pi\left( \widetilde{\mu}_{g^{-1}}( x_{1} ),
x_{2} \right)
$$
is $\mathcal{C}^\infty$ on $G$. Therefore, taking Fourier transforms and arguing as 
in \S \ref{sctn:weylint&char}, we obtain the following Proposition.

\begin{prop}
 \label{prop:localization near G_x}
Only a rapidly decreasing contribution to the asymptotic is lost, if the integrand of 
(\ref{eqn:szego rescaled proj}) is multiplied by $\varrho(g)$. 
\end{prop}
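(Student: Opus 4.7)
The plan is to control the discarded term
\begin{equation*}
R_k(x_1,x_2) := k\nu \int_G \mathrm{d}V_G(g)\, \overline{\chi_{k\boldsymbol{\nu}}(g)}\,\bigl(1-\varrho(g)\bigr)\,\Pi\bigl(\widetilde{\mu}_{g^{-1}}(x_1), x_2\bigr)
\end{equation*}
and show $R_k(x_1,x_2) = O(k^{-\infty})$, uniformly in $(x_1,x_2) \in X \times X$. The crucial observation is that wherever $1-\varrho(g)\neq 0$ one has $\mathrm{dist}_X\bigl(\widetilde{\mu}_{g^{-1}}(x_1),x_2\bigr)\geq \delta>0$, so the factor $\Pi\bigl(\widetilde{\mu}_{g^{-1}}(x_1),x_2\bigr)$ is evaluated in the region where the Szeg\"{o} kernel is smooth. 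Consequently, the full integrand defining $R_k$ is genuinely $C^\infty$ in $g\in G$, with smooth parametric dependence on $(x_1,x_2)$ (via the smoothness of $\varrho=\varrho_{x_1,x_2}$), and all its $C^N$-norms are bounded uniformly in $(x_1,x_2)$ by compactness of $X\times X$.

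Next I would apply the Weyl-integration manipulation of \S\ref{sctn:weylint&char} to $R_k$ verbatim, with $\Pi$ replaced by $(1-\varrho)\,\Pi$ in the definition of $F$. This yields
\begin{equation*}
R_k(x_1,x_2) = k\nu \int_T \mathrm{d}V_T(t)\, t_1^{-k\nu}\,(t_1-t_1^{-1})\,\widetilde F(t;x_1,x_2),
\end{equation*}
where $\widetilde F(t;x_1,x_2)$ is the $C^\infty$-in-$t$ analogue of $F$ built from the smoothened integrand above (the cancellation $I_- = -I_+$ still goes through since the substitution $t \mapsto t^{-1}$ preserves both the $G/T$-conjugation average and the cut-off). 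Parametrising $T$ by $t_1 = e^{\imath\vartheta}$, this is, up to a bounded factor, the $(k\nu)$-th Fourier coefficient on the circle of a $C^\infty$ function of $\vartheta$; repeated integration by parts in $\vartheta$ gives $R_k(x_1,x_2) = O\bigl((k\nu)^{-N}\bigr)$ for every $N$, with implied constants controlled by some fixed-order $C^N$-norm of $\widetilde F$, hence uniformly in $(x_1,x_2)$ by the first paragraph.

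The only moderately delicate point of the plan is the parameter-uniform construction of the cut-off $\varrho_{x_1,x_2}$, which I would handle by a standard smooth partition-of-unity argument on $G$ subordinate to the cover $\mathcal{U}$, with $(x_1,x_2)\in X\times X$ entering as continuous parameters through the function $(g,x_1,x_2)\mapsto\mathrm{dist}_X\bigl(\widetilde{\mu}_{g^{-1}}(x_1),x_2\bigr)$; paracompactness and the compactness of $X$ allow one to arrange smooth dependence of $\varrho$ on its parameters with uniform bounds on all derivatives. Beyond this minor bookkeeping there is no genuine obstacle: the remainder is the textbook rapid-decay estimate for Fourier coefficients of smooth periodic functions, rendered uniform by compactness of $X\times X$.
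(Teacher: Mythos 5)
Your argument is essentially the paper's own proof, spelled out in greater detail: the paper observes that $(1-\varrho(g))\,\Pi\bigl(\widetilde{\mu}_{g^{-1}}(x_1),x_2\bigr)$ is $\mathcal{C}^\infty$ in $g$ because $\Pi$ is smoothing off the diagonal, and then appeals to ``taking Fourier transforms and arguing as in \S 3.2'' --- i.e.\ precisely the Weyl-integration reduction to the torus and rapid decay of Fourier coefficients of a smooth function that you carry out. Your verification that $\widetilde F(t)=\widetilde F(t^{-1})$ (hence $I_-=-I_+$) survives the insertion of $1-\varrho$ is a correct and worthwhile point to make explicit, since the cut-off $1-\varrho$ is a class function of $g\,t\,g^{-1}$ averaged over $G/T$ and therefore inherits the Weyl symmetry; the uniformity-in-$(x_1,x_2)$ discussion matches the paper's remark that $\varrho_{x_1,x_2}$ may be chosen to depend smoothly on the base points.
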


On the support of $\varrho$, $\left( \widetilde{\mu}_{g^{-1}}( x_{1} ),
x_{2}\right)$ lies in a small neighborhood of the diagonal; since any smoothing term will
contribute negligibly to the asymptotics, we may replace $\Pi$ by its representation as
an FIO (\S \ref{sctn:szego fio}). 
If we insert (\ref{eqn:szegokernel}) in (\ref{eqn:szego rescaled proj})
(with the factor $\varrho (g)$ included), and apply the rescaling $u\mapsto k\,u$ we obtain
\begin{eqnarray}
\label{eqn:szego rescaled proj 1}
%\lefteqn{
\Pi_{k \boldsymbol{\nu} }
\big (x_{1} , x_{2} \big)
%} \\
& \sim &
k^2\nu \, \int_G \, \mathrm{d} V_G (g) \,\int_0^{+\infty}\,\mathrm{d}u\\
&&\left[\varrho(g)\cdot\overline{\chi_{k\boldsymbol{\nu}} (g)} \,
e^{
\imath\,k\,u\,\psi\left( \widetilde{\mu}_{g^{-1}}( x_{1} ),
x_{2} \right)
} \cdot s \left( \widetilde{\mu}_{g^{-1}}( x_{1} ),
x_{2}, k\,u \right) \right].\nonumber
\end{eqnarray}

Integration in
(\ref{eqn:szego rescaled proj 1}) can be reduced to a suitable compact domain
without altering the asymptotics.

\begin{prop}
 \label{prop:compact support u}
Let $D\gg 0$ and let $\rho \in \mathcal{C}^\infty_c(\mathbb{R})$ be $\ge 0$, supported in $\big(1/D, D\big)$,
and $\equiv 1$ on $(2/D, D/2)$.
Then only a rapidly decreasing contribution to the asymptotics is lost, 
if the integrand on the last line of (\ref{eqn:szego rescaled proj 1}) is
multiplied by $\rho(u)$. 
\end{prop}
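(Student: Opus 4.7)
I would begin by setting $\eta(u):=1-\rho(u)$, which is supported in $(0,2/D)\cup(D/2,+\infty)$, so that the task becomes proving that inserting $\eta(u)$ into the integrand of (\ref{eqn:szego rescaled proj 1}) contributes only an $O(k^{-\infty})$ term to $\Pi_{k\boldsymbol{\nu}}(x_1,x_2)$, uniformly on compact subsets of $X\times X$. The plan is to treat the small-$u$ tail $(0,2/D)$ and the large-$u$ tail $(D/2,+\infty)$ separately, in both cases via iterated non-stationary phase in $u$ exploiting the oscillatory factor $e^{iku\psi}$ and the Boutet de Monvel--Sj\"ostrand property $\Im\psi\ge 0$.

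My first step would be to handle the region in $g$-space where $|\psi(\widetilde{\mu}_{g^{-1}}(x_1),x_2)|\ge \psi_0>0$ for a fixed $\psi_0$. There the operator $L^{*}:=-(ik\psi)^{-1}\partial_u$ reproduces $e^{iku\psi}$, and iterating it $N$ times under integration by parts in $u$ produces a factor of size $(k\psi_0)^{-N}$; combined with the symbol bound $|s(\cdot,ku)|\le C(1+ku)^d$ and, on the large-$u$ tail, the bound $|e^{iku\psi}|\le 1$ coming from $\Im\psi\ge 0$, the $u$-integral converges absolutely for $N>d+1$ and is bounded by $O\big(k^{d+2-N}\psi_0^{-N}\big)$. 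Choosing $N$ arbitrarily large yields $O(k^{-\infty})$ from this region, with loss absorbed by the prefactor $k^2\nu$ and the $g$-integration.

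The main obstacle will be the complementary region $|\psi|<\psi_0$, a tubular neighborhood of the CR-diagonal locus $\{g:\widetilde{\mu}_{g^{-1}}(x_1)\in S^1\cdot x_2\}$ on which $L^{*}$ is singular. I would exploit that $\partial_g\psi$ is non-vanishing transverse to this locus, so the combined gradient $\nabla_{(u,g)}(u\psi)=(\psi,u\partial_g\psi)$ satisfies $|\nabla_{(u,g)}(u\psi)|^2\ge c\,(|\psi|^2+u^2)$ locally, and switch to integration by parts in the joint variable $(u,g)$ via $L:=(ik|\nabla_{(u,g)}(u\psi)|^2)^{-1}\overline{\nabla_{(u,g)}(u\psi)}\cdot\nabla_{(u,g)}$, which satisfies $L(e^{iku\psi})=e^{iku\psi}$. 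On the large-$u$ tail ($u\ge D/2$) the denominator $|\nabla_{(u,g)}(u\psi)|^2\ge c(D/2)^2$ is uniformly bounded below and iterated joint IBPs immediately produce $O(k^{-\infty})$. The hardest part is the small-$u$ tail, where the simultaneous degeneration $u,\psi\to 0$ can occur: here I would use a dyadic decomposition at scales $u\sim 2^{-j}$ and $|\psi|\sim 2^{-l}$, applying the joint IBP on scales with $\max(2^{-j},2^{-l})\ge k^{-1/2+\varepsilon}$ and treating the residual sliver by a direct measure estimate (the set $\{u<\epsilon_1,|\psi|<\epsilon_2\}$ has volume $O(\epsilon_1\epsilon_2^2)$ because $\psi$ vanishes transversally on a real-codimension-$2$ submanifold of $G$). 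Summing all contributions absorbs them into an $O(k^{-\infty})$ error, completing the reduction to $\mathrm{supp}\,\rho\subseteq(1/D,D)$.
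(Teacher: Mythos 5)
Your plan has a genuine gap: it never engages the factor $\overline{\chi_{k\boldsymbol{\nu}}(g)}$ in the integrand of (\ref{eqn:szego rescaled proj 1}), yet this factor is $k$-dependent and rapidly oscillating. Treating the integral as if the only $k$-dependent oscillation is $e^{iku\psi}$ is not accurate: $\chi_{k\boldsymbol{\nu}}$ is a sum of roughly $k\nu$ exponentials $e^{\imath(k\nu-1-2j)\vartheta_G(g)}$, so any $g$-derivative acting on it produces a factor of size $k$. Your joint $(u,g)$ integration-by-parts operator $L$ gains a factor $k^{-1}$ per step but loses $k^{+1}$ each time a $g$-derivative hits the character, so the iteration does not produce $O(k^{-\infty})$. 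This is precisely why the paper first makes the character's phase explicit (via the Weyl integration formula, or via the matrix-element/Jacobi-polynomial decomposition in (\ref{eqn:diagonal matrix element})) so that it can be absorbed into the oscillatory phase; only then does one have an effective phase $\Psi_b = u\,\psi + b\cdot\theta_G$ (or $\Gamma_b$, or $\Psi$) with explicit $k$-independent derivatives.

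The failure is most visible on the small-$u$ sliver. On the support of $\varrho$ the point $\widetilde{\mu}_{g^{-1}}(x_1)$ is close to $x_2$, so $\psi$ can be arbitrarily close to $0$; as $u\to 0$ the combined gradient $\nabla_{(u,g)}(u\psi) = (\psi,\; u\,\partial_g\psi)$ degenerates, and there is no oscillation left in your phase to exploit. A crude bound on the sliver then gives $k^2\nu\cdot|\chi_{k\boldsymbol{\nu}}|\cdot|s(\cdot,ku)|\lesssim k^{d+3}$ times the measure of the sliver, which is nowhere near $O(k^{-\infty})$; the dyadic/volume argument you propose cannot rescue this, because the integrand itself is not small there. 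What saves the day in the paper is exactly the term contributed by the character: after passing to the torus, the $\vartheta$-derivative of the phase is $u\,\partial_\vartheta\psi - \nu$, which for small $u$ is bounded away from $0$ by $\nu/2$ (see (\ref{eqn:phasePsijk}) and the surrounding argument in Lemma \ref{lem:Pik12prime}), enabling iterated integration by parts in $\vartheta$. For the large-$u$ tail the same phase derivative grows like $2\lambda(m_{x_2})\,u$, using the nonvanishing of the moment map (see (\ref{eqn:xi derivative of psi})); again the $-\nu$ shift from the character is what keeps the argument uniform. In short, you need to incorporate the character into the phase before integrating by parts; without that, both tails of the $u$-integral resist your method.
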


\begin{proof}[Proof of Proposition \ref{prop:compact support u}]
Let us deal with the cases $u\gg 0$ and $0<u\ll 1$ separately.

\textbf{Case 1: $u\gg 0$.}

To begin with, let $\rho_1':(0,+\infty)\rightarrow [0,+\infty)$
be $\mathcal{C}^\infty$, $\equiv 1$ on $(0,D/2)$ and $\equiv 0$ on
$(D,+\infty)$ \footnote{ Throughout this proof, the primes do not stand for derivatives.}.
Let us set $\rho_2'(u):= 1-\rho'_1(u)$. By (\ref{eqn:szego rescaled proj 1}),
\begin{equation}
 \label{eqn:Piknu rho1}
\Pi_{k \boldsymbol{\nu} }
\big (x_{1} , x_{2} \big)\sim \Pi_{k \boldsymbol{\nu} }
\big (x_{1} , x_{2} \big)_1+\Pi_{k \boldsymbol{\nu} }
\big (x_{1} , x_{2} \big)_2,
\end{equation}
where 
\begin{eqnarray}
\label{eqn:szego rescaled proj 1j0}
%\lefteqn{
\Pi_{k \boldsymbol{\nu} }
\big (x_{1} , x_{2} \big)_j
%} \\
& \sim &
k^2\nu \, \int_G \, \mathrm{d} V_G (g) \,\int_0^{+\infty}\,\mathrm{d}u\,\left[e^{
\imath\,k\,u\,\psi\left( \widetilde{\mu}_{g^{-1}}( x_{1} ),
x_{2} \right)
} \right.\nonumber\\
&&\left.\varrho(g)\cdot\rho_j'(u)\cdot \overline{\chi_{k\boldsymbol{\nu}} (g)} \,
\cdot s \left( \widetilde{\mu}_{g^{-1}}( x_{1} ),
x_{2}, k\,u \right) \right].
\end{eqnarray}
We need to show that $\Pi_{k \boldsymbol{\nu} }
\big (x_{1} , x_{2} \big)_2=O\left(k^{-\infty}\right)$.

Let us set 
\begin{eqnarray}
 \label{eqn:open cover of Gdelta}
G'&:=&\big\{g\in G\,:\,\mathrm{dist}_G\big(g,\{\pm I_2\}\big)< 2\,\delta\big\},\\
G''&:=&\big\{g\in G\,:\,\mathrm{dist}_G\big(g,\{\pm I_2\}\big)>\delta\big\}.\nonumber
\end{eqnarray}
Then $\{G',G''\}$ is also an open cover of $G$, and we may consider a $\mathcal{C}^\infty$ partition of unity
$\beta'+\beta''=1$ on $G$ subordinate to it. Let us set 
$$\varrho':=\varrho \cdot \beta',\quad \varrho'':=\varrho \cdot \beta''.$$
Then $\varrho =\varrho'+\varrho''$,
where $\varrho'$ is supported in a small neighborhood of
$\{\pm I_2\}$, and $\varrho''$ is supported away from $\{\pm I_2\}$.

Accordingly, we have
\begin{equation}
 \label{eqn:Pik2primeprime}
\Pi_{k \boldsymbol{\nu} }
\big (x_{1} , x_{2} \big)_2
=\Pi_{k \boldsymbol{\nu} }
\big (x_{1} , x_{2} \big)_2'+
\Pi_{k \boldsymbol{\nu} }
\big (x_{1} , x_{2} \big)_2'',
\end{equation}
where in the former (respectively, latter) summand $\varrho (g)$ has been replaced by $\varrho'(g)$
(respectively, $\varrho''(g)$).

We shall deal with the two summands in (\ref{eqn:Pik2primeprime}) separately.

\begin{lem}
 \label{lem:Pik2prime}
$\Pi_{k \boldsymbol{\nu} }
\big (x_{1} , x_{2} \big)_2'=O\left(k^{-\infty}\right)$ as $k\rightarrow +\infty$.
\end{lem}

\begin{rem}
\label{rem:Pik2prime}
 In the integration defining $\Pi_{k \boldsymbol{\nu} }
\big (x_{1} , x_{2} \big)_2'$, $\widetilde{\mu}_{g^{-1}}( x_{1} )$ is close to $x_{2}$,
$g$ is close to $\{\pm I_2\}$, and $u$ is very large.
\end{rem}

\begin{proof}[Proof of Lemma \ref{lem:Pik2prime}]
Let us assume to fix ideas that $k\,\nu=2 \ell+1$ is odd.
Then $V_{k\boldsymbol{\nu}}$ may be identified with the vector space $\mathbb{C}^{(2\ell)}[z_1,z_2]$ of complex 
homogeneous polynomials of degree $2\,\ell$ in two variables.
A natural basis of the latter is given by the monomials $P_\mu (z_1,z_2):=z_1^{\ell-\mu}\,z_2^{\ell+\mu}$, where 
$\mu\in \{-\ell, \ldots, 0, \ldots, \ell\}$. We shall accordingly denote the matrix elements of the representation 
$V_{k\boldsymbol{\nu}}$ by $\mathcal{M}^{(k\boldsymbol{\nu})}_{a,b}(g)$, where $g\in G$ and $a,b\in  \{-\ell, \ldots, 0, \ldots, \ell\}$.
Thus 
\begin{eqnarray}
\label{eqn:szego rescaled proj 1ja}
%\lefteqn{
\Pi_{k \boldsymbol{\nu} }
\big (x_{1} , x_{2} \big)_2'
%} \\
& \sim &
\sum_{a=-\ell}^{\ell}\Pi_{k \boldsymbol{\nu} }
\big (x_{1} , x_{2} \big)_{2,a}',
\end{eqnarray}
where 
\begin{eqnarray}
\label{eqn:szego rescaled proj 1jb}
%\lefteqn{
\Pi_{k \boldsymbol{\nu} }
\big (x_{1} , x_{2} \big)_{2,a}'
%} \\
& := &
k^2\nu \, \int_G \, \mathrm{d} V_G (g) \,\int_{D/2}^{+\infty}\,\mathrm{d}u\,\left[e^{
\imath\,k\,u\,\psi\left( \widetilde{\mu}_{g^{-1}}( x_{1} ),
x_{2} \right)
} \right.\\
&&\left.\cdot \overline{\mathcal{M}^{(k\boldsymbol{\nu})}_{a,a}(g)} \cdot\varrho'(g)\cdot\rho_2'(u)
\cdot s \left( \widetilde{\mu}_{g^{-1}}( x_{1} ),
x_{2}, k\,u \right) \right].\nonumber
\end{eqnarray}

On the support of $\varrho'$, either $g\sim I_2$ or $g\sim -I_2$; hence we can write 
\begin{equation}
 \label{eqn:gthetadelta12}
g=
\begin{pmatrix}
 A(g)\,e^{\imath \theta_G (g)} &- \overline{\gamma(g)}\\
\gamma(g) &A(g)\,e^{-\imath \theta_G(g)} 
\end{pmatrix},
\end{equation}
where $A(g)>0$, and either $\theta_G(g)\sim 0$ or $\theta_G(g)\sim \pi$. 
Furthermore, $A,\,\gamma,\,\theta_G$ are $\mathcal{C}^\infty$
functions of $g\in G$ on a neighborhood of the support of $\rho'$.

By the discussion in \S 2.6.3 of
\cite{apple} and in \S 11 of \cite{rt}, with $g$ as in 
(\ref{eqn:gthetadelta12}) we have 
\begin{equation}
\label{eqn:diagonal matrix element}
\mathcal{M}^{(k\boldsymbol{\nu})}_{a,a}(g)=e^{-2\imath a \theta_G (g)} \cdot R_{\ell,a}\left(A(g)^2\right),
\end{equation}
where $R_{\ell,a}$ may be expressed in terms of suitable Jacobi polynomials, and is itself a real polynomial. 
Since the left hand side of (\ref{eqn:diagonal matrix element}) is an entry of a unitary matrix, we have at any rate
$\left|R_{\ell,a}\left(A(g)^2\right)\right|\le 1$. 

Inserting (\ref{eqn:diagonal matrix element}) in (\ref{eqn:szego rescaled proj 1jb}), we obtain:
\begin{eqnarray}
\label{eqn:szego rescaled proj 1jJacobi}
%\lefteqn{
\Pi_{k \boldsymbol{\nu} }
\big (x_{1} , x_{2} \big)_{2,a}'
%} \\
& := &
k^2\nu \, \int_G \, \mathrm{d} V_G (g) \,\int_{D/2}^{+\infty}\,\mathrm{d}u\,\left[e^{
\imath\,k\,\Psi_{a/k}(x_{1},x_2;u,g)
} \right.\\
&&\left.\cdot R_{\ell,a}\left(A(g)^2\right) \cdot\varrho'(g)\cdot\rho_2'(u)
\cdot s \left( \widetilde{\mu}_{g^{-1}}( x_{1} ),
x_{2}, k\,u \right) \right].\nonumber
\end{eqnarray}
where for every $b\in \mathbb{R}$ we set
\begin{equation}
\Psi_{b}(x_{1},x_2;u,g) := u\cdot\psi\left( \widetilde{\mu}_{g^{-1}}( x_{1} ),
x_{2} \right)+2\,b\cdot\theta_G (g).
\end{equation}
Since $|a/k|\le \nu/2$, the phases $\Psi_{a/k}$ form a bounded family.

%Let us set 
%\begin{equation}
% \label{eqn:Bbeta}
%B:=
%\begin{pmatrix}
% 1 & 0 \\
%0 & -1
%\end{pmatrix}, \quad \beta :=\imath \,B \in \mathfrak{g}.
%\end{equation}
%%
%=E(\imath \tau B) 
Let $E:\mathfrak{\eta}\in \mathfrak{g}\mapsto E(\eta):=e^\eta\in G$
be the exponential map, and let $\beta$ be as in Remark \ref{rem:positive eigenvalue}.
Then every element of the standard torus $T\leqslant G$ may be written
\begin{equation}
 \label{eqn:t as exponential}
t=E(\theta \,\beta)=e^{\theta \beta}=\begin{pmatrix}
 e^{\imath \theta} & 0 \\
0 & e^{-\imath \theta}
\end{pmatrix}
.
\end{equation}

Let us view $\beta$ as a left-invariant vector field on $G$; the corresponding 1-parameter group of
diffeomorphisms is $\varphi_\tau(g) := g\,e^{\tau \,\beta}$. Thus
\begin{equation}
 \label{eqn:unitary flow}
\varphi_\tau : \begin{pmatrix}
 u & -\overline{v}\\
v & \overline{ u }
\end{pmatrix}  \mapsto 
\begin{pmatrix}
 u\cdot e^{\imath \tau} & -\overline{v}\cdot e^{-\imath \tau}\\
v\cdot e^{\imath \tau} & \overline{ u }\cdot e^{-\imath \tau}
\end{pmatrix}.
\end{equation}

Therefore, if $L_\beta$ is the same vector field viewed as a differential operator on $G$,
then $L_\beta(\theta_G)=1$ on the support of $\varrho'$. Also, $L_\beta$ is a skew-hermitian
operator on $L^2(G)$, since $\phi_\tau$ induces a 1-parameter group of unitary automorphisms of
$L^2(G)$; hence, $L^t_\beta=-\overline{L}_\beta$. 
Furthermore, the function $g\mapsto A(g)^2$ is in $\mathcal{C}^\infty (G)$, real and $\varphi_\tau$-invariant
for every $\tau\in \mathbb{R}$; therefore, 
$L_\beta\left( A(g)^2 \right)=\overline{L}_\beta\left(A(g)^2\right)=0$.

On the other hand, 
by (\ref{eqn:contact vector field}) we have 
\begin{eqnarray}
 \label{eqn:derivative of acted point}
%\lefteqn{
\left.\frac{\mathrm{d}}{\mathrm{d}\tau}\widetilde{\mu}_{\varphi_\tau(g)^{-1}}(x_{1})\right|_{\tau=0}& = &
\left.\frac{\mathrm{d}}{\mathrm{d}\tau}\widetilde{\mu}_{e^{-\tau\,\beta}}\left(\widetilde{\mu}_{g^{-1}}(x_{1})\right)\right|_{\tau=0}
%}
\\
&=& -\beta_X\left(\widetilde{\mu}_{g^{-1}}(x_{1})\right)\nonumber\\
&=&-\beta_M\left(\widetilde{\mu}_{g^{-1}}(m_{x_{1}})\right)^\sharp + \langle\Phi_G\left(\mu_{g^{-1}}(m_{x_{1}})\right),\beta\rangle \,
\partial_\theta.\nonumber
\end{eqnarray}

For $\varrho(g)\neq 0$ we have 
$\mathrm{dist}_X\left(\widetilde{\mu}_{g^{-1}}(x_{1}),x_2\right)\le 2\,\delta %),\quad
%\mathrm{dist}_X\left(x_{2k},x\right)=O(1/k)
$;
%on the support of $\varrho'$; 
therefore, %for $k\gg 0$,
\begin{equation}
 \label{eqn:moment map non zero}
\langle\Phi_G\left(\mu_{g^{-1}}(m_{x_{1}})\right),\beta\rangle =
\langle\Phi_G\left(m_{x_2}\right),\beta\rangle +O(\delta).
\end{equation}
If $\delta$ is sufficiently small, (\ref{eqn:moment map non zero}) is non-zero, since
we are assuming that $\Phi_G(m_{x_2})$ is not anti-diagonal;
assuming to fix ideas that $\Phi_G(m_{x_2})$ is diagonal,
then the right hand side of (\ref{eqn:moment map non zero}) is
$\langle\Phi_G\left(m_{x_2}\right),\beta\rangle=
2\,\lambda (m_x) +O(\delta)$.

In addition, by the discussion in \S \ref{sctn:szego fio}, where $\varrho(g)\neq 0$ 
\begin{equation}
 \label{eqn:diff psi near diagonal}
\mathrm{d}_{\left(\widetilde{\mu}_{g^{-1}}(x_{1}),x_{2}\right)}\psi = 
\left(\alpha_{\widetilde{\mu}_{g^{-1}}}(x_{1}),-\alpha_{x_{2}}\right)+O(\delta).
\end{equation}

Therefore,
\begin{eqnarray}
 \label{eqn:xi derivative of psi}
L_\beta \big(\Psi_{b}(x_{1k};u,g)\big) & = & 2\,\big[ u\cdot \lambda (m_x) + b\big] +O(\delta).
\end{eqnarray}
For $u\gg 0$, we conclude that 
$
L_\beta \big(\Psi_{a/k}(u,g)\big) \ge C'\cdot u +1
$
for some $C'>0$, which can be chosen uniformly for 
all $a\in \{-\ell,\ldots,0,\ldots,\ell\}$; 
by iteratively \lq integrating by parts\rq\, in (\ref{eqn:szego rescaled proj 1jJacobi}) 
by the transpose operator $L_\beta^t=-\overline{L}_\beta$,
we conclude that
%\lefteqn{
$\Pi_{k \boldsymbol{\nu} }
\big (x_{1} , x_{2} \big)_{2,a}'=O\left(k^{-\infty}\right)$
uniformly for $a\in \{-\ell,\ldots,\ell\}$.
Since this holds uniformly for each of the $k\,\nu$ summands
in (\ref{eqn:szego rescaled proj 1ja}), the statement of Lemma \ref{lem:Pik2prime}
is established in the case where $k\,\nu$ is odd.

The case where $k\,\nu$ is even is only slightly different - one takes $\ell$ to be half-integer
(see Theorem 11.7.1 of \cite{rt}).
\end{proof}

\begin{lem}
 \label{lem:Pik2primeprime}
$\Pi_{k \boldsymbol{\nu} }
\big (x_{1} , x_{2} \big)_2''=O\left(k^{-\infty}\right)$ as $k\rightarrow +\infty$.
\end{lem}

\begin{rem}
\label{rem:Pik2primeprime}
 In the integration defining $\Pi_{k \boldsymbol{\nu} }
\big (x_{1} , x_{2} \big)_2''$, $\widetilde{\mu}_{g^{-1}}( x_{1} )$ is close to $x_{2}$,
$g$ is at a positive distance from $\{\pm I_2\}$, and $u$ is very large.
\end{rem}

\begin{proof}
 [Proof of Lemma \ref{lem:Pik2primeprime}]
The proof is similar to the one of Lemma \ref{lem:Pik2prime}, except that we shall use 
eigenvalues rather than matrix elements, so we'll be somewhat sketchy.

If $g\in G\setminus \{\pm I_2\}$, then there is a unique 
$\vartheta_G(g)=\cos^{-1}\big(\mathrm{trace}(g)/2\big)\in (0,\pi)$ such that
the eigenvalues of $g$ are $e^{\pm \imath\,\vartheta_G (g)}$.
The map $g\in G\setminus \{\pm I_2\}\mapsto \vartheta_G(g)\in (0,\pi)$ is $\mathcal{C}^\infty$.
On the same domain, the character of $V_{k\nu}$ is thus given by
\begin{equation}
 \label{eqn:character by eigenvalues}
\chi_{k\boldsymbol{\nu}} (g)= 
\sum_{j=0}^{k\,\nu-1}e^{\imath\,(k\nu-1-2j)\,\vartheta_G (g)}.
\end{equation}

In place of (\ref{eqn:szego rescaled proj 1ja}), we shall now write
\begin{eqnarray}
\label{eqn:szego rescaled proj 2ja}
%\lefteqn{
\Pi_{k \boldsymbol{\nu} }
\big (x_{1} , x_{2} \big)_2''
%} \\
& \sim &
\sum_{j=0}^{k\,\nu-1}\Pi_{k \boldsymbol{\nu} }
\big (x_{1} , x_{2} \big)_{2,j}'',
\end{eqnarray}
where 
\begin{eqnarray}
\label{eqn:szego rescaled proj 2jb}
%\lefteqn{
\Pi_{k \boldsymbol{\nu} }
\big (x_{1} , x_{2} \big)_{2,j}''
%} \\
& := &
k^2\nu \, \int_G \, \mathrm{d} V_G (g) \,\int_{D/2}^{+\infty}\,\mathrm{d}u\,\left[e^{
\imath\,k\,u\cdot \psi\left( \widetilde{\mu}_{g^{-1}}( x_{1} ),
x_{2} \right)
} \right.\\
&&\left.\cdot e^{-\imath\,(k\nu-1-2j)\cdot \vartheta_G (g)} \cdot\varrho''(g)\cdot\rho_2'(u)
\cdot s \left( \widetilde{\mu}_{g^{-1}}( x_{1} ),
x_{2}, k\,u \right) \right]\nonumber\\
& = & 
k^2\nu \, \int_G \, \mathrm{d} V_G (g) \,\int_{D/2}^{+\infty}\,\mathrm{d}u\,\left[e^{
\imath\,k\,\Gamma_{(1+2j)/k}(x_{1},x_2;u,g)} \right.\nonumber\\
&&\left.\cdot \varrho''(g)\cdot\rho_2'(u)
\cdot s \left( \widetilde{\mu}_{g^{-1}}( x_{1} ),
x_{2}, k\,u \right) \right], \nonumber
\end{eqnarray}
where we have set for $b\in \mathbb{R}$
\begin{equation}
 \label{eqn:Gamma jk}
\Gamma_{b}(x_{1},x_2;u,g) :=u\,\psi\left( \widetilde{\mu}_{g^{-1}}( x_{1} ),
x_{2} \right)-(\nu+b)\cdot \vartheta_G(g).
\end{equation}
Again, the phases $\Gamma_{(1+2j)/k}(x_{jk};\cdot,\cdot)$ vary in a bounded family.

Furthermore, since $\delta$ is small but fixed, $\vartheta_G$ is   
bounded in $\mathcal{C}^r$ norm for every $r\ge 0$ on the support of $\varrho''$.
We can then complete the proof by arguing as in the final part of the proof of Lemma 
\ref{lem:Pik2prime}.

\end{proof}

Given (\ref{eqn:Pik2primeprime}), Lemmata \ref{lem:Pik2prime} and \ref{lem:Pik2primeprime}
imply that for $
\Pi_{k \boldsymbol{\nu} }
\big (x_{1} , x_{2} \big)_2
=O\left(k^{-\infty}\right)$ for $k\rightarrow +\infty$.

\textbf{Case 2: $0<u\ll 1$.}

Let $\rho_1'':(0,+\infty)\rightarrow \mathbb{R}$ be $\mathcal{C}^\infty$, $\ge 0$, 
$\equiv 0$ on $(0,1/D)$ and $\equiv 1$ on $(2/D,+\infty)$. Let us set 
$\rho_2'':=1-\rho_2'$. 
By the above, we can replace (\ref{eqn:Pik2primeprime}) by 
\begin{equation}
 \label{eqn:Pik2small}
\Pi_{k \boldsymbol{\nu} }
\big (x_{1} , x_{2} \big)\sim \Pi_{k \boldsymbol{\nu} }
\big (x_{1} , x_{2} \big)_1=\Pi_{k \boldsymbol{\nu} }
\big (x_{1} , x_{2} \big)_{11}+
\Pi_{k \boldsymbol{\nu} }
\big (x_{1} , x_{2} \big)_{12},
\end{equation}
where $\Pi_{k \boldsymbol{\nu} }
\big (x_{1} , x_{2} \big)_{1j}$ is defined as in (\ref{eqn:szego rescaled proj 1j0}),
except that in the integrand $\rho_j'(u)$ is replaced by 
$\rho'_1(u) \cdot \rho''_j(u)$.

\begin{lem}
 \label{lem:Pik12prime}
$\Pi_{k \boldsymbol{\nu} }
\big (x_{1} , x_{2} \big)_{12}=O\left(k^{-\infty}\right)$ as $k\rightarrow +\infty$.
\end{lem}

\begin{proof}
[Proof of Lemma \ref{lem:Pik12prime}]
Let us rewrite $\Pi_{k \boldsymbol{\nu} }
\big (x_{1k} , x_{2k} \big)_{12}$ by means of the Weyl integration and character formulae,
as in \S \ref{sctn:weylint&char}.
Introducing coordinates on $T$ in (\ref{eqn:equiv_projector2}), we obtain
\begin{eqnarray}
 \label{eqn:Piknu 12 weyl}
\lefteqn{\Pi_{k \boldsymbol{\nu} }
\big (x_{1} , x_{2} \big)_{12}} \\
& = & \frac{k^2\cdot \nu}{2\,\pi} \cdot 
\int_{G/T}\,\mathrm{d}V_{G/T}(gT)\,\int_{-\pi}^{\pi}\,\mathrm{d}\vartheta \,
\left[e^{\imath\,k\,\Psi(x_{1},x_2;u,gT,\vartheta)}
\cdot \rho_2'(u)\right.\nonumber\\
&&\left.   \cdot\varrho''\left(g\,e^{-\vartheta\,\beta}\,g^{-1}\right)\cdot\left(e^{\imath\,\vartheta}-e^{-\imath\,\vartheta}\right)
\cdot s \left( \widetilde{\mu}_{g\,e^{-\vartheta\,\beta}\,g^{-1}}( x_{1} ),
x_{2}, k\,u \right)    \right],\nonumber
\end{eqnarray}
where we have set 
\begin{eqnarray}
 \label{eqn:phasePsijk}
\Psi(x_{1},x_2;u,gT,\vartheta):= u\cdot \psi\left( \widetilde{\mu}_{g\,e^{-\vartheta\,\beta}\,g^{-1}}( x_{1k} ),
x_{2k} \right)-
\nu\cdot \vartheta
\end{eqnarray}
Thus if $D\gg 0$ on the support of $\rho_2'(u)$ we have 
$$
\partial_\vartheta\Psi(x_{1},x_2;u,gT,\vartheta)\le -\frac{\nu}{2};
$$
the claim then follows in a standard manner by iteratively integrating by parts in $\mathrm{d}\vartheta$.
\end{proof}

We conclude that
%\begin{equation}
% \label{eqn:Pik11only}
$\Pi_{k \boldsymbol{\nu} }
\big (x_{1} , x_{2} \big)\sim \Pi_{k \boldsymbol{\nu} }
\big (x_{1} , x_{2} \big)_{11}$.
%\end{equation}
%
To complete the proof of Proposition \ref{prop:compact support u}, we need only factor
$\rho(u)=\rho'_1(u) \cdot \rho''_1(u)$.
\end{proof}

\begin{rem}
\label{rem:variant}
Let $\mathbf{v}_j\in T_{m_x}M$ be as in the statement of Theorem \ref{thm:rescaled asymptotics},
and set 
$
x_{jk}:=x+k^{-1/2}\, \mathbf{v}_j$.
 As a variant of Proposition \ref{prop:compact support u}, we can replace $(x_1,x_2)$
by $(x_{1k},x_{2k})$. The same arguments apply with minor modifications; in particular,
one will replace $G_{<\delta}(x_1,x_2)$ in (\ref{eqn:Gdelta}) by a $\delta$-neighborhood of the stabilizer 
subgroup $G_x$.
\end{rem}

\section{The proofs}
\label{sctn:proofs}
We collect in this Section the proofs of 
Theorems \ref{thm:rapid decrease}, \ref{thm:G_xnotinZx}, \ref{thm:rescaled asymptotics}.

\subsection{Theorem \ref{thm:rapid decrease}}
\label{sctn:proof rapid decrease}

\begin{proof}
 [Proof of Theorem \ref{thm:rapid decrease}]
We can replace $x$ and $y$ by any other points in their respective orbits, 
and therefore we may assume without loss that $\Phi_G\circ \pi(y)$ is diagonal, in form
(\ref{eqn:defn of hm}). 
By Proposition \ref{prop:compact support u} (with $x_1=x$ and $x_2=y$), we have
\begin{eqnarray}
\label{eqn:szego rescaled proj 1 for rapid decrease}
%\lefteqn{
\Pi_{k \boldsymbol{\nu} }
\big (x , y \big)
%} \\
& \sim &
k^2\nu \, \int_G \, \mathrm{d} V_G (g) \,\int_0^{+\infty}\,\mathrm{d}u\\
&&\left[\rho(u)\cdot \varrho(g)\cdot\overline{\chi_{k\boldsymbol{\nu}} (g)} \,
e^{
\imath\,k\,u\,\psi\left( \widetilde{\mu}_{g^{-1}}( x ),
y \right)
} \cdot s \left( \widetilde{\mu}_{g^{-1}}( x ),
y, k\,u \right) \right],\nonumber
\end{eqnarray}
where $\varrho\in \mathcal{C}^\infty(G)$ is a bump function supported where 
$\mathrm{dist}_X\left(\widetilde{\mu}_{g^{-1}}(x),y\right)\le 2\,\delta$,
and identically $\equiv 1$ where $\mathrm{dist}_X\left(\widetilde{\mu}_{g^{-1}}(x),y\right)\le \delta$,
while $\rho$ is as in Proposition \ref{prop:compact support u}.

Where $\mathrm{dist}_X(G\cdot x,y)\ge C\,k^{\epsilon-1/2}$, by Corollary 1.3 of \cite{bs} we have
\begin{equation}
 \label{eqn:bound on psi}
\left|\psi\left(\widetilde{\mu}_{g^{-1}}(x),y\right)\right|\ge \Im \left(\psi\left(\widetilde{\mu}_{g^{-1}}(x),y\right)\right|
\ge C_1\,k^{2\,\epsilon-1}
\end{equation}
for some constant $C_1>0$.

The statement of Theorem \ref{thm:rapid decrease} then follows by iteratively integrating by parts
in $\mathrm{d}u$, since at each step we introduce a factor $O\left(k^{-2\epsilon}\right)$.

\end{proof}

For expository reasons, we shall give the proof
of Theorem \ref{thm:rescaled asymptotics} before the one of Theorem \ref{thm:G_xnotinZx}.

\subsection{Theorem \ref{thm:rescaled asymptotics}}
\label{sctn:proof rescaled}

\begin{proof}
 [Proof of Theorem \ref{thm:rescaled asymptotics}]

By (\ref{eqn:diagonal invariance}),
we may assume without loss that $\Phi_G(m_x)$ is not antidiagonal.
Let $x_{jk}$ be as in Remark \ref{rem:variant}.
By the discussion in \S \ref{sctn:compact reduction},
\begin{eqnarray}
\label{eqn:Piknu and szego1}
\lefteqn{
\Pi_{k \boldsymbol{\nu} }
\big (x_{1k} , x_{2k} \big)
} \\
& \sim &
\frac{k^2\nu }{2\pi}\, \int_{1/D}^{D}\,\mathrm{d}u \,\int_{-\pi/2}^{3\pi/2} \,\mathrm{d}\vartheta \,\int_{G/T} \, \mathrm{d}V_{G/T} (gT)
\,\left[e^{\imath\, k\, \left[u\, \psi \left( \widetilde{\mu}_{g e^{-\imath \vartheta B} g^{-1}}( x_{1k} ),
x_{2k} \right) -\nu\, \vartheta\right]} \right.\nonumber\\
&&\left.\cdot \rho(u)\cdot \varrho \left(g e^{-\imath \vartheta B} g^{-1}\right)\cdot \left( e^{\imath\vartheta}-e^{-\imath\,\vartheta}\right)
\cdot
s\left( \widetilde{\mu}_{g e^{-\imath \vartheta B} g^{-1}}( x_{1k} ),
x_{2k}, k\, u \right)
\right] .\nonumber
\end{eqnarray}
The bump function $\varrho$ is supported near $G_x=Z_x\leqslant\{\pm I_2\}$ (Remark \ref{rem:variant}).
Hence, 
$\varrho=\varrho_+ +\varrho_ -$,
where $\varrho_+$ is supported in a small neighborhood of $I_2$, while $\varrho_-$
is supported in a small neighborhood of $-I_2$, and vanishes identically if $G_x$
is trivial. 
Writing $\varrho=\varrho_+ +\varrho_ -$ in (\ref{eqn:Piknu and szego1}), we obtain (with an obvious interpretation) 
\begin{equation}
 \label{eqn:Pik decomposed 0pi}
\Pi_{k \boldsymbol{\nu} }
\big (x_{1k} , x_{2k} \big) \sim \Pi_{k \boldsymbol{\nu} }
\big (x_{1k} , x_{2k} \big)_+ + \Pi_{k \boldsymbol{\nu} }
\big (x_{1k} , x_{2k} \big)_- ;
\end{equation}
let us examine the two summands in (\ref{eqn:Pik decomposed 0pi}) separately.

\subsubsection{The asymptotics of $\Pi_{k \boldsymbol{\nu} }
\big (x_{1k} , x_{2k} \big)_+$}

Integration in $\mathrm{d}V_G(g)$ is supported in a small neighborhood of $I_2$.

\begin{prop}
\label{prop:rescaled case 0}
Under the assumptions of Theorem \ref{thm:rescaled asymptotics},
as $k\rightarrow +\infty$ we have 
\begin{eqnarray*}
%\lefteqn{
\Pi_{k\boldsymbol{\nu}}\left(x_{1k} , x_{2k}\right)_+
%}\\
& \sim &  \frac{1}{2\,\lambda (m_x)} \cdot \left(\frac{\nu \,k }{2\,\pi\,\lambda (m_x)}\right)^d \cdot 
e^{u_0(\nu,m_x)\cdot \psi_2 \left(\mathbf{v}_1 , \mathbf{v}_2 \right) }\\
&& \cdot 
\left[1+ \sum_{j\ge 1}^{+\infty} k^{-j/2} A^+_j(x;\mathbf{v}_1, \mathbf{v}_2)\right],
\end{eqnarray*}
where
$A^+_j(x;\cdot, \cdot)$ is a polynomial of degree $\le 3\,j$ and parity $(-1)^j$.
\end{prop}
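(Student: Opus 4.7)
The plan is to apply stationary phase to the oscillatory integral (\ref{eqn:Piknu and szego1}) for $\Pi_{k\boldsymbol{\nu}}(x_{1k},x_{2k})_+$, where $\varrho$ is replaced by $\varrho_+$ and the integration localizes near the identity in $G$. I would parameterize $gT$ near $[I_2]$ by $\eta$ in a $T$-invariant complement $\mathfrak{m}\subset\mathfrak{g}$ to $\mathfrak{t}$ via $g=\exp(\eta)$, and substitute the Boutet de Monvel-Sj\"{o}strand FIO representation of the Szeg\"{o} kernel, using SZ's description of $\psi$ in Heisenberg local coordinates. The phase is
\[
\Psi(u,\vartheta,\eta;\mathbf{v}_1,\mathbf{v}_2)=u\,\psi\!\bigl(\widetilde{\mu}_{\exp(-\vartheta\,\mathrm{Ad}_{\exp\eta}(\beta))}(x_{1k}),\,x_{2k}\bigr)-\nu\,\vartheta,
\]
and a direct calculation using $\alpha_x(\xi_X(x))=-\langle\Phi_G(m_x),\xi\rangle$ together with (\ref{eqn:defn of hm1}) locates the critical set, at $\mathbf{v}_j=0$, as the $2$-dimensional Morse-Bott submanifold $\{(u_0(\nu,m_x),0,\eta):\eta\in\mathfrak{m}\}$: $\partial_u\Psi=\psi=0$ forces $\vartheta=0$ (by local freeness of $\widetilde{\mu}$ and the $\varrho_+$ localization), and $\partial_\vartheta\Psi|_{\vartheta=0}=0$ gives $u=u_0+O(\|\eta\|^2)$, where the $O(\|\eta\|^2)$ error arises because $[\mathfrak{m},\mathfrak{t}]\subseteq\mathfrak{m}$ is orthogonal to $\mathfrak{t}$.

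To unfold this critical manifold I would introduce the rescalings $\vartheta=\vartheta'/\sqrt{k}$, $u=u_0+u'/\sqrt{k}$, and the given $\mathbf{v}_j\to\mathbf{v}_j/\sqrt{k}$, while keeping $\eta$ as a \lq slow\rq\, integration variable. Expanding $k\Psi$ to quadratic order in the rescaled variables, using SZ's expansion $\psi(x+(\theta_1,\mathbf{w}_1),x+(\theta_2,\mathbf{w}_2))=(\theta_1-\theta_2)+\psi_2(\mathbf{w}_1,\mathbf{w}_2)+O(\|\mathbf{w}\|^3)$ and the formula $\widetilde{\mu}_{\exp(-\vartheta\,\mathrm{Ad}_{\exp\eta}(\beta))}(x)=x+(2\lambda(m_x)\vartheta,\,\vartheta\,[\eta,\beta]_M(m_x))+O(\vartheta^2)$, yields a leading rescaled phase of the form
\[
k\,\Psi = u_0\,\psi_2(\mathbf{v}_1,\mathbf{v}_2)+2\lambda(m_x)\,\vartheta'\,u' - \frac{u_0}{2}(\vartheta')^2\,\|[\eta,\beta]_M(m_x)\|^2+O(k^{-1/2}),
\]
where the transversality hypothesis on $\mathbf{v}_j$ ensures that the mixed terms between $(u',\vartheta')$ and $\mathbf{v}_j$ (which are controlled by inner products of $\mathbf{v}_j$ against $[\eta,\beta]_M(m_x)$ and its symplectic dual) drop out at leading order, leaving the transverse factor $e^{u_0\,\psi_2(\mathbf{v}_1,\mathbf{v}_2)}$ intact.

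A complex stationary phase in $(u',\vartheta')$ with Hessian matrix $\begin{pmatrix}0 & 2\lambda(m_x)\\ 2\lambda(m_x) & -u_0\,\|[\eta,\beta]_M(m_x)\|^2\end{pmatrix}$ (determinant $-4\lambda(m_x)^2$, independent of $\eta$) produces a contribution of order $k^{-1}$; however, the Weyl factor $e^{\imath\vartheta}-e^{-\imath\vartheta}\sim 2\imath\vartheta'/\sqrt{k}$ vanishes at the critical point, so one must apply the stationary phase formula with amplitude vanishing linearly and extract the next term, gaining an extra $k^{-1/2}$. Combining this with the $k^2\nu/(2\pi)$ prefactor, the SZ amplitude $s_0\sim (u_0\,k)^d/\pi^d$, and the integration over $\eta$ against the $G/T$-Haar density at $[I_2]$, yields precisely the claimed leading coefficient $\tfrac{1}{2\lambda(m_x)}\cdot(\nu k/(2\pi\lambda(m_x)))^d$. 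The lower-order terms arise from the standard Borel-summation procedure in the stationary phase expansion; the parity and degree bounds on $A^+_j$ follow from the involution $(\vartheta',u')\mapsto-(\vartheta',u')$ combined with the fact that each $k^{-1/2}$ introduces at most three Taylor monomials in the phase or amplitude. The principal obstacle will be the careful bookkeeping of numerical factors---in particular, reconciling the dependence on $\eta$ of the normal Hessian with the $G/T$-volume integration so as to produce the exact $\lambda(m_x)^{-(d+1)}$ denominator claimed in the Proposition.
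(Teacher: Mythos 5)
Your proposal diverges from the paper's argument and contains genuine gaps.

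The most concrete error is in the expansion formula
$\widetilde{\mu}_{\exp(-\vartheta\,\mathrm{Ad}_{\exp\eta}(\beta))}(x)=x+(2\lambda(m_x)\vartheta,\,\vartheta\,[\eta,\beta]_M(m_x))+O(\vartheta^2)$. The horizontal component of the infinitesimal displacement at $\vartheta=0$ is $-\mathrm{Ad}_{\exp\eta}(\beta)_M(m_x)=-\big(\beta_M(m_x)+[\eta,\beta]_M(m_x)+O(\|\eta\|^2)\big)$, which does \emph{not} vanish at $\eta=0$; you have dropped the leading $\beta_M$ term. Consequently, the coefficient of $(\vartheta')^2$ in your rescaled phase should be governed by $\big\|\mathrm{Ad}_{\exp\eta}(\beta)_X(x)\big\|^2$, which is bounded below (by local freeness), not by $\big\|[\eta,\beta]_M(m_x)\big\|^2$, which vanishes at $\eta=0$. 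With the correct coefficient the $\vartheta'$-integrand has uniform Gaussian decay at $\eta=0$; with yours it does not, and the critical-point analysis breaks down precisely where you expect the main contribution.

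The more structural gap concerns the treatment of the $G/T$ direction. Your rescaling $u=u_0+u'/\sqrt{k}$ keeps the rescaled phase bounded only at $\eta=0$: for $\eta\neq 0$ the stationary value of $u$ is $\nu/\big\langle\Phi_G(m_x),\mathrm{Ad}_{\exp\eta}(\beta)\big\rangle$, which differs from $u_0$ by a quantity of order $\|\eta\|^2$, so the phase retains an $O(\sqrt{k})$ term proportional to $\vartheta'\cdot\|\eta\|^2$. After doing stationary phase in $(u,\vartheta)$ with $\eta$ as a \emph{fixed} parameter, you obtain a contribution at every $\eta$ for which $\nu/\big\langle\Phi_G(m_x),\mathrm{Ad}_{\exp\eta}(\beta)\big\rangle$ lies in the support of $\rho$; this is an open region in $G/T$ of uniformly positive measure, not a neighborhood shrinking to a point. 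The assertion that the $\eta$-integral is simply evaluated ``against the $G/T$-Haar density at $[I_2]$'' is therefore unjustified: nothing in your argument localizes the $G/T$-integral to a single coset. The paper handles exactly this difficulty by a different mechanism: passing to the coordinate $t=\cos(2\theta)$ on $G/T$, the phase takes the exact bilinear form $\Gamma(t;u,\vartheta)=\vartheta\big[2\lambda(m_x)\,u\,t-\nu\big]$, and the extra factor $\vartheta\cdot u$ coming from the Weyl discriminant and the amplitude is absorbed as $\partial_t e^{\imath\sqrt{k}\Gamma}/(\sqrt{k}\,\lambda)$, so an integration by parts in $t$ produces a boundary term at $t=1$ (the coset $h_{m_x}T$), plus a bulk term shown to be of lower order. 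It is this integration by parts---not a stationary-phase collapse of the $\eta$-integral---that isolates the coset $h_{m_x}T$ and produces the clean leading coefficient $\tfrac{1}{2\lambda(m_x)}\,\big(\nu k/(2\pi\lambda(m_x))\big)^d$. You would need to either reproduce this identity or supply an equivalent localization argument before the numerology you announce can be justified.
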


\begin{proof}
 [Proof of Proposition \ref{prop:rescaled case 0}]
$\Pi_{k \boldsymbol{\nu} }
\big (x_{1k} , x_{2k} \big)_+$ is given by (\ref{eqn:Piknu and szego1}), 
with the the cut-off $\varrho \left(g e^{-\imath \vartheta B} g^{-1}\right)$
replaced by $\varrho_+ \left(g e^{-\imath \vartheta B} g^{-1}\right)$; therefore,
integration in $\mathrm{d}\vartheta$ is restricted to $(-2\,\delta,2\,\delta)$; 
we may assume that $\varrho_+ \left(g e^{-\imath \vartheta B} g^{-1}\right)$
is identically equal to one for $\vartheta\in (-\delta,\delta)$. 

Let us fix constants $C_1>0$, $\epsilon_1\in (0,1/6)$. 
Iteratively integrating by parts in $\mathrm{d}u$, similarly to the proof 
of Theorem \ref{thm:rapid decrease}, we conclude that  
the locus where $|\vartheta|>C_1 \, k^{\epsilon_1-1/2}$ 
contributes negligibly to the asymptotics of $\Pi_{k \boldsymbol{\nu} }
\big (x_{1k} , x_{2k} \big)_0$. 
Hence we conclude the following.

\begin{lem}
 \label{lem:shrinking support vartheta}
Suppose that $\varrho_1\in \mathcal{C}_c(\mathbb{R})$ is $\ge 0$, supported in $\big(-2, 2\big)$,
and $\equiv 1$ on $(-1, 1)$. Then 
the asymptotics of $\Pi_{k \boldsymbol{\nu} }
\big (x_{1k} , x_{2k} \big)_+$ are unchanged, if the integrand is multiplied 
by $\varrho_1\left(k^{1/2-\epsilon_1}\,\vartheta\right)$. 
\end{lem}

Applying the rescaling $\vartheta \mapsto \vartheta/ \sqrt{k}$, we recover

\begin{eqnarray}
\label{eqn:Piknu and szego2}
\lefteqn{
\Pi_{k \boldsymbol{\nu} }
\big (x_{1k} , x_{2k} \big)_+
} \\
&\sim & \frac{k^{3/2}\,\nu }{2\pi}\, \int_0^{+\infty}\,\mathrm{d}u \,
\int_{-\infty}^{+\infty} \,\mathrm{d}\vartheta \,
\int_{G/T} \, \mathrm{d}V_{G/T} (gT)
\left[e^{\imath\, k\, \Psi^+_k} \cdot \varrho_1\left(k^{-\epsilon_1}\,\vartheta\right)\cdot \rho  ( u ) \right.
\nonumber\\
&&\left.\cdot
\left( e^{\imath\vartheta/\sqrt{k}}-e^{-\imath\,\vartheta/\sqrt{k}}\right)\cdot 
s\left( \widetilde{\mu}_{g e^{-\imath \vartheta B/\sqrt{k}} g^{-1}}( x_{1k} ),
x_{2k}, k\, u \right)
\right] \nonumber,
\end{eqnarray}
where
\begin{equation}
 \label{eqn:defn di Psik}
\Psi_k^+ (u, \mathbf{v}_1 , \mathbf{v}_2 ,\vartheta, g\,T)  := u\, \psi \left( \widetilde{\mu}_{g e^{-\imath \vartheta B/\sqrt{k}} g^{-1}}( x_{1k} ),
x_{2k} \right) -\frac{\vartheta}{\sqrt{k}}\, \nu.
\end{equation}
Integration in $\mathrm{d}\vartheta$ is over an interval of length $4\, k^{\epsilon_1}$ centered at the origin.

Let us make $\Psi_k$ more explicit. By Corollary 2.2 of \cite{pao-IJM}, with $m_x=\pi(x)$ we have
\begin{eqnarray}
 \label{eqn:HLCaction}
%\lefteqn{
\lefteqn{\widetilde{\mu}_{g e^{-\imath \vartheta B/\sqrt{k}} g^{-1}}( x_{1k} )
= \widetilde{\mu}_{e^{-\vartheta \mathrm{Ad}_g( \beta)/\sqrt{k}} }( x_{1k} ) }\\
& = & 
x + \left ( \Theta_k ( \mathbf{v}_1 ,  \vartheta , g\,T), 
\frac{1}{\sqrt{k}} \,\mathrm{V}(\mathbf{v}_1,  \vartheta , gT)
+R_2 \left(\frac{1}{\sqrt{k}} \, \vartheta , \frac{1}{\sqrt{k}} \, \mathbf{v}_1\right)\right),\nonumber
\end{eqnarray}
where (for appropriate $R_3$ and $R_2$)
\begin{eqnarray}
 \label{eqn:defn of Thetak}
%\lefteqn{
\Theta_k ( \upsilon_1 , \vartheta , g\,T) 
%} \\
& : = &
\frac{1}{\sqrt{k}} \, 
%\Big[ \theta_1 + 
\vartheta \cdot
\Big\langle\Phi_G (m_x),  \mathrm{Ad}_g( \beta) \Big\rangle 
%\Big] 
\\
&& + \frac{1}{k}\, \vartheta\cdot \omega_{ m_x } \big( \mathrm{Ad}_g( \beta)_M (m), \mathbf{v}_1 \big)  
+R_3 \left(\frac{1}{\sqrt{k}} \, \vartheta , \frac{1}{\sqrt{k}} \, \mathbf{v}_1\right),
\nonumber
\end{eqnarray}
%and 
\begin{eqnarray}
 \label{eqn:defn of Vk}
%\lefteqn{
\mathrm{V}(\mathbf{v}_1, \vartheta , gT)
%} \\
 : = 
\mathbf{v}_1- \vartheta \,\mathrm{Ad}_g\big(\beta\big)_M(m_x).
\end{eqnarray}
We shall use the abridged notation $\Theta_k$ and $V$.

In abridged notation, let us set 
\begin{eqnarray}
 \label{eqn:defn of Thetaktilde}
\widetilde{\Theta}_k ( \upsilon_1 , \upsilon_2 , \vartheta , g\,T)
& : = & 
\frac{1}{\sqrt{k}} \, A+ \frac{1}{k}\, B
+R_3 \left(\frac{1}{\sqrt{k}} \, \vartheta , \frac{1}{\sqrt{k}} \, \mathbf{v}_1\right),
\nonumber
\end{eqnarray}
where
\begin{equation}
 \label{eqn:defn di A}
A = A ( \mathbf{v}_1 , \mathbf{v}_2 , \vartheta , g\,T)
:= 
%(\theta_1 -\theta_2 ) + 
\vartheta \cdot 
\Big\langle\Phi_G (m_x),  \mathrm{Ad}_g( \beta) \Big\rangle ,
\end{equation}
\begin{equation}
 \label{eqn:defn di B}
 B=B  ( \mathbf{v}_1 , \vartheta , g\,T) := \vartheta\cdot \omega_{ m_x } \big( \mathrm{Ad}_g( \beta)_M (m), \mathbf{v}_1 \big) .
\end{equation}
Then 
\begin{eqnarray}
 \label{eqn:Psi_kexpanded}
%\lefteqn{ 
\Psi_k^+ (u, \mathbf{v}_1 , \mathbf{v}_2 ,\vartheta, g\,T) 
%} \\
& = &   \imath \, u \,  \left[ 1 - e^{ \imath \, \widetilde{\Theta}_k}\right]  -\frac{\vartheta}{\sqrt{k}}\, \nu 
-\imath \, \frac{ u }{ k } \, \psi_2 \Big( V ,
\mathbf{v}_2\Big) \nonumber \\
&& +R_3\left(\frac{1}{\sqrt{k}} \, (\vartheta ,  \mathbf{v}_1, \mathbf{v}_2 )\right).
\end{eqnarray}
In view of \S 3 of \cite{sz} (see especially (65)), 
by a few computations we obtain the following.

\begin{lem}
 \label{lem:expansion of Psik}
We have
\begin{eqnarray*}
 \Psi_k ^+(u, \mathbf{v}_1 , \mathbf{v}_2 ,\vartheta, g\,T) & : = & 
\frac{1}{ \sqrt{k} } \, \mathcal{G} (u, \vartheta, g\,T)
+ \frac{ 1 }{ k } \, \mathcal{D}  (u, \mathbf{v}_1 , \mathbf{v}_2 ,\vartheta, g\,T)\\
&& + R_3 \left(\frac{1}{\sqrt{k}} \, (\vartheta ,  \mathbf{v}_1 , \mathbf{v}_2 )\right),
\end{eqnarray*}
where 
\begin{eqnarray*}
%\lefteqn{ 
\mathcal{G} (u, \vartheta, g\,T)
& = & u \, A -\vartheta \, \nu 
%} 
\\
& = & 
\vartheta \cdot \left[ u \, 
%\Big( \theta_1 -\theta_2 + 
 \big \langle \Phi_G ( m_x ), \mathrm{ Ad }_g ( \beta ) \big\rangle 
%\Big)
- \vartheta \right],
\end{eqnarray*}
\begin{eqnarray*}
%\lefteqn{ 
\mathcal{D}  (u, \mathbf{v}_1 , \mathbf{v}_2 ,\vartheta, g\,T)
& = & u \left[ B + \imath \left(\frac{ 1 }{ 2 } \, A^2 - \psi_2 \big( V, \mathbf{v}_2 \big) \right) \right].
\end{eqnarray*}

\end{lem}

From (\ref{eqn:Piknu and szego2}) and Lemma \ref{lem:expansion of Psik}, we conclude that
\begin{eqnarray}
\label{eqn:Piknu and szego3}
\lefteqn{
\Pi_{k \boldsymbol{\nu} }
\big (x_{1k} , x_{2k} \big)_+
} \\
&\sim & \frac{k^{3/2}\,\nu }{2\pi}\, \int_0^{+\infty}\,\mathrm{d}u \,
\int_{-\infty}^{+\infty} \,\mathrm{d}\vartheta \,\int_{G/T} \, \mathrm{d}V_{G/T} (gT)
\nonumber\\
&&\left[e^{\imath\, \sqrt{k}\, \mathcal{G}(u, \vartheta, g\,T) } \cdot e^{\imath \, u \, B 
+u \, \big[ \psi_2 \big( V, \mathbf{v}_2 \big) -\frac{ 1 }{ 2 } \, A^2     \big]}\cdot \varrho_1\left(k^{-\epsilon_1}\,\vartheta\right) 
\cdot  e^{\imath\,k\cdot R_3\left(\frac{1}{\sqrt{k}} \, (\vartheta ,  \mathbf{v}_1, \mathbf{v}_2 )\right)} \right.
\nonumber \\
&& \left.\cdot \rho  ( u )  \cdot \left( e^{\imath\vartheta/\sqrt{k}}-e^{-\imath\,\vartheta/\sqrt{k}}\right)
\cdot s\left( \widetilde{\mu}_{g e^{-\imath \vartheta B/\sqrt{k}} g^{-1}}( x_{1k} ),
x_{2k}, k\, u \right)
\right] \nonumber,
\end{eqnarray}
with $A$ and $B$ as in (\ref{eqn:defn di A}) and (\ref{eqn:defn di B}).

We can make (\ref{eqn:Piknu and szego3}) yet more explicit, introducing coordinates 
$(\theta , \delta )$ on $G/T$ as in \S \ref{sctn:local coordinates G/T}. 
%Thus we shall make the replacement 
%$$
%\int_{G/T} \, \mathrm{d}V_{G/T} (gT) \mapsto \frac{ 1 }{ 2\,\pi } \, 
%\int_0^{\pi / 2}\,\mathrm{ d }\theta \int_{-\pi}^{\pi}\, \mathrm{d} \delta\, \sin ( 2\theta).
%$$
Furthermore, let $h_m \,T\in G/T$ be as in (\ref{eqn:defn of hm}), and 
operate the change of variable $g\,T \mapsto h_m \,g\,T$ in $G/T$;
we shall write $g$ in the form (\ref{eqn:S2 and S3}) with $\alpha =\cos (\theta) \, e^{\imath \delta}$
and $\beta=\sin(\theta)$. 
Then 
\begin{eqnarray}
 \label{eqn:Gamma riscritta}
%\lefteqn{
\mathcal{G}(u, \vartheta, h_m g\,T)
%}\\
 & = &\vartheta \cdot   \left[ 
%\theta_1 -\theta_2 + 
u\cdot \left \langle \imath\,g^{-1}
\begin{pmatrix}
 \lambda (m_x) & 0 \\
0 & -\lambda (m_x)
\end{pmatrix}
\, g
,  \beta  \right\rangle - \nu  \right]
  \nonumber\\
& = & \vartheta \cdot \big[ 
%(\theta_1 - \theta_2)  +
u \cdot \cos(2\theta) \cdot 2 \,\lambda (m_x) - \nu \big]
.   
\end{eqnarray}

Let $\mathcal{G}'(u, \vartheta, \theta)$ denote the expression on the
last line of (\ref{eqn:Gamma riscritta}).
We can rewrite (\ref{eqn:Piknu and szego3}) in the following form:
\begin{eqnarray}
\label{eqn:Piknu and szego4}
\lefteqn{
\Pi_{k \boldsymbol{\nu} }
\big (x_{1k} , x_{2k} \big)_+
} \\
&\sim & \frac{k^{3/2}\,\nu }{4\pi^2}\, \int_0^{+\infty}\,\mathrm{d}u \,
\int_{-\infty}^{+\infty} \,\mathrm{d}\vartheta \,
\int_0^{\pi / 2}\,\mathrm{ d }\theta \int_{-\pi}^{\pi}\, \mathrm{d} \delta\, 
\nonumber\\
&&\left[e^{\imath\, \sqrt{k}\, \mathcal{G}'(u, \vartheta, \theta) } \cdot e^{\imath \, u \, B 
+u \, \big[ \psi_2 \big( V, \mathbf{v}_2 \big) -\frac{ 1 }{ 2 } \, A^2     \big]}\cdot \varrho_1\left(k^{-\epsilon_1}\,\vartheta\right) 
\cdot  e^{\imath\,k\cdot R_3\left(\frac{1}{\sqrt{k}} \, (\vartheta ,  \mathbf{v}_1, \mathbf{v}_2 )\right)} \right.
\nonumber \\
&& \left.\cdot \rho ( u ) \cdot \left( e^{\imath\vartheta/\sqrt{k}}-e^{-\imath\,\vartheta/\sqrt{k}}\right)
\cdot s\left( \widetilde{\mu}_{g e^{-\imath \vartheta B/\sqrt{k}} g^{-1}}( x_{1k} ),
x_{2k}, k\, u \right) \, \sin ( 2\theta)
\right] \nonumber,
\end{eqnarray}
where (with abuse of notation)
$g=g(\theta, \delta)$ and $A=A(\theta, \delta)$, $B=B(\theta,\delta)$ by the obvious change of variables.

Setting $t = \cos (2\theta)$, we can reformulate
(\ref{eqn:Piknu and szego4}) as follows. With slight abuse, let us write $g\,T=g(t,\delta)\,T$ and define
\begin{eqnarray}
 \label{eqn:defn di Gamma}
\Gamma(t; u, \vartheta) & := &
 \mathcal{G}'(u, \vartheta, \theta) %\nonumber\\
%&=&
=\vartheta \cdot \big[2 \,  \lambda (m_x) \cdot u \cdot
%(\theta_1 - \theta_2)  +
t  - \nu\big]
.
\end{eqnarray}
%(of course assuming that $t = \cos (2\theta)$).
Then
\begin{eqnarray}
\label{eqn:Piknu and szego5}
\lefteqn{
\Pi_{k \boldsymbol{\nu} }
\big (x_{1k} , x_{2k} \big)_+
} \\
&\sim & \frac{1}{2}\cdot\frac{k^{3/2}\,\nu }{(2\,\pi)^2}\, \int_0^{+\infty}\,\mathrm{d}u \,
\int_{-\infty}^{+\infty} \,\mathrm{d}\vartheta \,
\int_{-1}^{ 1 }\,\mathrm{ d }t \int_{-\pi}^{\pi}\, \mathrm{d} \delta\, 
\nonumber\\
&&\left[e^{\imath\, \sqrt{k}\, \Gamma (t; u, \vartheta) } \cdot 
e^{\imath \, u \, B_t 
+u \, \big[ \psi_2 \big( V_t, \mathbf{v}_2 \big) -\frac{ 1 }{ 2 } \, A_t^2     \big]}
\cdot \varrho_1\left(k^{-\epsilon_1}\,\vartheta\right) 
\cdot  e^{\imath\,k\cdot R_3\left(\frac{1}{\sqrt{k}} \, (\vartheta ,  \mathbf{v}_1, \mathbf{v}_2 )\right)} \right.
\nonumber \\
&& \left.\cdot \rho  ( u ) \cdot \left( e^{\imath\vartheta/\sqrt{k}}-e^{-\imath\,\vartheta/\sqrt{k}}\right)
\cdot s\left( \widetilde{\mu}_{g e^{-\imath \vartheta B/\sqrt{k}} g^{-1}}( x_{1k} ),
x_{2k}, k\, u \right) \right]; \nonumber
\end{eqnarray}
we have denoted by $A_t$, $B_t$ the functions
\begin{equation*}
 %\label{eqn:defn di AtBt}
A_t\big( \mathbf{v}_1 , \mathbf{v}_2 , \vartheta,\delta) = A_t ( \mathbf{v}_1 , \mathbf{v}_2 , \vartheta, g(t,\delta)\,T \big),
\quad 
 %\label{eqn:defn di Bt}
 B_t ( \mathbf{v}_1 , \vartheta , \delta)=B  ( \mathbf{v}_1 , \vartheta , g(t,\delta)\,T),
\end{equation*}
and similarly for $V_t$.

Let us remark that
\begin{eqnarray}
 \label{eqn:Delta term}
e^{\imath\vartheta/\sqrt{k}}-e^{-\imath\,\vartheta/\sqrt{k}} & = &
\frac{2\,\imath}{\sqrt{ k }}\cdot \vartheta \cdot 
\sum_{j=0}^{+\infty} \frac{ (-1) ^j}{(2j+1)!} \cdot \frac{ \vartheta ^{ 2j }}{ k^j } \\
& = & \frac{2\,\imath}{\sqrt{ k }}\cdot \vartheta \cdot 
\left [ 1 + R_2\left(\frac{\vartheta}{ \sqrt{k} } \right) \right]. \nonumber
\end{eqnarray}
Furthermore, working in HLC's, Taylor expansion yields an asymptotic expansion 
\begin{equation}
 \label{eqn:s term}
s\left( \widetilde{\mu}_{g e^{-\imath \vartheta B/\sqrt{k}} g^{-1}}( x_{1k} ),
x_{2k}, k\, u \right) \sim 
\left( \frac{k \, u}{\pi} \right)^d \cdot \left[ 1 
+ R_1\left(\frac{\vartheta}{ \sqrt{k} }, \frac{\mathbf{v}_j}{ \sqrt{k} } \right) \right].
\end{equation}

As a consequence, we have an asymptotic expansion 
\begin{eqnarray}
 \label{eqn:product asymptotic expansion}
\lefteqn{
e^{\imath\,k\cdot R_3\left(\frac{1}{\sqrt{k}} \, (\vartheta ,  \mathbf{v}_1, \mathbf{v}_2 )\right)} 
\cdot \left( e^{\imath\vartheta/\sqrt{k}}-e^{-\imath\,\vartheta/\sqrt{k}}\right)
\cdot s\left( \widetilde{\mu}_{g e^{-\imath \vartheta B/\sqrt{k}} g^{-1}}( x_{1k} ),
x_{2k}, k\, u \right)
}\nonumber\\
&\quad\quad\sim&\left( \frac{k \, u}{\pi} \right)^d \cdot \frac{2\,\imath}{\sqrt{ k }}\cdot \vartheta \cdot 
\left[1+\sum_{j\ge 1} k^{-j/2} \, P_j(x,u; \vartheta,\mathbf{v}_1,\mathbf{v}_2)\right],
\end{eqnarray}
where $P_j(x,u;\vartheta,\mathbf{v}_1,\mathbf{v}_2)$ is  
a polynomial of degree $\le 3j$ and parity $(-1)^j$.

Therefore, the amplitude in (\ref{eqn:Piknu and szego5}) is given by an asymptotic expansion in descending 
half-integer powers of $k$, and the asymptotic expansion for the integrand may be integrated terms by term. 
By a few computations, by (\ref{eqn:defn di Gamma}) 
one sees that the dominant term of the resulting expansion for 
(\ref{eqn:Piknu and szego5})
is the dominant term of the expansion for the following oscillatory integral:
\begin{eqnarray}
\label{eqn:Piknu and szego6-datogliere}
I_{\mathbf{v}_1, \mathbf{v}_2} (k) & := & 
\frac{\imath}{\sqrt{ k }}\cdot \left( \frac{k}{\pi} \right)^d \cdot\frac{k^{3/2}\,\nu }{(2\,\pi)^2}\, \int_0^{+\infty}\,\mathrm{d}u \,
\int_{-\infty}^{+\infty} \,\mathrm{d}\vartheta \,
\int_{-1}^{ 1 }\,\mathrm{ d }t \int_{-\pi}^{\pi}\, \mathrm{d} \delta\nonumber\\
&&\left[e^{\imath\, \sqrt{k}\, \Gamma(t ; u, \vartheta ) } \cdot e^{\imath \, u \, B_t 
+u \, \big[ \psi_2 \big( V_t, \mathbf{v}_2 \big) -\frac{ 1 }{ 2 } \, A_t^2     \big]}\right. \nonumber\\
&&\left. \cdot \rho  ( u ) \cdot \varrho_1\left(k^{-\epsilon_1}\,\vartheta\right) 
\cdot \vartheta \cdot u^d\right]\nonumber\\
&=& \frac{ \nu }{8\, \pi} \cdot \left( \frac{k}{\pi} \right)^{d+1} \,
\int_0^{+\infty}\,\mathrm{d}u \,
\int_{-\infty}^{+\infty} \,\mathrm{d}\vartheta \,
\int_{-1}^{ 1 }\,\mathrm{ d }t \int_{-\pi}^{\pi}\, \mathrm{d} \delta\nonumber\\
&&\left[e^{\imath\, \sqrt{k}\, \Gamma(t ; u, \vartheta ) } \cdot (2\imath \cdot \vartheta\cdot u)
\right. \nonumber\\
&&\left. \cdot e^{\imath \, u \, B_t 
+u \, \big[ \psi_2 \big( V_t, \mathbf{v}_2 \big) -\frac{ 1 }{ 2 } \, A_t^2     \big]}
\cdot \rho  ( u ) \cdot \varrho_1\left(k^{-\epsilon_1}\,\vartheta\right) 
 \cdot u^{d-1}\right].\nonumber
\end{eqnarray}
The latter may in turn be rewritten
\begin{eqnarray}
\label{eqn:Piknu and szego6}
I_{\mathbf{v}_1, \mathbf{v}_2}(k) 
&=& \frac{ \nu }{ 8\, \pi} \cdot \left( \frac{k}{\pi} \right)^{d+1} \,\frac{1}{\sqrt{k}\cdot \lambda (m_x)}\nonumber\\
&&\cdot \int_{-\pi}^{\pi}\, \mathrm{d} \delta \, 
\int_0^{+\infty}\,\mathrm{d}u \,
\int_{-\infty}^{+\infty} \,\mathrm{d}\vartheta \,
\int_{-1}^{ 1 }\,\mathrm{ d }t \,\left[ \partial_t \left( e^{\imath\, \sqrt{k}\, \Gamma(t;
u, \vartheta) } \right)
\right. \nonumber\\
&&\left. \cdot e^{\imath \, u \, B_t 
+u \, \big[ \psi_2 \big( V_t, \mathbf{v}_2 \big) -\frac{ 1 }{ 2 } \, A_t^2     \big]}
\cdot \varrho _1 ( u ) \cdot \varrho_2\left(k^{-\epsilon_1}\,\vartheta\right) 
 \cdot u^{d-1}\right].
\end{eqnarray}
Integrating by parts in $\mathrm{d}t$, we obtain
\begin{eqnarray}
\label{eqn:Piknu and szego7}
%\lefteqn{
I_{\mathbf{v}_1, \mathbf{v}_2} (k) 
%}\\
&=& \frac{ \nu }{ 8\, \pi} \cdot \left( \frac{k}{\pi} \right)^{d+1} \,\frac{1}{\sqrt{k}\cdot \lambda (m_x)}\\
&& \cdot \big[ J_{\mathbf{v}_1, \mathbf{v}_2}' (k) - J_{\mathbf{v}_1, \mathbf{v}_2}'' (k)
-  J_{\mathbf{v}_1, \mathbf{v}_2}''' (k)\big],\nonumber
\end{eqnarray}
where
\begin{eqnarray}
 \label{eqn:defn J'}
%\lefteqn{
J_{\mathbf{v}_1, \mathbf{v}_2}' (k)%} \\
& := &
\int_{-\pi}^{\pi}\, \mathrm{d} \delta \, 
\int_0^{+\infty}\,\mathrm{d}u \,
\int_{-\infty}^{+\infty} \,\mathrm{d}\vartheta \,
\left[ e^{\imath\, \sqrt{k}\, \Gamma(1 ;
u, \vartheta) } 
\right. \\
&&\left. \cdot e^{\imath \, u \, B_1 
+u \, \big[ \psi_2 \big( V_1, \mathbf{v}_2 \big) -\frac{ 1 }{ 2 } \, A_1^2     \big]}
\cdot \rho ( u ) \cdot \varrho_1\left(k^{-\epsilon_1}\,\vartheta\right) 
 \cdot u^{d-1}\right], \nonumber
\end{eqnarray}

\begin{eqnarray}
 \label{eqn:defn J''}
%\lefteqn{
J_{\mathbf{v}_1, \mathbf{v}_2}'' (k)%} \\
& := &
\int_{-\pi}^{\pi}\, \mathrm{d} \delta \, 
\int_0^{+\infty}\,\mathrm{d}u \,
\int_{-\infty}^{+\infty} \,\mathrm{d}\vartheta \,
\left[ e^{\imath\, \sqrt{k}\, \Gamma(-1 ;
u, \vartheta) } 
\right. \\
&&\left. \cdot e^{\imath \, u \, B_{-1 }
+u \, \big[ \psi_2 \big( V_{-1}, \mathbf{v}_2 \big) -\frac{ 1 }{ 2 } \, A_{-1}^2     \big]}
\cdot \rho  ( u ) \cdot \varrho_1\left(k^{-\epsilon_1}\,\vartheta\right) 
 \cdot u^{d-1}\right], \nonumber
\end{eqnarray}

\begin{eqnarray}
 \label{eqn:defn J'''}
%\lefteqn{
J_{\mathbf{v}_1, \mathbf{v}_2}''' (k)
%} \\
& := &
\int_{-1}^{ 1 }\,\mathrm{ d }t \,\int_{-\pi}^{\pi}\, \mathrm{d} \delta \, 
\int_0^{+\infty}\,\mathrm{d}u \,
\int_{-\infty}^{+\infty} \,\mathrm{d}\vartheta \,
\left[ e^{\imath\, \sqrt{k}\, \Gamma(t ;
u, \vartheta) } 
\right. \\
&&\left. \cdot \partial_t\left(  e^{\imath \, u \, B_{t }
+u \, \big[ \psi_2 \big( V_{t}, \mathbf{v}_2 \big) -\frac{ 1 }{ 2 } \, A_{t}^2     \big]}\right)
\cdot \rho  ( u ) \cdot \varrho_1\left(k^{-\epsilon_1}\,\vartheta\right) 
 \cdot u^{d-1}\right]. \nonumber
\end{eqnarray}

Let us estimate the three terms (\ref{eqn:defn J'}), (\ref{eqn:defn J''})
and (\ref{eqn:defn J'''}) separately.

\begin{lem}
 \label{lem:J' expansion}
As $k\rightarrow +\infty$, there is an asymptotic expansion of the form
\begin{eqnarray*}
 %\lefteqn{
J_{\mathbf{v}_1, \mathbf{v}_2}' (k)
%} \\
& \sim & 
%2\pi \cdot 
%e^{-\imath \, \sqrt{k}\,\vartheta_0\,\nu}\cdot
\frac{4\, \pi^2}{\sqrt{k}}\cdot \frac{1}{2\,\lambda (m_x)}\cdot e^{u_0(\nu,m_x) \cdot \psi_2 (\mathbf{v}_1, \mathbf{v}_2)}
\cdot \left(\frac{\nu}{2\,\lambda (m_x)}\right)^{d-1}\\
&&\cdot\left[1+\sum_{l=1}^{+\infty} k^{-l/2}\,a_l(m_x; \mathbf{v}_1,\mathbf{v}_2)\right],
\end{eqnarray*}
where $a_l(m_x; \cdot,\cdot)$ is a polynomial of degree $\le 3l$ and parity $(-1)^l$, whose coefficients are
$\mathcal{C}^\infty$ functions on $M$.
\end{lem}

\begin{proof}
 [Proof of Lemma \ref{lem:J' expansion}]
Let us view $J_{\mathbf{v}_1, \mathbf{v}_2}' (k)$ as an oscillatory integral in the
parameter $\sqrt{k}$, with real phase 
\begin{eqnarray}
 \label{eqn:defn di Gamma1}
\Gamma(1; u, \vartheta) 
&=& \vartheta\cdot \big[ 2 \, \lambda (m_x) \cdot u 
- \nu\big],
%\\
%&=&u \cdot \left[ (\theta_1 - \theta_2)  
%+\Big\langle \Phi (m_x), \mathrm{Ad}_{h_{m_x}} (\beta)\Big\rangle \cdot\vartheta \right] 
%- \vartheta \, \nu, \nonumber
\end{eqnarray}
and amplitude
\begin{eqnarray} 
\label{eqn:amplitudet1}
%\lefteqn{
 e^{\imath \, u \, B_1 
+u \, \big[ \psi_2 \big( V_1, \mathbf{v}_2 \big) -\frac{ 1 }{ 2 } \, A_1^2     \big]}
\cdot \varrho _1 ( u ) \cdot \varrho_2\left(k^{-\epsilon_1}\,\vartheta\right) 
 \cdot u^{d-1}.
\end{eqnarray}
Explicitly, the exponent is
\begin{eqnarray}
 \label{eqn:exponent t1}
%\lefteqn{
\mathcal{E}_1(u,\vartheta, \mathbf{v}_1, \mathbf{v}_2) & := & \imath \, u \, B_1 
+u \, \big[ \psi_2 \big( V_1, \mathbf{v}_2 \big) -\frac{ 1 }{ 2 } \, A_1^2     \big]
%} 
\\
&=& u \, \left[-\imath\, \omega_{ m_x } (\mathbf{v}_1,  \mathbf{v}_2 ) 
+ \imath \, \vartheta\, \omega_{ m_x } \Big( \mathrm{Ad}_{h_{m_x}}( \beta)_M (m_x), \mathbf{v}_1 +\mathbf{v}_2\Big)\right. \nonumber \\
&& \left.
-\frac{1}{2} \, \Big\| (\mathbf{v}_1 - \mathbf{v}_2) -\vartheta \, \mathrm{Ad}_{h_{m_x}}(\beta)_X(x) \Big\|^2\right].\nonumber
\end{eqnarray}
Under the hypothesis of the Theorem, therefore, $\Re (\mathcal{E}_1) \le - C'\, \vartheta^2 +C''$ for some 
constants $C',\, C''>0$. 

Furthermore, the phase has a unique critical point, given by
$(u_0(\nu,m_x), 0)$ (Definition \ref{defn:coset}),
and Hessian matrix
$$
\mathrm{Hess}\big( \Gamma(1 ;
\cdot, \cdot)\big)= 
\begin{pmatrix}
 0& 2\,\lambda (m_x) \\
2\,\lambda (m_x) & 0
\end{pmatrix}.
$$
Hence the Hessian determinant is $-4\, \lambda (m_x)^2$ and its signature is zero. 
Thus the critical point is non-degenerate, and the critical value is
$
\Gamma(1 ;
u_0, 0) =0$.
At the critical point, the exponent in the amplitude is 
$$
\mathcal{E}_1\big(u_0(\nu,m_x) ,0 , \mathbf{v}_1, \mathbf{v}_2\big)
=u_0(\nu,m_x) \cdot \psi_2 (\mathbf{v}_1, \mathbf{v}_2).
$$
When applying the Stationary Phase Lemma, at the $l$-th step we need to let
the differential operator 
$k^{-l/2}\,R_\Gamma ^l$ act on the amplitude (\ref{eqn:amplitudet1}), where
$$
R_\Gamma:= \frac{\imath}{4\cdot \lambda(m_x)}\cdot \frac{\partial^2}{\partial u\,\partial\vartheta},
$$ 
and evaluate the result at the critical point. One sees inductively that 
\begin{eqnarray*}
k^{-l/2}\,R_\Gamma ^l\left( e^{\imath \, \mathcal{E}_1(u,\vartheta, \mathbf{v}_1, \mathbf{v}_2)}\right)
= H_{l}\,e^{\imath \,\mathcal{E}_1(u,\vartheta, \mathbf{v}_1, \mathbf{v}_2)},
\end{eqnarray*}
where $H_l$ is a polynomial of degree $\le 3l$ in $(\vartheta,\mathbf{v}_1,\mathbf{v}_2)$ and parity $(-1)^l$.

The proof of Lemma \ref{lem:J' expansion} is complete.

\end{proof}

\begin{lem}
 \label{lem:J'' negligible}
As $k\rightarrow +\infty$, we have $J_{\mathbf{v}_1, \mathbf{v}_2}'' (k) = O\left(k^{-\infty}\right)$.
\end{lem}

\begin{proof}
[Proof of Lemma \ref{lem:J'' negligible}]
Let us view $J_{\mathbf{v}_1, \mathbf{v}_2}'' (k)$ as an oscillatory integral in $\sqrt{k}$, with 
phase $\Gamma_{\theta_1,\theta_2}(-1 ;
u, \vartheta)$. By (\ref{eqn:defn di Gamma}),
$$
\partial_{\vartheta} \Gamma(-1 ;
u, \vartheta)=-2\, u\cdot  \lambda (m_x) -\nu\le -\nu.
$$
The claim follows by 
iterated integration by parts in $\vartheta$.
\end{proof}

\begin{lem}
 \label{lem:J''' lower order}
$\mathcal{J}_{\mathbf{v}_1, \mathbf{v}_2}''' (k;t,\delta)_2=O\left(k^{-1}\right)$ as $k\rightarrow +\infty$; furthermore,
$\mathcal{J}_{\mathbf{v}_1, \mathbf{v}_2}''' (k;t,\delta)_2$ admits an asymptotic expansion of the same kind as 
$\mathcal{J}_{\mathbf{v}_1, \mathbf{v}_2}' (k;t,\delta)_2$ (of lower leading order).
\end{lem}

\begin{proof}
[Proof of Lemma \ref{lem:J''' lower order}]
Let us choose $\epsilon'\in \big(0, \nu/ \big(4\,D\cdot \lambda(m_x)\big)\big)$, and consider the open cover 
$\mathcal{U}:=\{[-1,2\epsilon'),(\epsilon',1]\}$.
Let $\gamma_1(t)+\gamma_2(t)=1$ be a smooth partition of unity on $[-1,1]$
subordinate to $\mathcal{U}$.
Thus 
$$
J_{\mathbf{v}_1, \mathbf{v}_2}''' (k) = J_{\mathbf{v}_1, \mathbf{v}_2}''' (k)_1 + J_{\mathbf{v}_1, \mathbf{v}_2}''' (k)_2,
$$
where $J_{\upsilon_1, \upsilon_2}''' (k)_j$ is defined as in
(\ref{eqn:defn J'''}), with the extra factor
$\gamma_j(t)$. Explicitly, let us set 
\begin{eqnarray}
 \label{eqn:exponent tt}
%\lefteqn{
\mathcal{E}_t(u,\vartheta, \mathbf{v}_1, \mathbf{v}_2) & := & \imath \, u \, B_t 
+u \, \big[ \psi_2 \big( V_t, \mathbf{v}_2 \big) -\frac{ 1 }{ 2 } \, A_t^2     \big].
\end{eqnarray}
Then 
$$
J_{\mathbf{v}_1, \mathbf{v}_2}''' (k)_j
=\int_{-\pi}^{\pi}\, \mathrm{d} \delta \,\int_{-1}^{ 1 }\,\mathrm{ d }t \,
\left[\mathcal{J}_{\mathbf{v}_1, \mathbf{v}_2}''' (k;t,\delta)_j\right],
$$
where
\begin{eqnarray}
\mathcal{J}_{\mathbf{v}_1, \mathbf{v}_2}''' (k;t,\delta)_j&=&\int_0^{+\infty}\,\mathrm{d}u \,
\int_{-\infty}^{+\infty} \,\mathrm{d}\vartheta \,
\left[ e^{\imath\, \sqrt{k}\, \Gamma(t ;
u, \vartheta) } \cdot \gamma_j (t)
\right. \\
&&\left. \cdot \partial_t\Big(\mathcal{E}_t(u,\vartheta, \mathbf{v}_1, \mathbf{v}_2)\Big)\cdot 
 e^{\mathcal{E}_t(u,\vartheta, \mathbf{v}_1, \mathbf{v}_2)}
\cdot \rho ( u ) \cdot \varrho_1\left(k^{-\epsilon_1}\,\vartheta\right) 
 \cdot u^{d-1}\right]. \nonumber
\end{eqnarray}
Let us view $\mathcal{J}_{\mathbf{v}_1, \mathbf{v}_2}''' (k;t,\delta)_j$ as an oscillatory integral with phase 
$\Gamma_t(u,\vartheta):=\Gamma(t ;
u, \vartheta) $.

On the support of $\rho(u)\cdot \gamma_1(t)$, we have $u\le D$ and $t\le \epsilon'$; therefore,
$$
\partial_{\vartheta}\Gamma_t(
u, \vartheta)= 2 u \cdot t \cdot \lambda (m_x) 
- \nu \le 2\, D\cdot \epsilon'\cdot \lambda (m_x) -\nu \le -\frac{\nu}{2}.
$$
Therefore, integration by parts in $\vartheta$ implies that $\mathcal{J}_{\mathbf{v}_1, \mathbf{v}_2}''' (k;t,\delta)_j
=O\left(k^{-\infty}\right)$, uniformly for $t$ in the support of $\gamma_1$. Hence 
$J_{\mathbf{v}_1, \mathbf{v}_2}''' (k)_1=O\left(k^{-\infty}\right)$.

On the support of $\gamma_2$, on the other hand,
$\Gamma(t; \cdot, \cdot)$ has the non-degenerate critical point
\begin{equation*}
 \big( u(t), 0 \big) = \left( \frac{ \nu }{2\, t\, \lambda (m_x)},
0\right),
\end{equation*}
with Hessian matrix 
$$
\mathrm{Hess}\big( \Gamma_t\big)= 
\begin{pmatrix}
 0& 2\,t\cdot \lambda (m_x) \\
2\,t\cdot \lambda (m_x) & 0
\end{pmatrix}.
$$
Therefore, we can apply the Stationary Phase Lemma as in the proof of Lemma \ref{lem:J' expansion},
viewing $t$ as a parameter. The asymptotic expansion will be non trivial only for 
$t\in \big[\nu/(2\,\lambda (m_x)\,D),1\big]$, for otherwise $\rho $
vanishes identically in a neighborhood of $u(t)$.

Given (\ref{eqn:defn of Vk}), (\ref{eqn:defn di A}), and (\ref{eqn:defn di B})
(with $g\,T$ replaced by $h_{m_x}\,g(t,\delta)\,T$), the integrand in (\ref{eqn:defn J'''}) is divisible by
$\vartheta$; hence it vanishes at the critical point. Furthermore, the integrand is of class $L^1$ as a function
of the parameter $t$, since the exponent getting differentiated is a smooth function of $t$ and $\sqrt{1-t^2}$.

Hence $\mathcal{J}_{\mathbf{v}_1, \mathbf{v}_2}''' (k;t,\delta)_2$ admits an asymptotic expansion in descending 
half-integer powers of $k$, with leading power $k^{-1}$, and coefficients of class $L^1$
as functions of $t$. The general term of the expansion will be a scalar multiple of 
\begin{equation}
 \label{eqn:general term}
k^{-(l+1)/2} \cdot R_{\Gamma_t}^l\left(  \partial_t\Big(\mathcal{E}_t(u,\vartheta, \mathbf{v}_1, \mathbf{v}_2)\Big)\cdot 
 e^{\mathcal{E}_t(u,\vartheta, \mathbf{v}_1, \mathbf{v}_2)}\right).
\end{equation}

Given integers $a,b\ge 0$, let us denote by $H_{a,b}(\vartheta;\mathbf{v}_1,\mathbf{v}_2)$ a generic polynomial 
in $(\vartheta,\mathbf{v}_1,\mathbf{v}_2)$,
which is separately homogeneous of degree $a$ in $\vartheta$, and of degree $b$ in $(\mathbf{v}_1,\mathbf{v}_2)$,
and by $H_{a}(\vartheta,\mathbf{v}_1,\mathbf{v}_2)$ a generic polynomial in $ (\vartheta;\mathbf{v}_1,\mathbf{v}_2)$
homogeneous of degree $a$ (but perhaps not polyhomogeneous); both $H_{a,b}$ and $H_a$ are allowed to vary from line to line,
and their coefficients depend smoothly on $u$.
Thus
$\mathcal{E}_t=u\cdot H_{2}=u\cdot (H_{2,0}+H_{1,1}+H_{0,2})$, $\partial_t\mathcal{E}_t=u(H_{2,0}+H_{1,1})$ (here the polynomials do not depend on $u$).
Hence we can split (\ref{eqn:general term}) as 
\begin{eqnarray}
 \label{eqn:general term1}
\lefteqn{k^{-(l+1)/2} \cdot \left[R_{\Gamma_t}^l\left(  \rho(u)\cdot H_{2,0}\cdot 
 e^{u\cdot (H_{2,0}+H_{1,1}+H_{0,2})}\right)\right.}\nonumber\\
&&\left.+ R_{\Gamma_t}^l\left( \rho(u)\cdot H_{1,1}\cdot 
 e^{u\cdot (H_{2,0}+H_{1,1}+H_{0,2})}\right)\right].
\end{eqnarray}

The proof of Lemma \ref{lem:J''' lower order}
is completed by the following two claims, which can be proved inductively from the cases $l=0,1$:

\begin{claim}
\label{claim:first case}
 For $l=0,1,2,\ldots$,
$$
R_{\Gamma_t}^l\left( \rho(u)\cdot H_{1,1}\cdot 
 e^{\mathcal{E}_t}\right)$$
is a sum of terms of the form
$$
\Big[H_{0,1}\cdot H_{p_l}+H_{1,1}\cdot H_{q_l}\Big]\cdot e^{\mathcal{E}_t},
$$
where $p_l+1\le 3\,l$, $(-1)^{p_l+1}=(-1)^{l}$, and $q_l\le 3l$, $(-1)^{q_l}=(-1)^l$.
\end{claim}

\begin{claim}
 \label{claim:second case}
For $l=0,1,2,\ldots$,
$$
R_{\Gamma_t}^l\left( \rho(u)\cdot H_{2,0}\cdot 
 e^{\mathcal{E}_t}\right)$$
is a sum of terms of the form
$
H_{a,b}\cdot e^{\mathcal{E}_t}
$,
where $b\le 3\,l$ and $(-1)^{a+b}=(-1)^{l}$.
\end{claim}

\end{proof}

Since at the critical point 
$\vartheta=0$, the summands with a factor of the form $H_{a,b}$ with $a\ge 1$ all vanish at the critical point.
It follows that the asymptotic expansion for (\ref{eqn:Piknu and szego6}) is as in the statement of Proposition
\ref{prop:rescaled case 0}.
The contributions to the asymptotic expansion for (\ref{eqn:Piknu and szego5}) coming from the lower order terms
in (\ref{eqn:product asymptotic expansion}) can be dealt with by similar arguments. This completes the proof of 
Proposition \ref{prop:rescaled case 0}.
\end{proof}

\subsubsection{$\Pi_{k \boldsymbol{\nu} }
\big (x_{1k} , x_{2k} \big)_-$}

Let us now consider the asymptotics of the second summand in
(\ref{eqn:Pik decomposed 0pi}). 
We shall prove the following analogue of Proposition \ref{prop:rescaled case 0}.

\begin{prop}
\label{prop:rescaled case 1}
Under the assumptions of Theorem \ref{thm:rescaled asymptotics},
suppose in addition that $-I_2\in G_x$. 
Let us set 
$\mathbf{v}_1^{-I_2} := 
\mathrm{d}_{m_x}\mu_{-I_2}(\mathbf{v}_1)$.
Then
as $k\rightarrow +\infty$ we have an asymptotic expansion
\begin{eqnarray*}
%\lefteqn{
\Pi_{k\boldsymbol{\nu}}\left(x_{1k} , x_{2k}\right)_-
%}\\
& \sim &  \frac{e^{\imath\,\pi (1-k\cdot  \nu)}}{2\,\lambda (m_x)} \cdot \left(\frac{\nu \,k }{2\,\pi\,\lambda (m_x)}\right)^d \cdot 
e^{u_0(\nu,m_x) \cdot \psi_2 (\mathbf{v}_1^{-I_2}, \mathbf{v}_2) }\\
&& \cdot 
\left[1+ \sum_{j\ge 1}^{+\infty} k^{-j/2} A^-_j(x;\mathbf{v}_1, \mathbf{v}_2)\right],
\end{eqnarray*}
where
$A^-_j(x;\cdot, \cdot)$ is a polynomial of degree $\le 3\,j$ and parity $(-1)^j$.
\end{prop}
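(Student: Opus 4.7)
The plan is to reduce Proposition \ref{prop:rescaled case 1} to Proposition \ref{prop:rescaled case 0} by exploiting the centrality of $-I_2$ in $G$. In the analogue of (\ref{eqn:Piknu and szego1}) for $\Pi_{k\boldsymbol{\nu}}(x_{1k},x_{2k})_-$, the cut-off $\varrho_-(ge^{-\vartheta\beta}g^{-1})$ localizes integration to a neighborhood of the conjugacy class $\{-I_2\}$; since $-I_2$ is central, this is equivalent to localizing $\vartheta$ near $\pi$. I would therefore perform the substitution $\vartheta=\pi+\vartheta'$ with $\vartheta'$ small. Using $e^{-\pi\beta}=-I_2$, one has $e^{-\vartheta\beta}=-I_2\cdot e^{-\vartheta'\beta}$, and centrality together with $\widetilde{\mu}_{ab}=\widetilde{\mu}_a\circ\widetilde{\mu}_b$ gives
$$\widetilde{\mu}_{ge^{-\vartheta\beta}g^{-1}}(x_{1k})=\widetilde{\mu}_{ge^{-\vartheta'\beta}g^{-1}}\bigl(\widetilde{\mu}_{-I_2}(x_{1k})\bigr).$$
By the hypothesis $-I_2\in G_x$ and the $G_x$-equivariance (\ref{eqn:invariant HLC}) of HLC's, $\widetilde{\mu}_{-I_2}(x_{1k})=x+k^{-1/2}\mathbf{v}_1^{-I_2}$.

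Next I would collect the phase contributions produced by the substitution. The phase term $-\imath k\nu\vartheta$ acquires an additive $-\imath k\nu\pi$, producing the overall scalar $e^{-\imath\pi k\nu}$; simultaneously, the Weyl denominator $e^{\imath\vartheta}-e^{-\imath\vartheta}$ changes sign under $\vartheta\mapsto\pi+\vartheta'$, contributing a further factor of $-1$. Combining these, one obtains an overall prefactor $-e^{-\imath\pi k\nu}=e^{\imath\pi(1-k\nu)}$, which agrees with $f_{1-k\nu}(-I_2)$ via Definition \ref{defn:fl} (since $-I_2=e^{\pi\beta}$), as expected from the general shape of Theorem \ref{thm:G_xnotinZx}.

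After these reductions the integral is of precisely the form analyzed in the proof of Proposition \ref{prop:rescaled case 0}, but with $x_{1k}$ replaced by $x+k^{-1/2}\mathbf{v}_1^{-I_2}$, and with the cut-off now supported near $\vartheta'=0$. Before quoting that proposition I must verify that its hypotheses are preserved. Since $-I_2$ is central, $\mathrm{Ad}_{-I_2}=\mathrm{id}_{\mathfrak{g}}$, so $\mathrm{d}_{m_x}\mu_{-I_2}$ fixes the $G$-orbit tangent direction $T_{m_x}(G\cdot m_x)$ pointwise and, being a unitary automorphism of $T_{m_x}M$, maps its orthogonal complement to itself; hence $\mathbf{v}_1^{-I_2}$ still lies in a subspace transverse to the $G$-orbit, with $\|\mathbf{v}_1^{-I_2}\|=\|\mathbf{v}_1\|\le Ck^{\epsilon}$. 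Applying Proposition \ref{prop:rescaled case 0} to $(x+k^{-1/2}\mathbf{v}_1^{-I_2},x_{2k})$ and multiplying by the prefactor $e^{\imath\pi(1-k\nu)}$ then yields the desired asymptotic expansion, with $A^-_j(x;\mathbf{v}_1,\mathbf{v}_2):=A^+_j(x;\mathbf{v}_1^{-I_2},\mathbf{v}_2)$; the degree and parity bounds persist because $\mathbf{v}_1\mapsto\mathbf{v}_1^{-I_2}$ is $\mathbb{R}$-linear.

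The main obstacle is essentially bookkeeping: correctly combining the phase contributions from the shift $\vartheta\mapsto\pi+\vartheta'$ and from the sign flip of the Weyl denominator to recover exactly $e^{\imath\pi(1-k\nu)}$, and verifying that the transversality and polynomial-growth hypotheses on $\mathbf{v}_1$ are stable under the action of $\mathrm{d}_{m_x}\mu_{-I_2}$. No further stationary-phase computation beyond the one carried out in Proposition \ref{prop:rescaled case 0} is needed.
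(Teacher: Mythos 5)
Your proposal follows essentially the same route as the paper: shift $\vartheta\mapsto\pi+\vartheta'$, use centrality of $-I_2$ together with the $G_x$-equivariant HLC's (\ref{eqn:invariant HLC}) to replace $x_{1k}$ by $x+k^{-1/2}\mathbf{v}_1^{-I_2}$, collect the prefactor $e^{\imath\pi(1-k\nu)}$ from the phase shift and the sign change of the Weyl denominator, and reduce to Proposition \ref{prop:rescaled case 0}. Your additional checks — that $\mathrm{Ad}_{-I_2}=\mathrm{id}$ so transversality to the orbit and the bound $\|\mathbf{v}_1^{-I_2}\|=\|\mathbf{v}_1\|\le Ck^\epsilon$ are preserved, and that degree/parity of the $A_j$ survive the linear substitution — are correct and make explicit some points the paper leaves implicit.
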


\begin{proof}
 [Proof of Proposition \ref{prop:rescaled case 1}]
The proof is a slight modification of the one for Proposition \ref{prop:rescaled case 0}.

$\Pi_{k \boldsymbol{\nu} }
\big (x_{1k} , x_{2k} \big)_-$ is given by (\ref{eqn:Piknu and szego1}), 
with $\varrho$
replaced by $\varrho_- $; therefore,
integration in $\mathrm{d}\vartheta$ is now restricted to $(\pi-2\,\delta,\pi+2\,\delta)$. 
With the change of variable
$\vartheta\mapsto \vartheta+\pi$, $g e^{-\imath \vartheta B} g^{-1}$ gets replaced by
$-g e^{-\imath \vartheta B} g^{-1}$ and $\vartheta\in (-\delta,\delta)$.

%we may assume that $\varrho_- \left(g e^{-\imath \vartheta B} g^{-1}\right)$
%is identically equal to one for $\vartheta\in (\pi-\delta,\pi+\delta)$. 

Lemma \ref{lem:shrinking support vartheta} still applies, so that we can again rescale in $\vartheta$.
By (\ref{eqn:invariant HLC}), we have
\begin{eqnarray}
 \label{eqn:differential of -I_2}
x^{-I_2}_{1k}&:=&\widetilde{\mu}_{-I_2}( x_{1k} )= 
x+\frac{1}{\sqrt{k}}\,\mathbf{v}_1^{-I_2}.
\end{eqnarray}
Therefore, in place of (\ref{eqn:Piknu and szego2}), we obtain the following:
\begin{eqnarray}
\label{eqn:Piknu and szego2pi}
\lefteqn{
\Pi_{k \boldsymbol{\nu} }
\big (x_{1k} , x_{2k} \big)_-
} \\
&\sim & \frac{k^{3/2}\,\nu }{2\pi}\, \int_0^{+\infty}\,\mathrm{d}u \,
\int_{-\infty}^{+\infty} \,\mathrm{d}\vartheta \,
\int_{G/T} \, \mathrm{d}V_{G/T} (gT)
\left[e^{\imath\, k\, \Gamma_k} \cdot \varrho_2\left(k^{-\epsilon_1}\,\vartheta\right) \right.
\nonumber\\
&&\left.\cdot \rho ( u ) \cdot
\left( e^{\imath\left(\pi+\vartheta/\sqrt{k}\right)}-e^{-\imath\,\left(\pi+\vartheta/\sqrt{k}\right)}\right)\cdot 
s\left( \widetilde{\mu}_{g e^{-\imath \vartheta B/\sqrt{k}} g^{-1}}( x^{-I_2}_{1k} ),
x_{2k}, k\, u \right)
\right] \nonumber,
\end{eqnarray}
where
\begin{eqnarray}
 \label{eqn:defn di Psikpi}
\Gamma_k(u, \mathbf{v}_1 , \mathbf{v}_2 ,\vartheta, g\,T) & := &
u\, \psi \left( \widetilde{\mu}_{g e^{-\imath \vartheta B/\sqrt{k}} g^{-1}}
\circ ( x^{-I_2}_{1k} ),
x_{2k} \right) -\frac{\vartheta}{\sqrt{k}}\cdot \nu-\pi\cdot \nu\nonumber\\
&=&\Psi_k (u, \mathbf{v}_1^{-I_2} , \mathbf{v}_2 ,\vartheta, g\,T)-\pi\cdot \nu.
\end{eqnarray}

Let us write 
$\Psi_k':=\Psi_k (u, \upsilon_1' , \upsilon_2 ,\vartheta, g\,T)$.
Thus we may rewrite (\ref{eqn:Piknu and szego2}) in the following manner:
\begin{eqnarray}
\label{eqn:Piknu and szego2pi1}
\lefteqn{
\Pi_{k \boldsymbol{\nu} }
\big (x_{1k} , x_{2k} \big)_-
} \\
&\sim & e^{\imath\,\pi (1-k\cdot  \nu)}\cdot \frac{k^{3/2}\,\nu }{2\pi}\, \int_0^{+\infty}\,\mathrm{d}u \,
\int_{-\infty}^{+\infty} \,\mathrm{d}\vartheta \,
\int_{G/T} \, \mathrm{d}V_{G/T} (gT)
\left[e^{\imath\, k\, \Psi_k'} \cdot \varrho_2\left(k^{-\epsilon_1}\,\vartheta\right)\right.
\nonumber\\
&&\left.\cdot \varrho _1 ( u ) \cdot
\left( e^{\imath\,\vartheta/\sqrt{k}}-e^{-\imath\,\vartheta/\sqrt{k}}\right)\cdot 
s\left( \widetilde{\mu}_{g e^{-\imath \vartheta B/\sqrt{k}} g^{-1}}( x'_{1k} ),
x_{2k}, k\, u \right)
\right] \nonumber\\
&\sim& e^{\imath\,\pi (1-k\cdot \nu)}\cdot  \Pi_{k \boldsymbol{\nu} }
\left (x+\frac{1}{\sqrt{k}}\,\mathbf{v}_1^{-I_2}, x+\frac{1}{\sqrt{k}}\,\mathbf{v}_2 \right)_+.
\end{eqnarray}

The statement of Proposition \ref{prop:rescaled case 1} follows from (\ref{eqn:Piknu and szego2pi1})
and Proposition \ref{prop:rescaled case 0}.

\end{proof}

The proof of Theorem \ref{thm:rescaled asymptotics} is complete.

\end{proof}
%$$
%\frac{1}{\pi} \cdot \left(\frac{ \nu \, k}{2 \pi \lambda (m_x)}\right)^d \cdot 
%e^{-\imath \,\vartheta_0 \,\nu+u_0\cdot H_x(\upsilon_1 , \upsilon_2)}
%\cdot\left[1+\sum_{l=1}^{+\infty} k^{-l/2}\,a_l(m_x)\right].
%$$
%\begin{eqnarray}
%\lefteqn{u \, \left[-\imath\, \omega_{ m_x } (\mathbf{v}_1  \mathbf{v}_2 ) 
%+ \imath \, \vartheta\, \omega_{ m_x } \Big( \mathrm{Ad}_{h_m}( \beta)_M (m_x), \mathbf{v}_1 +\mathbf{v}_2\Big)\right.} \nonumber \\
%&& \left.
%-\frac{1}{2} \, \Big\| (\mathbf{v}_1 - \mathbf{v}_2) -\vartheta \, \mathrm{Ad}_{h_m}(\beta)_M (m_x) \Big\|^2\right. \nonumber\\
%&& \left. -\frac{1}{2} \, \left((\theta_1 -\theta_2 ) + \vartheta \,
%\Big\langle\Phi_G (m_x),  \mathrm{Ad}_{h_m}( \beta) \Big\rangle\right)^2\right]\nonumber \\
%\end{eqnarray}

%\input{./SU(2)_dt_compactsupport.tex}
%\input{./SU(2)_locallyfreecase1.tex}

\subsection{Theorem \ref{thm:G_xnotinZx}}

\begin{proof}
 [Proof of Theorem \ref{thm:G_xnotinZx}]

Let $\varrho:G\rightarrow [0,+\infty)$ be a smooth bump function supported in a small neighborhood of $I_2$,
and identically equal to $1$ on a smaller neighborhood. With $g_j$ as in 
(\ref{eqn:defn di t_j}), $j=1,\ldots,N_x$, let us set 
$\varrho_j(g):=\varrho\left(g\,g_j^{-1}\right)$.
Then
\begin{eqnarray}
 \label{eqn:PiknintegraleG_x}
%\lefteqn{
\Pi_{k\,\boldsymbol{\nu}}(x,x)
%} \\
& = & k\,\nu \cdot\int_G\,\mathrm{d}V_G(g) \,\left[\overline{\chi_{k\,\boldsymbol{\nu}}(g)}\,
\Pi \left(\widetilde{\mu}_{g^{-1}} (x) ,x\right)\right] \nonumber\\
& \sim&
\sum_{j=1}^{N_x} k\,\nu \cdot \int_G\,\mathrm{d}V_G(g) \,\left[\varrho_j(g)\cdot
\overline{\chi_{k\,\boldsymbol{\nu}}(g)}\,
\Pi \left(\widetilde{\mu}_{g^{-1}} (x) ,x\right)\right] .
\end{eqnarray}
Let us write $\Pi_{k\,\boldsymbol{\nu}}(x,x)_j$ for the $j$-th summand in (\ref{eqn:PiknintegraleG_x}).

With $Z_x$ as in Definition \ref{defn:center}, we can rewrite (\ref{eqn:PiknintegraleG_x}) as follows:
\begin{eqnarray}
 \label{eqn:PiknintegraleG_x two types}
\Pi_{k\,\boldsymbol{\nu}}(x,x)
&\sim&\Pi_{k\,\boldsymbol{\nu}}(x,x)_{Z_x}
+\Pi_{k\,\boldsymbol{\nu}}(x,x)_{G_x\setminus Z_x},
\end{eqnarray}
where
\begin{equation}
 \label{eqn:Z_x+nonZ_x}
\Pi_{k\,\boldsymbol{\nu}}(x,x)_{Z_x}:=\sum_{g_j\in Z_x}\Pi_{k\,\boldsymbol{\nu}}(x,x)_j,
\quad \Pi_{k\,\boldsymbol{\nu}}(x,x)_{G_x\setminus Z_x}:=\sum_{g_j\not\in Z_x}\Pi_{k\,\boldsymbol{\nu}}(x,x)_j.
\end{equation}

The asymptotic expansion (\ref{eqn:Z_x term}) for the first summand in (\ref{eqn:PiknintegraleG_x two types}) 
follows from Theorem \ref{thm:rescaled asymptotics},
with $\mathbf{v}_1=\mathbf{v}_2=0$. Hence, Theorem \ref{thm:G_xnotinZx} will be proved by establishing the following.

\begin{prop}
 \label{prop:g_jnotinZx}
Assume, as in (\ref{eqn:G_xminusZ_x}),
that $G_x\setminus Z_x=\left\{g_j,g_{j+a_x}:=g_j^{-1}\right\}_{j=1}^{a_x}$, and let
%$A(x;j)$ 
$B(x;j)$ be as in Definition \ref{defn:definition of Bxj}. Then
as $k\rightarrow +\infty$ there is an asymptotic expansion
\begin{eqnarray*}
 %\lefteqn{
\Pi_{k\,\boldsymbol{\nu}}(x,x)_{G_x\setminus Z_x}
%} \\
& \sim &4\,\pi\left(\frac{\nu\,k}{2\,\pi\cdot \lambda(m_x)}\right)^d\\
&&\cdot 
\left[\sum_{j=1}^{a_x}\Re\left(\frac{\imath\,\sin(\vartheta_j)\cdot e^{-\imath k \nu\cdot \vartheta_j}}{\sqrt{\det \big(B(x;j)\big)}}\right)
+\sum_{l\ge 1} k^{-l/2}\,P_{jl}(\nu;m_x)\right],
\end{eqnarray*}
where $P_{jl}(\nu;\cdot)$ is $\mathcal{C}^\infty$ on the loci (in $M$) defined by the cardinality of $G_x$, $x\in p^{-1}(m)$.
\end{prop}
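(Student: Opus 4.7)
The plan is to adapt the strategy of Proposition \ref{prop:rescaled case 0} to each non-central stabilizer $g_j \in G_x \setminus Z_x$. Three structural features distinguish the non-central case: the critical point of the Weyl phase now sits at the shifted angle $\vartheta = \mp \vartheta_j$ (corresponding to the eigenvalues $e^{\pm \imath \vartheta_j}$ of $g_j$); the integration over $G/T$ contributes a genuine complex Gaussian rather than a boundary contribution in $t = \cos(2\theta)$; and pairing $g_j$ with $g_j^{-1}$ yields complex conjugate terms that reconstruct the $\Re$ in the target formula.

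First I would start from the summand $\Pi_{k\boldsymbol{\nu}}(x,x)_j$ in (\ref{eqn:PiknintegraleG_x}), apply the Weyl integration formula of \S\ref{sctn:weylint&char} together with the FIO description of $\Pi$, and repeat the reductions of \S\ref{sctn:compact reduction} to obtain an oscillatory integral analogous to (\ref{eqn:Piknu and szego1}), now localized by $\varrho_j$ near $g_j$. After the change of variable $gT \mapsto h_{m_x}\, gT$ in $G/T$, the support of $\varrho_j$ meets the critical locus at two points, $(I\cdot T,\, -\vartheta_j)$ and $(wT,\, \vartheta_j)$ with $w \in N(T)\setminus T$ a representative of the non-trivial Weyl element; both correspond to $g_j$ under the map $(gT,\vartheta)\mapsto h_{m_x}\,g\, e^{-\vartheta\beta}\,g^{-1}\,h_{m_x}^{-1}$ and yield equal contributions by the $t\leftrightarrow t^{-1}$ symmetry already exploited in \S\ref{sctn:weylint&char}. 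Localizing to the first, I would parametrize $G/T$ near the identity coset by $z \in \mathbb{C}$ via $g \sim e^{A(z)}$ (with $A(z)$ as in Definition \ref{defn:definition of Bxj}) and set $\vartheta = -\vartheta_j + \tau$. A Baker--Campbell--Hausdorff computation paralleling (\ref{eqn:HLCaction})--(\ref{eqn:defn di B}) yields
\[
h_{m_x}\, g\, e^{-\vartheta \beta}\, g^{-1}\, h_{m_x}^{-1} \,=\, g_j \cdot \exp\Bigl( \mathrm{Ad}_{h_{m_x}}\bigl(\eta_j(z) - \tau\,\beta - |z|^2\sin(2\vartheta_j)\,\beta + \cdots\bigr)\Bigr),
\]
where the quadratic correction $-|z|^2\sin(2\vartheta_j)\,\beta$ arises from the BCH bracket $[\mathrm{Ad}_{t_j^{-1}}A(z),\, -A(z)] = -2|z|^2\sin(2\vartheta_j)\,\beta$ and will be the source of the imaginary part of $B(x;j)$.

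Since $\widetilde{\mu}_{g_j}(x) = x$ and the $G_x$-invariance of the HLCs (\ref{eqn:invariant HLC}) guarantees that $g_j$ acts trivially on the angular coordinate, the Weyl character extracts a pure constant phase $e^{\imath k\nu\vartheta_j}$. After rescaling $(\tau, z) \mapsto (\tau/\sqrt{k}, z/\sqrt{k})$, the residual phase takes the form $\sqrt{k}\,\mathcal{G}(u,\tau) + \mathcal{D}(u,\tau,z)$ exactly as in Lemma \ref{lem:expansion of Psik} at $t = 1$; stationary phase in $(u,\tau)$ at the non-degenerate critical point $(u_0(\nu, m_x),\, 0)$ produces the factor $1/(2\lambda(m_x))$ as in Lemma \ref{lem:J' expansion}, and substitution into $\mathcal{D}$ reduces the $z$-integration to a complex Gaussian. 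The real part of the resulting quadratic form is $\tfrac{1}{2}Z^t C(x;j)\,Z = \|\mathrm{Ad}_{h_{m_x}}(\eta_j(z))_X(x)\|^2$ (Definition \ref{defn:definition of Bxj}); the imaginary shift $4\imath\sin(2\vartheta_j)\lambda(m_x)\,I_2$ is transported from the $|z|^2\,\beta$ BCH correction above into the angular HLC coordinate via (\ref{eqn:contact vector field}) and (\ref{eqn:key equality}). Together these reconstruct $B(x;j)$ and produce the factor $1/\sqrt{\det B(x;j)}$. Combining with the Haar density $D_{G/T}$ at the identity coset of $G/T$ (\S\ref{sctn:local coordinates G/T}), the Weyl denominator $(e^{-\imath\vartheta_j} - e^{\imath\vartheta_j}) = -2\imath\sin(\vartheta_j)$, the constant phase $e^{\imath k\nu\vartheta_j}$, a factor of $2$ from the second critical locus at $(wT,\vartheta_j)$, and the leading FIO symbol $(ku_0/\pi)^d = (k\nu/(2\pi\lambda(m_x)))^d$, the total contribution of $g_j$ is proportional to $\imath\sin(\vartheta_j)\,e^{\imath k\nu\vartheta_j}/\sqrt{\det B(x;j)}$. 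A parallel computation for $g_j^{-1}$, whose $B$-matrix satisfies $B(x;\,j+a_x) = \overline{B(x;j)}$, gives the complex conjugate, and the pair combines into the stated $\Re$-expression.

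The main technical obstacle I anticipate is the precise bookkeeping of signs and normalizations in the imaginary shift $4\imath\sin(2\vartheta_j)\lambda(m_x)\,I_2$: although its source is clearly the $|z|^2\,\beta$ BCH correction, transporting it through the stationary phase in $(u,\tau)$ and through the angular HLC contribution requires careful use of (\ref{eqn:contact vector field}), (\ref{eqn:key equality}), and a correctly normalized ad-invariant inner product on $\mathfrak{g}$. The subleading polynomials $P_{jl}(\nu; m_x)$ follow by iterated application of the stationary-phase transport operator exactly as in the proof of Lemma \ref{lem:J' expansion} and Claims \ref{claim:first case}--\ref{claim:second case}; their $\mathcal{C}^\infty$-dependence on $m_x$ along the strata defined by $|G_x|$ is inherited from the smoothness of $\lambda(m_x)$, $h_{m_x}$, $\vartheta_j$, and $B(x;j)$ under the local-freeness assumption.
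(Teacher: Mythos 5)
Your overall strategy --- localize near each $g_j$, introduce local coordinates on $G/T$ (via the exponential $z\mapsto E(A(z))$) and on $T$ (via $\theta\mapsto t_jE(\theta\beta)$), expand with BCH, rescale by $\sqrt{k}$, apply stationary phase in $(u,\theta)$ and then a Gaussian integral in $z$, and pair the contributions of $g_j$ with $g_j^{-1}$ to reconstruct the real part --- is the same as the paper's. Your identification of the imaginary shift in $B(x;j)$ from the second-order BCH term $\gamma_{j2}(z,0)=-|z|^2\sin(2\vartheta_j)\beta$ matches the paper's (\ref{eqn:gamma z0}) and (\ref{eqn:oscillatory phse exponent}).

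There is, however, a substantive gap in your treatment of the two preimages of $g_j$ under the conjugation map $\rho\colon G/T\times T\to G$, $(gT,t)\mapsto gtg^{-1}$. You assert that the two preimages, namely $(h_{m_x}T,t_j)$ and $(k_{m_x}T,t_j^{-1})$, ``yield equal contributions by the $t\leftrightarrow t^{-1}$ symmetry already exploited in \S\ref{sctn:weylint&char}'' and then include ``a factor of $2$ from the second critical locus.'' But that symmetry is \emph{already spent}: once (\ref{eqn:equiv_projector2}) has been derived by collapsing $I_+-I_-=2I_+$ and reducing the character to $t_1^{-\nu}$, the two preimages are no longer equivalent. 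In the paper's bookkeeping the second preimage $(k_{m_x}T,t_j^{-1})$ contributes only $O(k^{-\infty})$, because replacing $h_{m_x}$ by $k_{m_x}=h_{m_x}\begin{pmatrix}0&-1\\1&0\end{pmatrix}$ flips the sign in (\ref{eqn:phase manipulation}) and produces the phase $\Psi_x'(u,\theta)=-\theta\,[2\lambda(m_x)u+\nu]$, whose $\theta$-derivative is $\le-\nu$ on the domain of integration, so integration by parts kills it. Multiplying by $2$ for the second locus while simultaneously summing over $g_j$ and $g_j^{-1}$ therefore overcounts. If you instead intended to work with the uncollapsed form (\ref{eqn:equiv_projector1}) (retaining the $\mu/2$ prefactor and the full character $t_1^{-\nu}-t_1^{\nu}$), each preimage picks up exactly one stationary term, and the two equal contributions times the $1/2$ give the correct constant; but then you must keep track of \emph{which} of the two character terms is stationary at each preimage, which your sketch does not do.

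A minor but telling symptom of the same sloppiness: the constant phase extracted by the Weyl character at $t_j$ is $e^{-\imath k\nu\vartheta_j}$, not $e^{+\imath k\nu\vartheta_j}$, and the Weyl denominator factor is $e^{\imath\vartheta_j}-e^{-\imath\vartheta_j}=2\imath\sin(\vartheta_j)$, not $-2\imath\sin(\vartheta_j)$; these signs must be fixed consistently for the pairing $g_j\leftrightarrow g_j^{-1}$ to produce the $\Re$ in the stated normalization.
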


\begin{proof}
 [Proof of Proposition \ref{prop:g_jnotinZx}]
Let $\rho:G/T \times T\rightarrow G$, $(g\,T,t)\mapsto g\,t\,g^{-1}$; each $g_j\in G_x\setminus Z_x$, being a regular element of
$G$, is a regular value of $\rho$. If $t_j$ is as in (\ref{eqn:defn di t_j}),
then
\begin{equation}
 \label{eqn:inverse imagepj}
\rho^{-1} (g_j) =\left\{(h_{m_x}\,T, t_j),\, (k_{m_x}\,T,t_j^{-1})\right\},\quad 
k_{m_x}:= h_{m_x} \,
\begin{pmatrix}
 0&-1\\
1&0
\end{pmatrix}.
\end{equation}
Let $E:\xi\in \mathfrak{g}\mapsto e^\xi\in G$ be the exponential map.  
We shall write the general $t\in T$ in exponential form as
$$
t=
\begin{pmatrix}
 e^{\imath\vartheta} & 0 \\
0 & e^{-\imath\vartheta} 
\end{pmatrix} =
E(\vartheta\,\beta),
$$
where $\beta$ is as in Remark \ref{rem:positive eigenvalue}.

By the Weyl integration and character formulae, we have
\begin{eqnarray}
 \label{eqn:jthsummand}
%\lefteqn{ 
\Pi_{k\,\boldsymbol{\nu}}(x,x)_j  
% }\\
%&=&k\,\nu \cdot \int_G\,\mathrm{d}V_G(g) \,\left[\varrho\left(g\,g_j^{-1}\right)\cdot
%\overline{\chi_{k\,\boldsymbol{\nu}}(g)}\,
%\Pi \left(\widetilde{\mu}_{g^{-1}} (x) ,x\right)\right]\nonumber \\
%& = & k\,\nu \cdot \int_{G/T}\,\mathrm{d}V_{G/T}(g\,T)\,\int_T\mathrm{d}V_T(t)\,
%\left[\varrho_j\left(g\right)\cdot
%t^{-k\nu}\,\nonumber
%\Pi \left(\widetilde{\mu}_{g t^{-1}g^{-1}} (x) ,x\right)\,\left(t-t^{-1}\right)\right]\nonumber\\
&=& \frac{k\,\nu}{2\pi} \cdot \int_{G/T}\,\mathrm{d}V_{G/T}(g\,T)\,\int_{-\pi}^{\pi}\mathrm{d}\vartheta\\
&&\left[\varrho_j\left(g\,e^{\vartheta \beta}\,g^{-1}\right)\cdot
e^{-\imath k \nu\cdot \vartheta}\,
\Pi \left(\widetilde{\mu}_{g e^{- \vartheta \beta}g^{-1}} (x) ,x\right)\,
\left(e^{\imath\,\vartheta}-e^{-\imath\,\vartheta}\right)\right].\nonumber
\end{eqnarray}

Since $\varrho_j\circ \rho:\left(g\,T, e^{\imath\,\vartheta}\right)\mapsto \varrho_j\left(g\,e^{\vartheta \beta}\,g^{-1}\right)$
is supported in a small open neighborhood of the pair (\ref{eqn:inverse imagepj}),
we can split (\ref{eqn:jthsummand}) as 
\begin{equation}
 \label{eqn:jthsummand12}
\Pi_{k\,\boldsymbol{\nu}}(x,x)_j = \Pi_{k\,\boldsymbol{\nu}}(x,x)_{j1} + \Pi_{k\,\boldsymbol{\nu}}(x,x)_{j2},
\end{equation}
where in $\Pi_{k\,\boldsymbol{\nu}}(x,x)_{j1}$ (respectively, $\Pi_{k\,\boldsymbol{\nu}}(x,x)_{j2}$)
integration is over a small neighborhood of 
$(h_{m_x}\,T, t_j)$ (respectively, $(k_{m_x}\,T,t_j^{-1})$).

Let us first consider each $\Pi_{k\,\boldsymbol{\nu}}(x,x)_{j1}$.
To this end, we shall introduce local coordinates on $G/T$ and on $T$.

First, for some suitably small $\delta>0$ and $z\in D(0,\delta)\subset \mathbb{C}$, we set
\begin{equation}
 \label{eqn:defn of h}
h(z):= h_{m_x}\,E\big(A(z)\big),
\end{equation}
where $A(z)$ is as in Definition \ref{defn:definition of Bxj};
then 
\begin{equation}
 \label{eqn:local coordinates G/T}
z\in D(0,\delta)\mapsto h(z)\,T\in G/T
\end{equation}
is a system of local coordinates on 
$G/T$ centered at $h_{m_x}\,T$. The Haar measure on $G/T$, expressed in the $z$ coordinates,
is $\mathcal{V}_{G/T}(z)\,\mathrm{d}V_{\mathbb{C}}(z)$, for an appropriate smooth function 
on $\mathcal{V}_{G/T}$. 
The proof of the following Lemma will be omitted.

\begin{lem}
\label{lem:DG/T}
Let us set
$D_{G/T}=\mathcal{V}_{G/T}(0)$. Then 
$D_{G/T}=2\pi/V_3$, where $V_3$ is the surface area of $S^3$.
\end{lem}

Next, as a system of local coordinates on $T$ centered at $t_j$ we shall adopt
$
\theta\in (-\delta,\delta)\mapsto t_j\,E(\theta\,\beta)\in T
$. 
Furthermore, since $\left(\widetilde{\mu}_{g e^{- \vartheta \beta}g^{-1}} (x) ,x\right)$ is in a small neighborhood
of the diagonal in $X\times X$, we may replace $\Pi$ by its representation as an FIO. 
After performing the rescaling $u\mapsto ku$, and recalling Proposition \ref{prop:compact support u},
we obtain
\begin{eqnarray}
 \label{eqn:jthsummand1}
\lefteqn{ \Pi_{k\,\boldsymbol{\nu}}(x,x)_{j1}   }\\
%&=&k\,\nu \cdot \int_G\,\mathrm{d}V_G(g) \,\left[\varrho\left(g\,g_j^{-1}\right)\cdot
%\overline{\chi_{k\,\boldsymbol{\nu}}(g)}\,
%\Pi \left(\widetilde{\mu}_{g^{-1}} (x) ,x\right)\right]\nonumber \\
%& = & k\,\nu \cdot \int_{G/T}\,\mathrm{d}V_{G/T}(g\,T)\,\int_T\mathrm{d}V_T(t)\,
%\left[\varrho_j\left(g\right)\cdot
%t^{-k\nu}\,
%\Pi \left(\widetilde{\mu}_{g t^{-1}g^{-1}} (x) ,x\right)\,\left(t-t^{-1}\right)\right]\nonumber\\
&\sim& \frac{k^2\,\nu}{2\pi} \cdot 
e^{-\imath k \nu\cdot \vartheta_j}\cdot \int_{D(0,\delta)}\,\mathrm{d}V_{\mathbb{C}}(z)\,\int_{-\delta}^{\delta}\mathrm{d}\theta
\,\int_0^{+\infty}\mathrm{d}u\nonumber\\
&&\left[
%\varrho_j\left(g\,e^{\imath\vartheta B}\,g^{-1}\right)\cdot
e^{\imath\,k\,\left[u \,\psi\left(\widetilde{\mu}_{h(z) E(- \theta \beta) t_j^{-1} h(z)^{-1}} (x) ,x\right)-\nu\cdot \theta\right]}
\cdot \mathcal{V}_{G/T}(z)
\right.\nonumber\\
&&\left. \cdot \rho(u)\cdot 
s_{j1} \left(\widetilde{\mu}_{h(z) E(- \theta \beta) t_j^{-1} h(z)^{-1}} (x) ,x,k\,u\right)\cdot
\left(e^{\imath\,(\vartheta_j+\theta)}-e^{-\imath\,(\vartheta_j+\theta)}\right)\right].\nonumber
\end{eqnarray}
Here, $\mathrm{d}V_{\mathbb{C}}(z)$ is the Lebesgue measure on $\mathbb{C}\cong \mathbb{R}^2$,
and $s_{j1}$ denotes the usual amplitude of the representation of $\Pi$ as an FIO, with the above cut-offs incorporated.

In order to proceed, we need to express the phase more explicitly. 
We have
\begin{eqnarray}
 \label{eqn:exponential orm j1}
\lefteqn{h(z) E(- \theta \beta) t_j^{-1} h(z)^{-1}} \\
%&=&h_{m_x}\,E\big(A(z)\big)\, E(- \imath\vartheta B) t_j^{-1} \, E\big(-A(z)\big)\,t_j\,h_{m_x}^{-1}\,  h_{m_x}\,t_j^{-1} \, h_{m_x}^{-1}\nonumber \\
%&=&C_{h_{m_x}}\Big(E\big(A(z)\big)\, E(- \imath\vartheta B) t_j^{-1} \, E\big(-A(z)\big)\,t_j\Big)\,g_j^{-1} \nonumber\\
&=&C_{h_{m_x}}\Big(E\big(A(z)\big)\, E(- \theta \beta) E\big(-\mathrm{Ad}_{t_j^{-1}}\big(  A(z)\big)\big)\Big)\,g_j^{-1} \nonumber\\
&=& E \Big (-\mathrm{Ad}_{h_{m_x}}\big(\gamma_j (z,\theta)\big)\Big)\,g_j^{-1},\nonumber
\end{eqnarray}
where (by use of the Baker-Campbell-Hausdorff formula)
$$
\gamma_j (z,\theta)= \gamma_{j1} (z,\theta) + \gamma_{j2}(z,\theta)+ R_3(z,\theta),
$$
with 
\begin{eqnarray}
 \label{eqn:defn di gamma1e2}
\gamma_{j1} (z,\theta) & := & 
%\imath\theta B 
\theta\,\beta+ \big(\mathrm{Ad}_{t_j^{-1}} - \mathrm{id}_{\mathfrak{g}}\big)\big( A(z)\big),\\
\gamma_{j2} (z,\theta) & := & - \frac{1}{ 2 } \,\big[\theta \, \beta, A(z)+ \mathrm{Ad}_{t_j^{-1}}\big(  A(z)\big) \big]\nonumber\\
&&+\frac{1}{2} \,\big[ A(z) , \mathrm{Ad}_{t_j^{-1}}\big(A(z)\big)\big],\nonumber
\end{eqnarray}
while $R_j$ denotes a generic $\mathcal{C}^\infty$ function vanishing to $j$-th order at the origin. 
Note that $\gamma_j$ is homogeneous of degree $j$ in $\big(\Re(z),\Im(z),\theta)$,

By Corollary 2.2 of
\cite{pao-IJM}, we obtain in HLC's 
\begin{eqnarray}
\label{eqn:muxzthetaHLCsj1}
% \lefteqn{
\widetilde{\mu}_{h(z) E(- \theta \beta) t_j^{-1} h(z)^{-1}} (x) 
& = & \widetilde{\mu}_{E  \big(-\mathrm{Ad}_{h_{m_x}}\big(\gamma_j (z,\theta)\big)\big)} (x)    \nonumber\\
%} \\
%& = & x+ \Big( \Big\langle \Phi_G (m_x), \mathrm{Ad}_{h_{m_x}}\big(\gamma (z,\theta)\big)\Big\rangle + R_3(z,\theta),
%-\mathrm{Ad}_{h_{m_x}}\big(\gamma (z,\theta)\big)_M (m) +R_2(z,\theta)\Big)\\
&=& x+\Big (\Theta (z,\theta), V(\theta,z)\Big),
\end{eqnarray}
where
\begin{eqnarray}
 \label{eqn:defn di Theta V:=}
\Theta (z,\theta)& := & \Big\langle \Phi_G (m_x), \mathrm{Ad}_{h_{m_x}}\big(\gamma_j (z,\theta)\big)\Big\rangle + R_3(z,\theta)\\
V(\theta,z) & := & -\mathrm{Ad}_{h_{m_x}}\big(\gamma_j (z,\theta)\big)_M (m) +R_2(z,\theta).\nonumber
\end{eqnarray}

By the discussion in \S 3 of \cite{sz}, we conclude that 
\begin{eqnarray}
 \label{eqn:psi expanded j1}
\lefteqn{ 
u \,\psi\left(\widetilde{\mu}_{h(z) E(- \vartheta \beta) t_j^{-1} h(z)^{-1}} (x) ,x\right)-\nu\cdot \theta
} \nonumber\\
& = &u \,\psi\left(x+\Big (\Theta (z,\theta), V(\theta,z)\Big) ,x\right)-\nu\cdot \theta \nonumber\\
&=& \imath\, u \cdot \left[ 1- e^{\imath\,\Theta (z,\theta)}\right] 
+\frac{\imath\,u }{2}\cdot \big\|V(\theta,z)\big\|^2+u\,R_3(z,\theta)\nonumber	\\
&=& u\,\Theta(z,\theta) + \frac{\imath \, u}{2} \cdot \left[\Theta(z,\theta)^2+\big\|V(\theta,z)\big\|^2\right]
+ u\,R_3(z,\theta).
\end{eqnarray}

Let us choose $C>0$, $\epsilon \in (0,1/6)$. Since $p$ is a local diffeomorphism at $(h_{m_x}\,T, t_j)$
and $\widetilde{\mu}$ is locally free at $x$, the contribution to the asymptotics of (\ref{eqn:jthsummand1})
of the locus where $\|(z,\theta)\|\ge C\, k^{\epsilon-1/2}$ is $O\left(k^{-\infty}\right)$.
Adopting the rescaling $z\mapsto z/\sqrt{k}$, $\theta\mapsto \theta/\sqrt{k}$ we can rewrite (\ref{eqn:jthsummand1})
in the following form:
\begin{eqnarray}
 \label{eqn:jthsummand1rescaled}
%\lefteqn{ 
\Pi_{k\,\boldsymbol{\nu}}(x,x)_{j1}   
%}\\
%&=&k\,\nu \cdot \int_G\,\mathrm{d}V_G(g) \,\left[\varrho\left(g\,g_j^{-1}\right)\cdot
%\overline{\chi_{k\,\boldsymbol{\nu}}(g)}\,
%\Pi \left(\widetilde{\mu}_{g^{-1}} (x) ,x\right)\right]\nonumber \\
%& = & k\,\nu \cdot \int_{G/T}\,\mathrm{d}V_{G/T}(g\,T)\,\int_T\mathrm{d}V_T(t)\,
%\left[\varrho_j\left(g\right)\cdot
%t^{-k\nu}\,
%\Pi \left(\widetilde{\mu}_{g t^{-1}g^{-1}} (x) ,x\right)\,\left(t-t^{-1}\right)\right]\nonumber\\
%&\sim& \frac{k^{1/2}\,\nu}{2\pi} \cdot 
%e^{-\imath k \nu\cdot \vartheta_j}\\
%&&\cdot \int_{\mathbb{C}}\,\mathrm{d}V_{\mathbb{C}}(z)\,\int_{-\infty}^{+\infty}\mathrm{d}\theta
%\,\int_0^{+\infty}\mathrm{d}u\left[
%\varrho_j\left(g\,e^{\imath\vartheta B}\,g^{-1}\right)\cdot
%e^{\imath\,k\,\Psi_k(x;u,\theta,z)}\cdot
%A_{jk}(x;u,\theta,z)\right]\nonumber\\
&\sim& \frac{k^{1/2}\,\nu}{2\pi} \cdot 
e^{-\imath k \nu\cdot \vartheta_j}\cdot \int_{\mathbb{C}}\,\mathrm{d}V_{\mathbb{C}}(z)\,\big[I_k(x;z)\big],
\end{eqnarray}
where
\begin{equation}
 \label{eqn:Ikztheta}
I_{jk}(x;z):=\int_{-\infty}^{+\infty}\mathrm{d}\theta
\,\int_0^{+\infty}\mathrm{d}u\left[
%\varrho_j\left(g\,e^{\imath\vartheta B}\,g^{-1}\right)\cdot
e^{\imath\,k\,\Psi_k(x;u,\theta,z)}\cdot
A_{jk}(x;u,\theta,z)\right];
\end{equation}
in (\ref{eqn:Ikztheta}) we have set
\begin{eqnarray}
 \label{eqn:defn di Psik1}
\lefteqn{\imath\,k\,\Psi_k(x;u,\theta,z) := 
\imath\,\sqrt{k}\, \left[ u\cdot \Big\langle \Phi_G (m_x), \mathrm{Ad}_{h_{m_x}}\big(\gamma_{j1} (z,\theta)\big)\Big\rangle
-\theta\cdot \nu\right]}\nonumber\\
&&+\imath\,u\cdot \Big\langle \Phi_G (m_x), \mathrm{Ad}_{h_{m_x}}\big(\gamma_{j2} (z,\theta)\big)\Big\rangle
-\frac{u}{2}\cdot \big\|\mathrm{Ad}_{h_{m_x}}\big(\gamma_{j1} (z,\theta)\big)_X(x)\big\|^2 \nonumber\\
&&+k\, R_3\left(\frac{z}{\sqrt{k}}, \frac{\theta}{\sqrt{k}}\right),
\end{eqnarray}
\begin{eqnarray}
 \label{eqn:defn di Ak}
%\lefteqn{
A_{jk}(x;u,\theta,z)
%}\\
& := &s_{j1} \left(\widetilde{\mu}_{h(z/\sqrt{k}) E(- \vartheta \beta/\sqrt{k}) t_j^{-1} h(z/\sqrt{k})^{-1}} (x) ,x,k\,u\right)
\cdot \mathcal{V}_{G/T}\left(\frac{z}{\sqrt{k}}\right)\nonumber\\
&&\cdot \varrho \left(k^{-\epsilon}\, (z,\theta)\right) \cdot
\left(e^{\imath\,(\vartheta_j+\theta/\sqrt{k})}-e^{-\imath\,(\vartheta_j+\theta/\sqrt{k})}\right),
\end{eqnarray}
with $\varrho$ an appropriate bump function. Integration in $(z,\theta)$ in (\ref{eqn:jthsummand1rescaled}) 
is over a ball of radius 
$O\left(k^\epsilon\right)$ centered at the origin.

Let us derive an asymptotic expansion for (\ref{eqn:Ikztheta}).

\begin{prop}
 \label{prop:asymptotic expansion for Ikztheta}
As $k\rightarrow +\infty$, we have
\begin{eqnarray}
 \label{eqn:Ikexpanded}
I_k(x;z)&\sim& \left(\frac{k\,u_0}{\pi}\right)^d\cdot \frac{\pi}{\sqrt{k}}\cdot \frac{2\,\imath \,\sin(\vartheta_j)}{\lambda (m_x)}
\cdot e^{-\frac{u_0}{2}\,Z^t\,A(x;j)\,Z}\nonumber\\
&&\cdot \left[1+\sum_{l\ge 1} k^{-l/2}\,R_{jl}(m_x;Z)\right],
\end{eqnarray}
where $R_{jl}(m_x;Z)$ is polynomial in $Z$ of degree $\le 3l$ and parity $(-1)^l$. 
\end{prop}

\begin{proof}
[Proof of Proposition \ref{prop:asymptotic expansion for Ikztheta}]

The second summand in $\gamma_1(z,\theta)$ in (\ref{eqn:defn di gamma1e2}) is anti-diagonal. Therefore,
\begin{eqnarray}
 \label{eqn:phase manipulation}
\lefteqn{\Big\langle \Phi_G (m_x), \mathrm{Ad}_{h_{m_x}}\big(\gamma_{j1} (z,\theta)\Big\rangle}\\
&=&\Big\langle \mathrm{Ad}_{h_{m_x}^{-1}}\big(\Phi_G (m_x)\big), \gamma_{j1} (z,\theta)\Big\rangle = 
\Big\langle  \lambda (m_x)\, \beta, \theta\,\beta\Big\rangle =2\,\lambda(m_x) \,\theta.
                                          \nonumber
\end{eqnarray}

Thus we have
\begin{equation}
 \label{eqn:defn Ik(z)}
I_{jk}(x;z)=\int_{-\infty}^{+\infty}\mathrm{d}\theta
\,\int_0^{+\infty}\mathrm{d}u\left[
%\varrho_j\left(g\,e^{\imath\vartheta B}\,g^{-1}\right)\cdot
e^{\imath\,\sqrt{k}\,\Psi_x(u,\theta)}\cdot
\mathcal{A}_{jk}(x;u,\theta,z)\right],
\end{equation}
where now
\begin{eqnarray}
 \label{eqn:defn di PSI e A}
\Psi_x(u,\theta) &:=&\theta\cdot \big[ 2\,\lambda(m_x) \cdot u- \nu \big]\\
\mathcal{A}_{jk}(x;u,\theta,z) &:= &   e^{\mathcal{E}(u,\theta,z)}
\cdot e^{k\, R_3\left(\frac{z}{\sqrt{k}}, \frac{\theta}{\sqrt{k}}\right)}\cdot  A_{jk}(x;u,\theta,z),\nonumber
\end{eqnarray}
with
\begin{eqnarray}
\label{eqn:oscillatory phse exponent}
\mathcal{E}(u,\theta,z)& :=& \imath\,u\cdot \Big\langle \Phi_G (m_x), \mathrm{Ad}_{h_{m_x}}\big(\gamma_2 (z,\theta)\big)\Big\rangle\\
&&-\frac{u}{2}\cdot \big\|\mathrm{Ad}_{h_{m_x}}\big(\gamma_1 (z,\theta)\big)_X(x)\big\|^2 .\nonumber
\end{eqnarray}
It follows from (\ref{eqn:defn di gamma1e2}) and (\ref{eqn:oscillatory phse exponent}),
that $\mathcal{E}(u,\theta,z)$ is homogeneous of degree $2$ in $\big(\Re(z), \Im(z), \theta \big)$;
furthermore, since $\widetilde{\mu}$ is locally free at $x$, 
on the support of the integrand we have
\begin{eqnarray}
\Re \big(\mathcal{E}_k(z,\theta)\big)
& \le &
- D' \cdot \left( |z|^2 + |\theta|^2\right)
\end{eqnarray}
for some positive constant $D>0$.

Noting that $\sin(\vartheta_j) \neq 0$ as $g_j\not\in Z_x$, we have
\begin{eqnarray}
\label{eqn:Delta expanded}
 %\lefteqn{ 
e^{\imath\,(\vartheta_j+\theta/\sqrt{k})}-e^{-\imath\,(\vartheta_j+\theta/\sqrt{k})} % }\\
= 2\,\imath \,\sin(\vartheta_j) \cdot \left[1+\sum_{l\ge 1} k^{-l/2}\,a_{jl}\cdot \theta^l\right]%\nonumber
\end{eqnarray}
for certain $a_{jl}\in \mathbb{R}$.
Similarly, 
\begin{eqnarray}
 \label{eqn:kR_3 expanded}
%\lefteqn{
\mathcal{V}_{G/T}(z)\cdot e^{k\, R_3\left(\frac{z}{\sqrt{k}}, \frac{\theta}{\sqrt{k}}\right)}%}\\
%&=&1+\sum_{r\ge 1}\,\sum_{j\ge 3} k^{r\cdot(1-j/2)}\cdot P_{r,j}(z,\theta),
=D_{G/T}+\sum_{r\ge 1}\,k^{-r/2}\cdot P_{r}(z,\theta)
\end{eqnarray}
where $P_{r}(z,\theta)$ is a polynomialin $(\Re(z),\Im(z),\theta)$ of degree $\le 3\,r$ and
parity $(-1)^r$ (possibly also depending on $j$).
Pairing (\ref{eqn:Delta expanded}) and (\ref{eqn:kR_3 expanded}) we conclude that%, working in HLC's centered at $x$,
\begin{eqnarray}
 \label{eqn:Ak expanded}
\mathcal{A}_{jk}(x;u,\theta,z) &\sim&  2\,\imath \,\sin(\vartheta_j) \cdot \left(\frac{k\,u}{\pi}\right)^d\cdot 
e^{\mathcal{E}(u,\theta,z)}\nonumber\\
&&\cdot \left[1+\sum_{r\ge 1}k^{-r/2}\,P_{jr}(z,\theta) \right],
\end{eqnarray}
where $P_{jr}(z,\theta)$ is again a polynomial in $(\Re(z),\Im(z),\theta)$ of degree $\le 3\,r$ and
parity $(-1)^r$.

%Let us rewrite (\ref{eqn:jthsummand1rescaled}) in the following form:
%\begin{eqnarray}
% \label{eqn:jthsummand1rescaled1}
%\lefteqn{ 
%\Pi_{k\,\boldsymbol{\nu}}(x,x)_{j1}   
%}\\
%&=&k\,\nu \cdot \int_G\,\mathrm{d}V_G(g) \,\left[\varrho\left(g\,g_j^{-1}\right)\cdot
%\overline{\chi_{k\,\boldsymbol{\nu}}(g)}\,
%\Pi \left(\widetilde{\mu}_{g^{-1}} (x) ,x\right)\right]\nonumber \\
%& = & k\,\nu \cdot \int_{G/T}\,\mathrm{d}V_{G/T}(g\,T)\,\int_T\mathrm{d}V_T(t)\,
%\left[\varrho_j\left(g\right)\cdot
%t^{-k\nu}\,
%\Pi \left(\widetilde{\mu}_{g t^{-1}g^{-1}} (x) ,x\right)\,\left(t-t^{-1}\right)\right]\nonumber\\
%\sim \frac{k^{1/2}\,\nu}{2\pi} \cdot 
%e^{-\imath k \nu\cdot \vartheta_j}\cdot \int_{\mathbb{C}}\,\mathrm{d}V_{\mathbb{C}}(z)\,\big[I_k(x;z)\big].
%\end{eqnarray}

The following is straighforward.

\begin{lem}
 \label{lem:critical point Psi}
$\Psi_x$ has a unique critical point, which is non-degenerate and 
given by $(u_0,\theta_0)=\big(\nu/\big(2\,\lambda(m_x)\big), 0\big)$;
we have $\Psi_x(u_0,\theta_0)=0$.
The Hessian matrix has determinant $-4\,\lambda(m_x)^2$ and vanishing signature. 
\end{lem}

We can apply the Stationary Phase Lemma to determine the asymptotic expansion of
(\ref{eqn:defn Ik(z)}). In view of (\ref{eqn:defn di gamma1e2}), by a few computations we get 
\begin{eqnarray}
 \label{eqn:gamma z0}
\gamma_{j1}(z,0)&=& 
\imath\,
\begin{pmatrix}
 0 & \left(e^{-2\imath\,\vartheta_j}-1\right)\cdot z\\
\left(e^{2\imath\,\vartheta_j}-1\right)\cdot \overline{z}&0
\end{pmatrix},\\
\gamma_{j2}(z,0) &=& - |z|^2\cdot \sin(2\,\vartheta_j) \,\beta.\nonumber
\end{eqnarray}
If $z=a+\imath\,b$ with $a,b\in \mathbb{R}$, let 
$Z=
\begin{pmatrix}
 a&
b
\end{pmatrix}^t\in \mathbb{R}^2
$
be the corresponding vector; thus $|z|=\|Z\|$. Then 
$$
\big\|\mathrm{Ad}_{h_{m_x}}\big(\gamma_{j1} (z,0)\big)_X(x)\big\|^2
= \frac{1}{2}\cdot  Z^t\,C(x;j)\,Z
$$
where $C(x;j)$ is as in Definition \ref{defn:definition of Bxj}. From (\ref{eqn:oscillatory phse exponent})
and (\ref{eqn:gamma z0}) we conclude 
\begin{eqnarray*}
%\lefteqn{
\mathcal{E}(u_0,0,z)
%} \\
%&=& -2\imath\,u_0\cdot \sin(2\vartheta_j)\cdot \lambda(m_x)\cdot \|Z\|^2
%-\frac{u_0}{2}\cdot 
%Z^t\,C(x;j)\,Z
%+k\, R_3\left(\frac{z}{\sqrt{k}}\right)\nonumber\\
&=&-\frac{u_0}{2}\,Z^t\,B(x;j)\,Z,
%+k\, R_3\left(\frac{z}{\sqrt{k}}\right),
\end{eqnarray*}
where 
$B(x;j)$ is also as in Definition \ref{defn:definition of Bxj}.

Finally, by (\ref{eqn:Ak expanded}) the general term of the asymptotic expansion is the evaluation
at the critical point of an expression of the form
\begin{equation}
 \label{eqn:general term asymptotic}
C_{r,l}\cdot k^{d-s/2}\,\left(\frac{\partial^2}{\partial \theta\partial u}\right)^s
\left(k^{-r/2}\,P_{jr}(z,\theta)\, e^{\mathcal{E}(u,\theta,z)}\right),
\end{equation}
for some constant $C_{r,l}\in \mathbb{C}$. Writing $\mathcal{E}(u,\theta,z)=u\cdot (H_{2,0}+H_{1,1}+H_{0,2})$,
where $H_{a,b}$ is bihomogeneous of bidegree $(a,b)$ in $(\theta,Z)$, as in Claims \ref{claim:first case} and \ref{claim:second case}
we conclude by an inductive argument that
(\ref{eqn:general term asymptotic}) has the form $k^{d-(s+r)/2}\,Q_{r,s}(\theta,Z)\, e^{\mathcal{E}(u,\theta,z)}$,
where $Q_{r,s}$ is a polynomial of degree $\le 3\,(r+s)$, and parity $(-1)^{r+s}$. The proof of Lemma 
\ref{lem:critical point Psi} is complete.
\end{proof}

%Applying the Stationary Phase Lemma, we obtain the following.

Let us insert (\ref{eqn:Ikexpanded}) 
in (\ref{eqn:jthsummand1rescaled}), and remark that by parity odd polynomials do not contribute to the integral
over $\mathbb{C}$. Hence after integration the half-integer powers of $k$ drop out and we obtain  
\begin{eqnarray}
 \label{eqn:jthsummand1rescaled2}
%\lefteqn{ 
\Pi_{k\,\boldsymbol{\nu}}(x,x)_{j1}   
%}\\
%&=&k\,\nu \cdot \int_G\,\mathrm{d}V_G(g) \,\left[\varrho\left(g\,g_j^{-1}\right)\cdot
%\overline{\chi_{k\,\boldsymbol{\nu}}(g)}\,
%\Pi \left(\widetilde{\mu}_{g^{-1}} (x) ,x\right)\right]\nonumber \\
%& = & k\,\nu \cdot \int_{G/T}\,\mathrm{d}V_{G/T}(g\,T)\,\int_T\mathrm{d}V_T(t)\,
%\left[\varrho_j\left(g\right)\cdot
%t^{-k\nu}\,
%\Pi \left(\widetilde{\mu}_{g t^{-1}g^{-1}} (x) ,x\right)\,\left(t-t^{-1}\right)\right]\nonumber\\
&\sim& \nu \cdot 
e^{-\imath k \nu\cdot \vartheta_j}\cdot \left(\frac{k\,u_0}{\pi}\right)^d\cdot  
\frac{\imath \,\sin(\vartheta_j)}{\lambda (m_x)}
\cdot \frac{2\pi}{u_0\cdot\sqrt{\det \big(B(x;j)\big)}}\nonumber\\
&&\cdot \left[D_{G/T}+\sum_{l\ge 1} k^{-l}\,P_{jl}(m_x)\right]\nonumber\\
&=&4\pi\cdot \frac{\imath\,\sin(\vartheta_j)\cdot e^{-\imath k \nu\cdot \vartheta_j}}{\sqrt{\det \big(B(x;j)\big)}}
\cdot \left(\frac{\nu\,k}{2\,\pi\cdot \lambda(m_x)}\right)^d\nonumber\\
&&\cdot \left[D_{G/T}+\sum_{l\ge 1} k^{-l}\,P_{jl}(m_x)\right].
\end{eqnarray}

Let us remark that $g_j\neq g_j^{-1}$ since $g_j\neq \pm I_2$; 
summing the contributions (\ref{eqn:jthsummand1rescaled2}) 
corresponding to $g_j$ and $g_{j+a_x}=g_j^{-1}$, we obtain 
\begin{eqnarray}
 \label{eqn:jthsummand1rescaledsummed}
%\lefteqn{ 
\Pi_{k\,\boldsymbol{\nu}}(x,x)_{j1} +  \Pi_{k\,\boldsymbol{\nu}}(x,x)_{j'1}  
%}\nonumber\\
%&=&k\,\nu \cdot \int_G\,\mathrm{d}V_G(g) \,\left[\varrho\left(g\,g_j^{-1}\right)\cdot
%\overline{\chi_{k\,\boldsymbol{\nu}}(g)}\,
%\Pi \left(\widetilde{\mu}_{g^{-1}} (x) ,x\right)\right]\nonumber \\
%& = & k\,\nu \cdot \int_{G/T}\,\mathrm{d}V_{G/T}(g\,T)\,\int_T\mathrm{d}V_T(t)\,
%\left[\varrho_j\left(g\right)\cdot
%t^{-k\nu}\,
%\Pi \left(\widetilde{\mu}_{g t^{-1}g^{-1}} (x) ,x\right)\,\left(t-t^{-1}\right)\right]\nonumber\\
&=&8\pi\cdot \Re\left(\frac{\imath\,\sin(\vartheta_j)\cdot e^{-\imath k \nu\cdot \vartheta_j}}{\sqrt{\det \big(B(x;j)\big)}}\right)
\cdot \left(\frac{\nu\,k}{2\,\pi\cdot \lambda(m_x)}\right)^d\nonumber\\
&&\cdot \left[D_{G/T}+\sum_{l\ge 1} k^{-l/2}\,R_{jl}(m_x)\right].
\end{eqnarray}

To deal with $\Pi_{k\,\boldsymbol{\nu}}(x,x)_{j2}$, in view of 
(\ref{eqn:inverse imagepj})
we need only go over the previous computations replacing $h_{m_x}$ with $k_{m_x}$, and $t_j$ with $t_j^{-1}$.  
In the analogue of (\ref{eqn:defn Ik(z)}), in place of the phase $\Psi_x$ in (\ref{eqn:defn di PSI e A}), we obtain 
$$
\Psi'_x(u,\theta) =-\theta\cdot \big[ 2\,\lambda(m_x) \cdot u+ \nu \big],
$$
so that $\partial_{\vartheta}\Psi'_x(u,\theta) =- \big[ 2\,\lambda(m_x) \cdot u+ \nu \big]\le -\nu$
on the domain of integration. Thus $\Pi_{k\,\boldsymbol{\nu}}(x,x)_{j2}=O\left(k^{-\infty}\right)$.

The proof of Proposition \ref{prop:g_jnotinZx} is complete.
\end{proof}

\end{proof}

\begin{proof}
 [Proof of Corollary \ref{cor:dimension estimate}]
The function $x\mapsto \Pi_{k\,\boldsymbol{\nu}}(x,x)$
is $S^1$-invariant, hence it may be interpreted as a
$\mathcal{C}^\infty$ function on $M$ (pulled back to $X$).
On the other hand, Theorems \ref{thm:G_xnotinZx} and \ref{thm:rescaled asymptotics}
imply that $x\mapsto (\pi/k)^d\cdot \Pi_{k\,\boldsymbol{\nu}}(x,x)$ is bounded,
and that, with the previous interpretation, it converges almost everywhere to the integrand on the right hand side of
(\ref{eqn:dimension estimate}). The claim follows by the dominated convergence Theorem, in view of the choice
of volume form on $X$.
\end{proof}

\end{document}